\newcolumntype{M}[1]{>{\centering\arraybackslash}m{#1}}
\pgfplotsset{compat=newest}
\pgfplotsset{width=7cm,compat=1.16} 
\pgfplotsset{compat=1.15}
\theoremstyle{definition}
\theoremstyle{plain}
\newtheorem{proposition}{Proposition}
\newtheorem{lemma}{Lemma}
\theoremstyle{plain}
\newtheorem{question}{Question}
\newtheorem{theorem}{Theorem}
\theoremstyle{remark}
\newtheorem*{remark2}{Remark}
\theoremstyle{remark}
\newtheorem*{remark3}{Acknowledgements}
\newcommand{\capac}{\operatorname{cap}}
\DeclareMathOperator*{\esssup}{ess\,sup}
\newcommand{\imag}{\operatorname{Im}}
\newcommand{\real}{\operatorname{Re}}
\title{Julia Sets of  Zorich maps}
\begin{document}
	\author{ATHANASIOS TSANTARIS}
	
	\maketitle
		\begin{abstract}
		The Julia set of the exponential family $E_{\kappa}:z\mapsto\kappa e^z$, $\kappa>0$ was shown to be the entire complex plane when $\kappa>1/e$ essentially by Misiurewicz. Later, Devaney and Krych showed that for $0<\kappa\leq1/e$ the Julia set is an uncountable union of pairwise disjoint simple curves tending to infinity.  Bergweiler generalized the result of Devaney and Krych for a three dimensional analogue of the exponential map called the Zorich map. We show that the Julia set of certain Zorich maps with  symmetry is the entire $\mathbb{R}^3$ generalizing Misiurewicz's result. Moreover, we show that the periodic points of the Zorich map are dense in $\mathbb{R}^3$ and that its escaping set is connected, generalizing a result of Rempe. We also generalize a theorem of Ghys, Sullivan and Goldberg on the measurable dynamics of the exponential.
	\end{abstract}
	\section{Introduction}
	\let\thefootnote\relax\footnote{2020 \textit{Mathematics Subject Classification}. Primary 37F10; Secondary 30C65, 30D05}

		In the study of the dynamics of complex analytic functions one of the most well studied and important families of functions is the exponential family $E_{\kappa}:z\mapsto\kappa e^z,$ $\kappa\in\mathbb{C}-\{0\}$. Perhaps the most fundamental fact about this family concerns its \textit{Julia set}. The Julia set  $\mathcal{J}(f)$ of an entire function $f$ is the set of all points in the complex plane where the family of iterates $f^n$ of $f$ is not normal. For $0<\kappa\leq1/e$, as was proven first by Devaney and Krych in \cite{Devaney1984}, the Julia set ${\mathcal{J}}(E_{\kappa})$ is a so called '\textit{Cantor bouquet}' which consists of uncountably many disjoint curves each of which has a finite endpoint and goes off to infinity. On the other hand, when $\kappa>1/e$  Misiurewicz in \cite{Misiu} proved that the Julia set ${\mathcal{J}}(E_{\lambda})$ equals the entire complex plane $\mathbb{C}$ (actually Misiurewicz only proved this for $\kappa=1$ but his proof can easily be adapted to cover the other cases as well, see \cite{Devaney2003}). For a different proof of the same fact see \cite{Shen2015}.  For all these facts and much more we refer to Devaney's survey \cite{Devaney2010} on exponential dynamics.
		
		In recent years there has been an increasing interest in the study of dynamics of quasiregular functions. Quasiregular functions are a higher dimensional generalization of holomorphic maps on the plane. We refer to \cite{Berg1} for a survey on the dynamics of such functions. As Bergweiler and Nicks have shown, in \cite{berg2013,B-Nicks}, there is a sensible definition for the Julia set for such functions which has many of the properties of the classical Julia set.  
		
		Moreover, in this higher dimensional setting, there is a whole family of maps that can be considered analogues of the exponential map called the \textit{Zorich maps} which are quasiregular and were first constructed by Zorich in \cite{Zorich}. Following \cite{Iwaniec2001} we describe the construction of the Zorich maps in three dimensions. Note that the construction can be done in arbitrary dimensions but we will confine ourselves in three dimensions for simplicity. First consider an $L$ bi-Lipschitz, sense-preserving map $\mathfrak{h}$ that maps the square \[Q:=\Big \{(x_1,x_2)\in\mathbb{R}^2:|x_1|\leq 1,|x_2|\leq 1\Big \}\] to the upper hemisphere \[\{(x_1,x_2,x_3)\in\mathbb{R}^3:x_1^2+x_2^2+x_3^2=1, x_3\geq 0\}.\] Then define $Z:Q\times\mathbb{R}\to\mathbb{R}^3$ as \[Z(x_1,x_2,x_3)=e^{x_3}\mathfrak{h}(x_1,x_2).\] The map $Z$ maps the square beam $Q\times \mathbb{R}$ to the upper half-space. By repeatedly reflecting now, across the sides of the square beam in the domain and the $x_1x_2$ plane in the range, we get a map $Z:\mathbb{R}^3\to\mathbb{R}^3$. Note that this map is doubly periodic meaning that $Z(x_1+4,x_2,x_3)=Z(x_1,x_2+4,x_3)=Z(x_1,x_2,x_3)$. Moreover, this map is not locally injective everywhere. The lines $x_1=2n+1, x_2=2m+1$, $n,m\in\mathbb{Z}$ belong to the \textit{branch set}, namely the set \[\mathcal{B}_{Z}:=\{x\in\mathbb{R}^3:Z \hspace{2mm}\text{is not locally homeomorphic at}\hspace{2mm}x\}.\] Also it can be shown that this map is quasiregular and has an essential singularity at infinity, just like the exponential map on the plane. Although we will not need this we call such quasiregular maps of \textit{transcendental type}. 
		
		We can also introduce a parameter $\nu>0$ and consider the family $Z_{\nu}=\nu Z$, where $Z$ is a Zorich map. This family can be considered as an analogue of the exponential family in higher dimensions (at least in the case where $\kappa>0$). Hence, it would be very interesting to know whether or not this family has a similar behaviour with the exponential in terms of dynamics. Indeed, Bergweiler in \cite{bergk} and Bergweiler and Nicks in \cite[Section 7]{B-Nicks} have proven that for small values of $\nu$ this family has as its Julia set uncountably many, pairwise disjoint curves. For those curves, Bergweiler in \cite{bergk} proved a counterpart to Karpinska's paradox (see \cite{Karpinska,Karpinska1999}) for the exponential map, namely the fact that the endpoints of those curves have Hausdorff dimension 3 while the curves minus the endpoints have Hausdorff dimension ~1. Moreover, Comd\"uhr in \cite{COMDUeHR2017} proved that those curves are smooth generalizing a result of Viana \cite{Silva1988} and in \cite{tsantaris2021} we studied the topology of those curves. It is also worth mentioning here that Cantor bouquets have been proven to exist for other generalized exponential functions, not necessarily quasiregular (see \cite{comduhr2019}).    Having said all that it seems quite reasonable to expect that for large values of $\nu$ the Julia set of the Zorich family would be the entire $\mathbb{R}^3$ just like in the exponential family where the Julia set is the entire complex plane. One aim of this paper is to prove that if we make some reasonable modifications to the map $\mathfrak{h}$ then this conjecture holds. 
		
		Let us now define the modified $\mathfrak{h}$ and state our main theorem. The first thing that we require is that our map $\mathfrak{h}(x_1,x_2)=(\mathfrak{h}_1(x_1,x_2),\mathfrak{h}_2(x_1,x_2),\mathfrak{h}_3(x_1,x_2))$ must satisfy $\mathfrak{h}_1(x_1,x_1)=\mathfrak{h}_2(x_1,x_1)$ and $\mathfrak{h}_1(x_1,-x_1)=-\mathfrak{h}_2(x_1,-x_1)$. This way the planes $x_1=x_2$ and $x_1=-x_2$ are invariant under the Zorich map we get. Note that this implies that $\mathfrak{h}(0,0)=(0,0,1)$. Second, we need to  scale things by a factor $\lambda>1$. To be more precise we define the function \[h(x_1,x_2)=\lambda \mathfrak{h}\left(\frac{1}{\lambda}(x_1,x_2)\right), \hspace{1mm} (x_1,x_2)\in\lambda Q.\]
		We define now the Zorich maps we get by this $h$, which we denote by $\mathcal{Z}$
		
		\begin{equation}\label{zorichdef}\mathcal{Z}_{\nu}(x_1,x_2,x_3)=\nu e^{x_3}h(x_1,x_2),\hspace{1mm} (x_1,x_2,x_3)\in \lambda Q\times \mathbb{R},\hspace{1mm}\nu>0.\end{equation}
		Again we extend this map to $\mathbb{R}^3$ by reflecting across the sides of the square beam and the plane $x_3=0$. Another important thing to note here is that during the extension process of our map $\mathcal{Z}_\nu$ from the initial square beam to the whole $\mathbb{R}^3$ we can also extend $\mathfrak{h}$ to a Lipschitz map from $\mathbb{R}^2$ to $\mathbb{R}^3$ with the same Lipschitz constant $L$. We will always assume that this extension has been done and when we talk about $\mathfrak{h}$ we will mean the extended one unless otherwise stated. Moreover, let us note here that this new Zorich map $\mathcal{Z}$ is conjugate to $x\mapsto Z(x_1,x_2,\lambda x_3)$, where $Z$ is the classic Zorich map without the scaling.
		\begin{remark2}
			Here it is worth elaborating on that last sentence. Instead of studying the family $\mathcal{Z}_\nu$, defined in \eqref{zorichdef}, we could have studied the family $\alpha \circ Z$, where $\alpha:\mathbb{R}^3\to\mathbb{R}^3$ is the linear map induced by the matrix $$\begin{pmatrix}
				\nu& 0&0\\
				0& \nu &0\\
				0&0&\nu\lambda
			\end{pmatrix} $$ and $Z$ is the Zorich map that leaves the planes $x_1=\pm x_2$ invariant and comes from using $\mathfrak{h}$. It is easy to see that the map  $\alpha \circ Z$ is conjugate with $\nu Z(x_1,x_2,\lambda x_3)$ and thus with $\mathcal{Z}_\nu$. The advantage of this viewpoint is that the Zorich maps we consider here and the maps considered by Bergweiler in \cite{bergk} can all be seen as maps in the space $\{\mathcal{A}\circ Z:\mathcal{A}\in GL_3(\mathbb{R})\}$, where $GL_3(\mathbb{R})$ is the general linear group of degree $3$. Thus $GL_3(\mathbb{R})\setminus\{0\}$ becomes the parameter space for Zorich maps in analogy with $\mathbb{C}\setminus\{0\}$ being the parameter space for the exponential map.
			
			Due to the conjugacy all of the theorems we are going to prove here are also true for the family $\alpha\circ Z$. We have chosen to use a different  presentation of Zorich maps than the one described here since that way the definition seems more natural and  the connection with the exponential family  is more apparent.

		\end{remark2}
		For the type of Zorich maps defined in \eqref{zorichdef} we will prove
		\begin{theorem}\label{main}
			Let $\lambda>L^{5}$. Then for all $\nu> \sqrt{\frac{2L}{\lambda}}$  the Zorich map $\mathcal{Z}_{\nu}$ we get using this scale factor $\lambda$ has as its Julia set  the whole $\mathbb{R}^3$.
		\end{theorem}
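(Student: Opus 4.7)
The plan is to reduce the three-dimensional problem to a two-dimensional one on the invariant symmetry planes $\Pi_{\pm}=\{x_1=\pm x_2\}$, prove a Misiurewicz-type theorem for the restricted dynamics, and then propagate non-normality from these planes to all of $\mathbb{R}^3$.

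First, I would verify that the symmetry conditions $\mathfrak{h}_1(x,x)=\mathfrak{h}_2(x,x)$ and $\mathfrak{h}_1(x,-x)=-\mathfrak{h}_2(x,-x)$, together with the reflective extension of $\mathcal{Z}_\nu$, make $\Pi_+$ (and similarly $\Pi_-$) forward invariant. Parametrising $\Pi_+$ by $(t,x_3)\in\mathbb{R}^2$ through $(t,t,x_3)$, the restriction $\mathcal{Z}_\nu|_{\Pi_+}$ becomes a two-dimensional quasiregular map of transcendental type built from the one-variable profile $t\mapsto h_1(t,t)$ and the exponential factor $e^{x_3}$. This restricted map plays the role of the exponential family $\kappa e^z$ in the plane, with a composite parameter depending on $\nu,\lambda$ and $\mathfrak{h}$ taking the place of $\kappa$, and with $\lambda$ governing the period in the analogue of the imaginary direction.

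Second, I would show that under the hypotheses $\lambda>L^{5}$ and $\nu>\sqrt{2L/\lambda}$ the restricted map on $\Pi_+$ is in the Misiurewicz regime: it has no attracting, super-attracting, or parabolic cycle, and in fact its Julia set equals all of $\Pi_+$. The threshold $\nu>\sqrt{2L/\lambda}$ is designed so that the $\mathcal{Z}_\nu$-image of the relevant slice of a fundamental domain cannot contain an attracting fixed point on the diagonal, while the bound $\lambda>L^{5}$ ensures that the bi-Lipschitz distortion of $\mathfrak{h}$ is dominated by the vertical expansion of $\mathcal{Z}_\nu$ over several iterates. The argument would proceed along Misiurewicz's original lines: assume a Fatou component $U\subset\Pi_+$ exists, use the absence of attracting or parabolic cycles to force the orbit of $U$ to escape to infinity in the $x_3$-direction, and derive a contradiction by showing that $\mathcal{Z}_\nu|_{\Pi_+}$ wraps horizontal strips uniformly over themselves.

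Third, I would lift non-normality from $\Pi_+$ to all of $\mathbb{R}^3$. Given $p\in\mathbb{R}^3$ and a neighbourhood $V$, one application of $\mathcal{Z}_\nu$ already stretches $V$ exponentially in $x_3$; after boundedly many further iterates $\mathcal{Z}_\nu^{n}(V)$ contains a full period-cell of the doubly periodic map, so that its next image covers an entire half-space, which meets $\Pi_+$ in a non-empty open set. Combining this surjectivity onto open subsets of $\Pi_+$ with the non-normality of $\{\mathcal{Z}_\nu^n\}$ on $\Pi_+$, the family cannot be normal at $p$ either, and since $p$ was arbitrary, $\mathcal{J}(\mathcal{Z}_\nu)=\mathbb{R}^3$.

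The hardest step is the second: transporting Misiurewicz's holomorphic argument into the quasiregular world, where Koebe-type distortion is unavailable and one must instead rely on the bi-Lipschitz constant $L$ of $\mathfrak{h}$. Bookkeeping the constants carefully and checking that the explicit thresholds $\lambda>L^{5}$ and $\nu>\sqrt{2L/\lambda}$ make the comparison between expansion and distortion work over arbitrarily many iterates is the central technical challenge.
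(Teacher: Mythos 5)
Your overall architecture — establish the Julia set on the invariant symmetry planes $x_1=\pm x_2$, then propagate to all of $\mathbb{R}^3$ — matches the paper's skeleton, but both of your two hard steps have genuine gaps, and your third step is essentially circular.

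For the restricted dynamics on $\Pi_+=\{x_1=x_2\}$, the paper observes (Section 3) that the restriction is conjugate to a map $g$ which is explicitly \emph{not} quasiregular (it reverses orientation on half the strips and is two-to-one there). So the classical Misiurewicz program you describe — rule out attracting and parabolic cycles, analyse Fatou components, conclude $\mathcal{J}=\Pi_+$ — is not available: there is no Fatou--Julia theory for $g$. What the paper proves instead (Theorem~\ref{plane}) is much weaker and more hands-on: any connected set of positive planar measure has some iterate under $g$ meeting the real axis. This is achieved by a Jacobian lower bound $|\det Dg|\ge \nu^2\lambda e^{2\lambda \real z}/L$ (Lemma~\ref{expand}), a resulting area-growth estimate $m(g^n(V))\to\infty$ when iterates avoid the real axis and its preimages (Lemma~\ref{volume}), and then a partition of each strip by the preimage curves $\gamma_m$ into regions whose areas $A_m$ are eventually decreasing and uniformly bounded — so iterates of $V$ cannot stay trapped. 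This area argument is what uses the hypothesis $\nu^2\lambda>2L$. The $x_3$-axis itself is shown to be in $\mathcal{J}(\mathcal{Z}_\nu)$ separately (Proposition~\ref{prop}), and together with complete invariance this places the planes $x_1=\pm x_2+2\lambda k$ in the Julia set.

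Your third step is where the proposal really breaks. You claim that after boundedly many iterates $\mathcal{Z}_\nu^n(V)$ contains a full period-cell, whose image covers a half-space. This is not a lemma to be invoked — it is precisely the conclusion to be proved, and it is exactly what the Misiurewicz-type difficulty is. A priori, iterates of a small open set could remain trapped in a thin region forever (indeed, they could stay inside one of the rectangular beams $B_{(0,0)}$ or $B_{(0,-1)}$ touching the $x_3$-axis, shadowing the $x_3$-axis orbit), and ruling this out is the hard part. The paper spends Section~4 on this. The key tool is the iterated Jacobian bound $\det D\mathcal{Z}_\nu^n(x)\ge (\lambda/L^5)^n\lambda^{-3}|p(\mathcal{Z}_\nu^n(x))|^3$ (Lemma~\ref{dete}) — this is where the hypothesis $\lambda>L^5$ is used, not in the planar step as you suggest — followed by a two-case analysis: if iterates of $V$ escape the beams adjacent to the $x_3$-axis infinitely often, the measure of $\mathcal{Z}_\nu^{n_j}(V)$ blows up but is trapped between the finite-volume level surfaces $S_n$ (Lemmas~\ref{surfacelemma}, \ref{axis}), a contradiction; if iterates eventually stay in one such beam, a delicate argument (Lemmas~\ref{upup}, \ref{newlemma}, \ref{lipschitz}) shows an iterated ball must enter a finite-volume region $A_2\cup A_3$ infinitely often while its measure diverges, again a contradiction. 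None of this can be replaced by "boundedly many iterates produce a period cell."

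In short: right high-level reduction to the symmetry planes, but (i) the planar step needs a measure-growth argument in place of the unavailable holomorphic Misiurewicz machinery, (ii) the roles of the two hypotheses $\lambda>L^5$ and $\nu>\sqrt{2L/\lambda}$ are swapped in your account, and (iii) the propagation from the planes to all of $\mathbb{R}^3$ is the main technical content of the theorem and cannot be dispatched in one sentence.
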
 
		\begin{remark2}
			We will actually prove a slightly stronger result. Namely, that if the assumptions of the above theorem are satisfied and $V$ is any open set of $\mathbb{R}^3$ then $\bigcup_{n\geq0}\mathcal{Z}_{\nu}^n(V)$ covers $\mathbb{R}^3\setminus\{0\}$. 
		\end{remark2}
		
		The above Theorem implies that the behaviour of the iterates of the Zorich maps, for those particular choices of the parameters, are chaotic in the whole $\mathbb{R}^3$. Along the way of proving Theorem \ref{main} we will also prove a Theorem on the measurable dynamics of Zorich maps which can be seen as the analogous result of a theorem for exponential maps due to Ghys, Sullivan and Goldberg (see Theorem \ref{fibers} in section \ref{remarks} for more details). Another fact usually associated with chaotic behaviour in a set is the density of periodic points on that set. In the complex plane it is well known and was first proven by Baker in \cite{Baker1968}, that periodic points of an entire transcendental map (in fact even repelling periodic points) are dense in its Julia set. However, it still unknown whether or not the periodic points of a quasiregular map on $\mathbb{R}^3$ are dense in its Julia set. We are able to prove that this is indeed the case for Zorich maps.
		
		\begin{theorem}\label{density}
			Let $\nu$ and $\lambda$ be as in Theorem \ref{main}. The periodic points of $\mathcal{Z_{\nu}}$ are dense in $\mathbb{R}^3$.	
		\end{theorem}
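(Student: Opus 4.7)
The plan is to combine the strengthened form of Theorem~\ref{main} with a Brouwer fixed-point argument applied to an inverse branch of a suitable iterate. Fix a nonempty open $U\subset\mathbb{R}^3$; we produce a periodic point inside $U$. Shrinking $U$ if necessary, I choose a closed ball $\overline{B}\subset U$ that avoids the origin, lies in the interior of a single fundamental domain of $\mathcal{Z}_\nu$, and is disjoint from its branch set. It then suffices to find a periodic point of $\mathcal{Z}_\nu$ in $\overline{B}$.

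The heart of the argument is to produce an integer $N\ge 1$ and an open set $V\subset B$ such that $\mathcal{Z}_\nu^N|_V$ is a homeomorphism whose image contains $\overline{B}$. Given such $V$ and $N$, the inverse branch $\psi:=(\mathcal{Z}_\nu^N|_V)^{-1}$ maps $\overline{B}$ continuously into $V\subset\overline{B}$; since $\overline{B}$ is homeomorphic to a closed $3$-ball, Brouwer's fixed-point theorem supplies some $x_*\in\overline{B}$ with $\psi(x_*)=x_*$, whence $\mathcal{Z}_\nu^N(x_*)=x_*$. This yields a periodic point of $\mathcal{Z}_\nu$ inside $U$.

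To construct $V$ and $N$, I would invoke the strengthening of Theorem~\ref{main} announced in the remark following it, namely $\bigcup_{n\ge 0}\mathcal{Z}_\nu^n(B)=\mathbb{R}^3\setminus\{0\}$. Together with the double periodicity of $\mathcal{Z}_\nu$ in $(x_1,x_2)$ and the exponential expansion furnished by the factor $e^{x_3}$, this covering property can be upgraded to a ``blowing-up'' conclusion: for every compact $K\subset\mathbb{R}^3\setminus\{0\}$ there is some $N$ for which $\mathcal{Z}_\nu^N(B)\supset K$. Indeed, once some iterate image $\mathcal{Z}_\nu^{N_0}(B)$ contains a full period rectangle in $(x_1,x_2)$ at some height, one further application of $\mathcal{Z}_\nu$ sweeps out an entire spherical shell, and subsequent iterates cover any prescribed compactum. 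Applying this to a neighborhood $K$ of $\overline{B}$ yields $N$ and a preimage component $V\subset B$ of $\overline{B}$; working off the iterated branch set $\bigcup_{k=0}^{N-1}\mathcal{Z}_\nu^{-k}(\mathcal{B}_{\mathcal{Z}})$ and then shrinking $V$ if necessary makes $\mathcal{Z}_\nu^N|_V$ a homeomorphism whose image still contains $\overline{B}$.

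The main obstacle is this ``blowing up with an injective branch'' step: promoting the union-covering property of Theorem~\ref{main} to a single iterate whose image contains $\overline{B}$, while simultaneously extracting a univalent preimage component inside the original $B$. The exponential stretching in the $x_3$-direction is what allows both requirements to be met once one has a first iterate image large enough in the periodicity variables. After this geometric input is secured, the Brouwer fixed-point conclusion is short and density of periodic points in $\mathbb{R}^3$ follows at once.
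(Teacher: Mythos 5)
Your overall strategy is exactly the paper's: follow the Fletcher--Nicks method \cite{Fletcher2013}, construct a continuous inverse branch of a high iterate $\mathcal{Z}_\nu^N$ mapping $\overline{U_0}$ into $U_0$, and invoke Brouwer's fixed-point theorem. However, there is a genuine gap in the step that is also the heart of the proof: producing the injective branch.

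Your proposal is to first find a single $N$ with $\mathcal{Z}_\nu^N(B)\supset K\supset\overline{B}$ and then to extract a ``preimage component'' $V\subset B$ of $\overline{B}$, removing the iterated branch set $\bigcup_{k=0}^{N-1}\mathcal{Z}_\nu^{-k}(\mathcal{B}_{\mathcal{Z}})$ and ``shrinking if necessary.'' This does not work as written. Two separate things go wrong. First, a component of $(\mathcal{Z}_\nu^N)^{-1}(\overline{B})$ meeting $B$ need not be contained in $B$; and even if it is, $\mathcal{Z}_\nu^N$ restricted to that component is merely a branched cover onto its image, which can fail to be injective even after one deletes the iterated branch set (deleting a $1$-dimensional set from a $3$-ball leaves a connected set on which the map can still be many-to-one). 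Second, shrinking $V$ to avoid the branch set shrinks the image, so the conclusion ``$\mathcal{Z}_\nu^N(V)$ still contains $\overline{B}$'' is not justified: covering and injectivity pull in opposite directions and cannot be reconciled by a generic shrinking argument. The paper resolves exactly this tension by \emph{building} the injective branch one step at a time: it constructs a finite chain of open sets $U_0,\dots,U_N$ with $U_{j+1}\subset\mathcal{Z}_\nu(U_j)$, each $U_j$ chosen to avoid the branch set, and $\overline{U_0}\subset U_N$. That construction is nontrivial---it first steers the orbit of $U_0$ onto one of the invariant planes $x_1=\pm x_2$, runs the two-dimensional dynamics there (via Lemma~\ref{volume}) until an iterate meets the $x_3$-axis, and only then uses the exponential blow-up near the axis to produce a large shell-shaped $U_N$ containing $\overline{U_0}$. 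Your proposal omits the detour through the invariant planes and the $2$D area-expansion argument entirely, yet those are precisely what allow the iterated inverse branch to stay univalent while the image grows.

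A secondary point: the single-iterate covering $\mathcal{Z}_\nu^N(B)\supset K$ is true but does not follow directly from the remark after Theorem~\ref{main}, which only asserts that the \emph{union} of the forward images covers $\mathbb{R}^3\setminus\{0\}$; one must trace through the shell-covering computation in the proof of Proposition~\ref{prop} to upgrade the union statement to a single-iterate statement. Your proposal elides this. In summary, the skeleton (Brouwer plus an inverse branch) is correct and matches the paper, but the construction of that inverse branch is the substance of the proof, and your treatment of it has a real gap rather than merely missing routine detail.
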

		
		Another object of study in the exponential family, and in transcendental complex dynamics in general, which is intimately connected with  the Julia set is the escaping set. It was first studied by Eremenko in \cite{Eremenko} and if $f:\mathbb{C}\to\mathbb{C}$ is an entire function then it is defined as \[I(f):=\{z\in \mathbb{C}:|f^n(z)|\to\infty \hspace{2mm}\text{as}\hspace{2mm}n\to\infty\}.\]Eremenko proved that $I(f)\not=\emptyset$ and that $\partial I(f)=\mathcal{J}(f)$.
		
		Moreover, for the exponential family from \cite{Eremenko1992} it is true that ${I(f)}\subset \mathcal{J}(f)$ and thus $I(f)$ is dense in the Julia set. When the Julia set is a Cantor bouquet, the escaping set consists of the disjoint curves that make up the Julia set together with some of their endpoints. In this case $I(f)$ is disconnected while $I(f)\cup\{\infty\}$ is connected (see \cite{Devaney2010}). On the other hand, when the Julia set of a map in the exponential family is the entire complex plane, the escaping set is dense in the complex plane and Rempe in \cite{REMPE2010} has proven that it is also connected.\\
		
		The situation is similar for the Zorich maps as well. As we already mentioned, in \cite{bergk,B-Nicks} it is proven that for some values of the parameter $\nu$ the Julia set consists of disjoint curves together with their endpoints and $I(\mathcal{Z}_{\nu})$ is again a disconnected subset of the Julia set. On the other hand we are able to show that 
		\begin{theorem}\label{escaping}
			For the same choice of $\nu$ and $\lambda$ as in Theorem \ref{main}, we have that the escaping set $I(\mathcal{Z}_{\nu})$ is a connected subset of $\mathbb{R}^3$.
		\end{theorem}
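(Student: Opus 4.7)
The plan is to adapt Rempe's strategy from \cite{REMPE2010} for the exponential family with full Julia set. The goal is to write $I(\mathcal{Z}_\nu)$ as an increasing union of connected sets, anchored by a single connected ``absorbing'' piece in the escaping set.

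First, I would identify a natural connected, unbounded subset of $I(\mathcal{Z}_\nu)$. The symmetry hypothesis on $\mathfrak{h}$ forces $\mathfrak{h}(0,0) = (0,0,1)$, so $\mathcal{Z}_\nu(0,0,t) = (0, 0, \nu\lambda e^t)$, and the positive $x_3$-axis $\gamma = \{(0,0,t) : t \geq 0\}$ is a forward-invariant subset of $I(\mathcal{Z}_\nu)$. I would then pass to a thicker ``fast-escaping'' set, for instance
\[
X = \{x \in \mathbb{R}^3 : (\mathcal{Z}_\nu^n(x))_3 \geq R \text{ for every } n \geq 0\},
\]
for a sufficiently large $R$. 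This set is closed, contained in $I(\mathcal{Z}_\nu)$, and by analogy with the results of Bergweiler \cite{bergk} and Bergweiler--Nicks \cite{B-Nicks} in the small-$\nu$ regime it carries a Cantor-bouquet-like local structure of hairs.

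The second step is to prove that $X$ is \emph{connected}. This is exactly where the hypothesis $\mathcal{J}(\mathcal{Z}_\nu)=\mathbb{R}^3$ should play a decisive role. By the stronger form of Theorem \ref{main} stated in the remark, $\bigcup_{n\geq0}\mathcal{Z}_\nu^n(V)=\mathbb{R}^3\setminus\{0\}$ for every nonempty open $V$, and this ``blowing-up'' yields that the individual hairs of $X$ accumulate on one another densely everywhere in $\mathbb{R}^3$; no clopen partition of $X$ is then compatible with this dense accumulation. To make this rigorous, I would combine a Bergweiler--Nicks-type description of the local shape of hairs with a separation argument: supposing $X = X_1 \sqcup X_2$ with disjoint relatively open pieces, I would find a hair meeting both, contradicting the dense accumulation.

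Third, I would express
\[
I(\mathcal{Z}_\nu) = \bigcup_{n\geq 0}\mathcal{Z}_\nu^{-n}(X),
\]
since any escaping orbit eventually has all $x_3$-coordinates above $R$. Each single branch of $\mathcal{Z}_\nu^{-1}$ on a connected subset of the upper half-space is a homeomorphism onto a connected subset of a fundamental beam, so the union of all branch preimages of a connected set is a union of connected sets, and by induction connectedness will propagate if each new level meets the previously constructed connected piece. The density of backward orbits (dual to the blowing-up property) guarantees that preimage branches at each level accumulate on the already-built union, which should be enough to conclude that $I(\mathcal{Z}_\nu)$ itself is connected.

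The main obstacle will be the second step: proving that $X$ is connected. In the small-$\nu$ regime treated in \cite{bergk,B-Nicks} the analogous set is a totally disconnected union of hairs, and the conceptual heart of the theorem is that for large $\nu$ the hairs glue together into a single connected mess because $\mathcal{J}(\mathcal{Z}_\nu)=\mathbb{R}^3$. Converting this intuition into a rigorous separation-type argument, in the quasiregular setting where the rigid tools of complex analysis are unavailable, is the real work; everything else (the existence of $\gamma \subset I$, the identity $I = \bigcup_n\mathcal{Z}_\nu^{-n}(X)$, the inductive gluing) should be comparatively routine.
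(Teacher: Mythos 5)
The starting point of your argument, the set
\[
X = \{x \in \mathbb{R}^3 : p_3(\mathcal{Z}_\nu^n(x)) \geq R \text{ for all } n \geq 0\},
\]
is almost certainly \emph{not} connected, and no amount of hand-waving about ``hairs gluing together'' will make it so. Every point of $X$ stays forever inside the upper half-space, so it stays inside specific fundamental beams $T_{(k,l)}$ and hence carries a well-defined itinerary. Points with non-comparable itineraries lie in distinct hairs, and the standard expansion estimates that underlie the Cantor-bouquet picture in \cite{bergk,B-Nicks} separate those hairs even when $\nu$ is large and $\mathcal{J}(\mathcal{Z}_\nu) = \mathbb{R}^3$. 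The hypothesis that the Julia set is everything does not merge hairs together: what it changes is the \emph{closure} behaviour, making the union of hairs dense, but it does not make $X$ itself connected. Your proposed separation argument (``no clopen partition is compatible with dense accumulation'') confuses density of $X$ in $\mathbb{R}^3$ with connectedness of $X$ as a subspace; a totally disconnected dense set admits plenty of clopen partitions in the subspace topology. So the step you yourself flag as ``the real work'' is not a gap to be filled — it is a claim that is very likely false.

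The paper avoids this trap entirely. Instead of hunting for a large connected subset of $I(\mathcal{Z}_\nu)$, it builds a \emph{small} connected dense one. The seed is the single curve $\gamma_0 = \{(0,0,x_3) : x_3 < 0\}$, not a thick absorbing region; one pulls $\gamma_0$ back under one fixed inverse branch $\Lambda_0$ to get $\gamma_k = \Lambda_0(\gamma_{k-1})$ and proves $\Gamma_0 = \bigcup_k \gamma_k$ is connected via a blow-up lemma (Lemma~\ref{con}): any open set meeting $\overline{\Gamma_0}$ must meet $\gamma_k$ for all large $k$, so $\Gamma_0$ cannot be split by an open set with $\Gamma_0$-free boundary. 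Translating and taking the other three inverse branches gives a connected set $Y = Y_0$, and the inductive construction $Y_{j+1} = \mathcal{Z}_\nu^{-1}(Y_j)\cup Y_j$ stays connected because each branch image $\Lambda_{k,l}(Y_j)$ explicitly shares anchor points $x_{n,m}$ with $Y_j$ — this is the precise mechanism you gesture at in your third step but do not supply. Finally, connectedness of $I(\mathcal{Z}_\nu)$ follows from the sandwich: $\bigcup_j Y_j$ contains $\bigcup_j \mathcal{Z}_\nu^{-j}((0,0,-1))$, which is dense in $\mathbb{R}^3$ by the Bergweiler--Nicks theorem on the pits effect, so $\bigcup_j Y_j$ is a connected subset of $I(\mathcal{Z}_\nu)$ whose closure contains $I(\mathcal{Z}_\nu)$. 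The lesson is that you need a connected set that is dense, not a connected set that is fat; your choice of $X$ conflates the two and leads you straight into the part of the problem that is genuinely hard (and, I believe, false).
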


		It is also worth mentioning here that there are other methods of constructing Zorich-like maps where instead of mapping squares to hemispheres through bi-Lipschitz functions we map squares to surfaces whose boundary lies on the plane $x_3=0$ and the half ray connecting the origin with a point on the surface intersects the surface only once. If we further impose some bound on the angle between that ray and the tangent plane to the surface (see section \ref{pyramida} for more details) we can use our methods and prove a Theorem similar to Theorem \ref{main}.
		
		To state the theorem let us denote those Zorich maps with $\mathcal{Z}_{gen}$. In the construction of those maps we will use a bi-Lipschitz map $h_{gen}$ which will be the rescaled version, by a factor of $\lambda$, of another $L$ bi-Lipschitz map $\mathfrak{h}$.
		Note that we do not introduce the parameter $\nu$ in this case for simplicity.

		\begin{theorem}\label{pyramid}
			For $\lambda> C_{h_{gen}}$ the Julia set $\mathcal{J}(\mathcal{Z}_{gen})$  is the entire $\mathbb{R}^3$, where  $C_{h_{gen}}$ a constant depending on $h_{gen}$.
		\end{theorem}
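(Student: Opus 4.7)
The plan is to follow the same strategy as in the proof of Theorem~\ref{main}, adapting each step so that the role played by the specific hemisphere geometry is taken over by the bi-Lipschitz constant of $h_{gen}$ together with the angular transversality condition on the surface. As in Theorem~\ref{main}, I would aim to prove the slightly stronger statement that for every open set $V\subset\mathbb{R}^3$ the forward orbit $\bigcup_{n\ge 0}\mathcal{Z}_{gen}^n(V)$ covers $\mathbb{R}^3\setminus\{0\}$, from which $\mathcal{J}(\mathcal{Z}_{gen})=\mathbb{R}^3$ follows immediately.

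First I would set up the quantitative estimates for $\mathcal{Z}_{gen}$ that replace those used in the proof of Theorem~\ref{main}. The uniform lower bound on the angle between each radial ray from the origin and the tangent plane to the surface yields a positive lower bound on the radial component of the derivative of $h_{gen}$; this is what guarantees that the exponential factor $e^{x_3}$ genuinely expands image sets in $\mathbb{R}^3$ and is not dissipated along the surface. Combined with an upper bound on $\|Dh_{gen}\|$ coming from the bi-Lipschitz assumption, these estimates give, for $\lambda$ exceeding an explicit threshold $C_{h_{gen}}$, a lower bound on the expansion rate of $\mathcal{Z}_{gen}$ sufficient to drive the covering argument.

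The covering argument itself then proceeds as in Theorem~\ref{main}: starting from a small ball $B\subset V$ contained in a single fundamental square beam, the derivative estimates show that the forward images grow in the $x_3$-direction until the $x_3$-extent exceeds the vertical period of the map. At that stage, one further iterate produces an image containing a spherical shell whose ratio of outer to inner radius exceeds the scaling factor of one period, hence encompassing a full horizontal period of the map; the double periodicity in $(x_1,x_2)$, together with the reflection symmetries used to extend $\mathcal{Z}_{gen}$ to $\mathbb{R}^3$, then completes the covering of $\mathbb{R}^3\setminus\{0\}$. The precise value of $C_{h_{gen}}$ is dictated by the requirement that this chain of inequalities goes through.

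The main obstacle I expect is not conceptual but technical. The proof of Theorem~\ref{main} exploits the invariant planes $x_1=\pm x_2$ to reduce parts of the argument to a two-dimensional setting, and also benefits from explicit formulas available in the hemisphere case. For $\mathcal{Z}_{gen}$ neither of these is available, so the symmetry-assisted reductions must be replaced by direct three-dimensional estimates based solely on the angular transversality condition. Obtaining a workable form of $C_{h_{gen}}$ in terms of the bi-Lipschitz constant of $h_{gen}$ and the angle bound is where most of the effort will go, and I expect the resulting constant to be larger and less transparent than the $L^5$ threshold appearing in Theorem~\ref{main}.
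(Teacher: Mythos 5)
There is a genuine gap, and it stems from a misreading of the hypotheses of the generalized construction. You assert that for $\mathcal{Z}_{gen}$ the invariant planes $x_1=\pm x_2$ are no longer available and that the two-dimensional reduction must be abandoned in favour of direct three-dimensional estimates. But the paper explicitly imposes the same symmetry conditions $\mathfrak{h}_{gen,1}(x_1,x_1)=\mathfrak{h}_{gen,2}(x_1,x_1)$ and $\mathfrak{h}_{gen,1}(x_1,-x_1)=-\mathfrak{h}_{gen,2}(x_1,-x_1)$ on the generalized map, so the planes $x_1=\pm x_2$ (and all their $2\lambda\mathbb{Z}$-translates) remain invariant. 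The paper's proof proceeds exactly as for Theorem~\ref{main}: it conjugates $\mathcal{Z}_{gen}$ on the plane $x_1=x_2$ to a two-dimensional map $\hat g$, proves a Jacobian lower bound there using the non-tangential position vector property (giving an extra factor $\sin\theta_{\mathcal{S}}\min_{x\in Q}|\mathfrak{h}_{gen}(x)|$), derives that these planes lie in the Julia set, and then re-runs the full measure-theoretic contradiction argument (the analogues of Lemmas~\ref{dete}, \ref{upup}, \ref{metro}, \ref{surfacelemma}, \ref{axis}, \ref{newlemma}, \ref{lipschitz}). By discarding the invariant planes you remove the mechanism that produces a dense family of Julia-set planes partitioning $\mathbb{R}^3$ into beams, and you then have no handle on an open set $V$ whose iterates avoid a neighbourhood of the $x_3$-axis.

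A second issue is your description of the covering step itself. You propose that forward images of a small ball grow in the $x_3$-direction until they exceed a ``vertical period'' and then produce a spherical shell. This resembles the argument in Proposition~\ref{prop}, which only applies to neighbourhoods of points on the $x_3$-axis, where iterates of the centre escape vertically and Lemma~\ref{lemma0} gives expansion of balls. The map has no vertical period ($e^{x_3}$ is not periodic in $x_3$), and for a generic open set $V$ whose orbit may stay well away from the $x_3$-axis there is no a priori growth in $x_3$ at all. The actual proof of Theorem~\ref{main} does not directly show forward images cover $\mathbb{R}^3$; it argues by contradiction that if $V$'s iterates never cross the Julia-set planes, then either they leave the central beams infinitely often (forcing $m(\mathcal{Z}_\nu^{n}(V))\to\infty$ via the determinant estimate, contradicting the bounded volume between the level surfaces $S_n$) or they stay in a central beam and are pushed into the finite-volume region $A_2\cup A_3$ infinitely often, giving the same contradiction. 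Without engaging with this structure — the invariant planes, the surfaces $S_n$, and the sets $A_i$ — your sketch does not reach the conclusion.

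The part you do get right is that the constant $C_{h_{gen}}$ must incorporate both the bi-Lipschitz constant $L$ and the transversality angle $\theta_{\mathcal{S}}$, via a lower bound of the form $\det D\mathcal{Z}_{gen}(x)\geq e^{3x_3}\lambda\min_{x\in Q}|\mathfrak{h}_{gen}(x)|\sin\theta_{\mathcal{S}}/L^2$; this matches Lemma~\ref{detev2} and gives the threshold $C_{h_{gen}}=\max\{L^5,2L\}/(\min_{x\in Q}|\mathfrak{h}_{gen}(x)|\sin\theta_{\mathcal{S}})$.
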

		
		The proof of this theorem is essentially the same as the one we gave for Theorem~ \ref{main}. We will give a sketch of the proof in  section \ref{pyramida} where we also find an explicit value for the constant $C_{h_{gen}}$.  
		
		Theorems \ref{density} and \ref{escaping} possibly also hold for those kind of Zorich maps with very similar proofs although we forgo the effort of proving them here.  \\
		
		The structure of the rest of paper is as follows. In section 2 we give some definitions and some preliminary results. In section 3 we study our Zorich maps on the planes $x_1=\pm x_2$. In section 4 we prove Theorem \ref{main}. Section 5 is dedicated to the escaping set and the proof of Theorem \ref{escaping}. In section 6 we prove Theorem \ref{density} while  in section 7 we discuss the more general Zorich maps. Finally, in section 8 we discuss some further questions and prove a theorem on the measurable dynamics of the Zorich maps.
		\begin{remark3}
			I would like to thank my supervisor  Daniel Nicks for all his help and encouragement while writing this paper. I would also like to thank Alastair Fletcher, Daniel Meyer and the anonymous referee for their questions which led to a more general version of Theorem \ref{pyramid} and the referee for their thoughtful comments which improved the exposition.
		\end{remark3}

		\section{Preliminaries and background on quasiregular dynamics}
		Here we give a brief overview of the terminology and the notation we are going to need. For definitions and a more detailed treatment of quasiregular maps we refer to \cite{Rickman, vuorinen}. For a survey in the iteration of such maps we refer to \cite{Berg1}.\\\\
		Here we will just note that if  $d\geq 2$ and $G\subset \mathbb{R}^d$ is a domain and $f=(f_1,f_2,\cdots, f_d):G\to\mathbb{R}^d$ is a differentiable map we will denote the total derivative of this map by
		$Df(x)$. Also $|Df(x)|$ denotes the operator norm of the derivative, meaning $$|Df(x)|=\sup_{|h|=1}|Df(x)(h)|,$$ and $J_f(x)$ denotes the Jacobian determinant. Moreover we set $$\ell(Df(x))=\inf_{|h|=1}|Df(x)(h)|.$$

		We will also need the notion of the capacity of a condenser in order to define the Julia set of a quasiregular map. A condenser in $\mathbb{R}^d$ is a pair $E=(A,C)$, where $A$ is an open set in $\mathbb{R}^d$ and $C$ is a compact subset of $A$. The \textit{conformal capacity} or just \textit{capacity} of the condenser $E$ is defined as 
		$$\capac E=\inf_{u}\int_{A}|\nabla u|^ddm,$$
		where the infimum is taken over all non-negative functions $u\in C_0^{\infty}(A)$ which satisfy $u_{|C}\geq 1$ and $m$ is the $d$-dimensional Lebesgue measure.\\\\
		If $\capac (A,C)=0$ for some bounded open set $A$ containing $C$, then it is also true that $\capac (A',C)=0$ for every other bounded set $A'$ containing $C$;\cite[Lemma III.2.2]{Rickman}. In this case we say that $C$ has zero capacity and we write $\capac C=0$; otherwise we say that $C$ has positive capacity and we write $\capac C>0$. Also for an arbitrary set $C\subset \mathbb{R}^d$, we write $\capac C=0$ when $\capac F=0$ for every compact subset $F$ of $C$. If the capacity of a set is zero then this set has Hausdorff dimension zero \cite[Theorem~VII.1.15]{Rickman}. Thus a zero capacity set is small in this sense. It is also quite easy to see that for any two sets $S,B$ with $S\subset B$ if $\capac B=0$ then $\capac S=0$.\\

		In \cite{berg2013} Bergweiler developed a Fatou-Julia theory for  quasiregular self-maps of $\overline{\mathbb{R}^d}$, which include polynomial type quasiregular maps, and can be thought of as analogues of rational maps, while in \cite{B-Nicks} Bergweiler and Nicks did the same but for transcendental type quasiregular maps. Following those two papers we define the Julia set of $f:\mathbb{R}^d\to \mathbb{R}^d$, denoted ${\mathcal{J}}(f)$, to be the set of all those $x\in\mathbb{R}^d$ such that
		$$ \capac \left(\mathbb{R}^d\setminus \bigcup_{k=1}^\infty f^k(U)\right)=0
		$$	for every neighbourhood $U$ of $x$. We call the complement of ${\mathcal{J}}(f)$ the \textit{quasi-Fatou set}, and we denote it by $QF(f)$.  
		
		Note here that we used something like the blow-up property, that the Julia set  in complex dynamics has (see for example \cite[Theorem 4.10]{milnor}), in order to define our Julia set. Also note that we do not assume anything about the normality of the family of iterates of $f$ in the quasi-Fatou set. It turns out that the Julia set we defined enjoys many of the properties that the classical Julia set for holomorphic maps has. In particular, a property that we will use in this paper is that the Julia set is a \textit{completely invariant} set, meaning that $x\in\mathcal{J}(f)$ if and only if $f(x)\in \mathcal{J}(f)$. For more details and motivation behind the definition of the Julia set we refer to \cite{Berg1,berg2013,B-Nicks}. 
		
		As we already have noted, the Zorich map is a quasiregular map of transcendental type and thus the above definitions make sense for this map. 
		
		In this section we will also prove 
		\begin{proposition}\label{prop}
			The $x_3$-axis belongs in $\mathcal{J}(\mathcal{Z_{\nu}})$ for all $\lambda\geq1$ and $\nu$ with $\lambda\nu> 1/e$.	
		\end{proposition}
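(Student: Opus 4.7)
First, the symmetry conditions $\mathfrak{h}_{1}(x_{1},x_{1})=\mathfrak{h}_{2}(x_{1},x_{1})$ and $\mathfrak{h}_{1}(x_{1},-x_{1})=-\mathfrak{h}_{2}(x_{1},-x_{1})$ force $\mathfrak{h}(0,0)=(0,0,1)$, so $h(0,0)=(0,0,\lambda)$ and $\mathcal{Z}_{\nu}(0,0,t)=(0,0,\lambda\nu e^{t})$. Thus the $x_{3}$-axis is $\mathcal{Z}_{\nu}$-invariant, and the induced one-dimensional dynamics is $\phi(t)=\lambda\nu e^{t}$. Since $\min_{t\in\mathbb{R}}(\phi(t)-t)=1+\log(\lambda\nu)$ and $\lambda\nu>1/e$, we have $\phi(t)>t$ for every $t$, and each axis orbit $a_{n}:=\phi^{n}(a)$ escapes monotonically to $+\infty$.

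Fix $a\in\mathbb{R}$ and a small neighborhood $U=B((0,0,a),\delta)$. The plan is to show
\[
\bigcup_{n\ge 0}\mathcal{Z}_{\nu}^{n}(U)\;\supseteq\;\mathbb{R}^{3}\setminus\{0\},
\]
which suffices since $\{0\}$ has zero capacity. The Jacobian at an axis point $(0,0,t)$ will turn out to be block-diagonal: the last column is $(0,0,\lambda\nu e^{t})^{\top}$ because $h(0,0)=(0,0,\lambda)$, while the first two columns have vanishing third coordinate because $h_{3}$ attains its maximum at $(0,0)$, so its partials there vanish. The upper $2\times 2$ block equals $\nu e^{t}$ times the planar differential of $h$ at the origin, which has smallest singular value at least $1/L$ by the bi-Lipschitz property of $\mathfrak{h}$. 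Composing along the axis orbit and using $e^{a_{k}}=a_{k+1}/(\lambda\nu)$, I obtain
\[
\ell\bigl(D\mathcal{Z}_{\nu}^{n}(0,0,a)\bigr)\;\geq\;(L\lambda)^{-n}\prod_{k=1}^{n}a_{k},
\]
which grows hyperexponentially since $a_{k}$ itself grows tower-exponentially.

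Next I will translate this infinitesimal expansion into a genuine image-ball bound. On the fundamental beam $\lambda Q\times\mathbb{R}$ the map $\mathcal{Z}_{\nu}$ is a diffeomorphism onto the open upper half-space with explicit local inverse
\[
\mathcal{Z}_{\nu}^{-1}(y)\;=\;\bigl(h^{-1}(\lambda y/|y|),\,\log(|y|/(\lambda\nu))\bigr),
\]
whose Lipschitz constant on any ball avoiding the origin is $O(1/|y|)$. Since the axis orbit stays well inside the fundamental beam and disjoint from the branch set $\mathcal{B}_{\mathcal{Z}_{\nu}}$, an inductive application of the inverse function theorem should yield $\mathcal{Z}_{\nu}^{n}(U)\supseteq B((0,0,a_{n}),r_{n})$ with $r_{n}/a_{n}\to\infty$. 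Once $r_{n}>3\sqrt{2}\,\lambda$, this ball contains the box $[-3\lambda,3\lambda]^{2}\times[a_{n}-c_{n},a_{n}+c_{n}]$ with $c_{n}=\sqrt{r_{n}^{2}-18\lambda^{2}}$. The square $[-3\lambda,3\lambda]^{2}$ is the union of nine reflected copies of $\lambda Q$, and the extended map $h$ sends these collectively onto the full sphere $\{y\in\mathbb{R}^{3}:|y|=\lambda\}$; hence $\mathcal{Z}_{\nu}$ maps the box exactly onto the spherical shell $\{y\in\mathbb{R}^{3}:\nu\lambda e^{a_{n}-c_{n}}\leq|y|\leq\nu\lambda e^{a_{n}+c_{n}}\}$. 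Because $r_{n}/a_{n}\to\infty$, the inner radius tends to $0$ and the outer radius to $\infty$, so the union of these shells covers every non-zero $y$.

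I expect the main obstacle to be the distortion estimate needed to pass from the infinitesimal Jacobian bound to a genuine image-ball radius for the composite quasiregular map. The linearization is accurate only on small scales, so producing a ball of radius comparable to $\delta(L\lambda)^{-n}\prod_{k=1}^{n}a_{k}$ requires careful inductive bookkeeping using the explicit inverse above, together with the fact that the axis orbit avoids the branch set.
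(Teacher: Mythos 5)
Your overall strategy matches the paper's: expand a neighbourhood $U$ of an axis point along the escaping axis orbit, then use the fact that large boxes map to spherical shells whose union covers $\mathbb{R}^{3}\setminus\{0\}$. But there are two genuine gaps.

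First, the claim that the inductive inverse-function argument yields \emph{full} balls $\mathcal{Z}_{\nu}^{n}(U)\supseteq B((0,0,a_{n}),r_{n})$ with $r_{n}/a_{n}\to\infty$ is not correct. Since $U$ lies in the fundamental beam near the axis, $\mathcal{Z}_{\nu}(U)$ lies in the \emph{open upper} half-space, and the inverse branch $\Lambda_{(0,0)}$ used to propagate the expansion is only defined and contracting on a half-space $H_{>\nu\lambda e^{M_{0}}}$. Iterating therefore yields only \emph{half}-balls of the form $B(\mathcal{Z}_{\nu}^{k}(x),R_{k})\cap H_{>c_{k}}$; the papers' Lemma~\ref{lemma0} is formulated precisely this way. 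A full ball of radius larger than $a_{n}$ would contain points with $x_{3}<0$, which cannot appear in the image after just one step, so $r_{n}/a_{n}\to\infty$ for full balls is incompatible with the first iterate. Also, the pointwise bound on $\ell(D\mathcal{Z}_{\nu}^{n})$ does not directly control the radius of an image ball for the quasiregular composite; you explicitly flag this distortion issue, and indeed it is where a pointwise-Jacobian approach founders. The paper sidesteps this entirely by working with a uniform Lipschitz constant $\alpha<1$ for the inverse branch rather than with singular values of the derivative.

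Second, because you only get half-balls, the box you can extract has the form $[-3\lambda,3\lambda]^{2}\times[a_{n},a_{n}+c_{n}]$, not $[-3\lambda,3\lambda]^{2}\times[a_{n}-c_{n},a_{n}+c_{n}]$. Applying $\mathcal{Z}_{\nu}$ then gives a shell whose \emph{inner} radius is $\nu\lambda e^{a_{n}}\to\infty$, so the union of these shells misses all small $|y|$ and the argument does not conclude. The paper handles this with an extra iteration: the shell obtained has unbounded and growing thickness, so it contains a translate $\{|x_{1}-2\lambda q_{k,1}|\leq 2\lambda,\,|x_{2}-2\lambda q_{k,2}|\leq 2\lambda,\,|x_{3}|\leq t_{k}\}$ of the fundamental period centred at height $x_{3}=0$ with $t_{k}\to\infty$; applying $\mathcal{Z}_{\nu}$ once more to this box produces the shell $\{\nu\lambda e^{-t_{k}}\leq|x|\leq\nu\lambda e^{t_{k}}\}$ whose inner radius tends to $0$. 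You need this second step. Finally, a minor point: you use that the partials of $h_{3}$ vanish at $(0,0)$ because the maximum is attained there, but $\mathfrak{h}$ is only bi-Lipschitz and so differentiability at that specific point is not guaranteed; this is avoidable but should be noted.
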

		Before we prove this proposition let us name a few things first. Using the same notation as in \cite{bergk}, for $r=(r_1,r_2)\in\mathbb{Z}^2$ we define \[P(r)=P(r_1,r_2):=\{(x_1,x_2)\in\mathbb{R}^2:|x_1-2\lambda r_1|<\lambda, |x_2-2\lambda r_2|<\lambda\}.\] For $c\in\mathbb{R}$ we also define $H_{>c}$ to be the half-space $\{(x_1,x_2,x_3)\in\mathbb{R}^3:x_3>c\}$ and we define $H_{\geq c}$, $H_{<c}$ similarly. We observe here that $\mathcal{Z}_{\nu}$ maps $P(r_1,r_2)\times \mathbb{R}$ bijectively to $H_{>0}$, when $r_1+r_2$ is even and to $H_{<0}$ when $r_1+r_2$ is odd.  Thus there is an inverse branch $\Lambda_{(0,0)}:H_{>0}\to P(0,0)\times\mathbb{R}$. We can now, as in \cite{bergk}, find constants $M_0\in\mathbb{R}$ and $\alpha\in (0,1)$ such that \begin{equation}\label{eq101}|\Lambda_{(0,0)}(x)-\Lambda_{(0,0)}(y)|\leq \alpha|x-y|,\hspace{1mm} \text{for all}\hspace{1mm}x,y\in H_{>\nu\lambda e^{M_0}}.\end{equation}
		The next lemma is similar to \cite[Lemma 7.1]{B-Nicks}.
		\begin{lemma}\label{lemma0}
			Let $M>M_0>0$ be a large positive number and $x\in \Lambda_{(0,0)}(H_{>\nu\lambda e^M})$. Then \begin{equation}\label{eq1}\mathcal{Z}_{\nu}(B(x,R)\cap H_{\geq M})\supset B(\mathcal{Z}_{\nu}(x),\alpha^{-1}R )\cap H_{>\nu\lambda e^M},\end{equation} where $R>0$ and $B(x,R)$ denotes the ball of centre $x$ and radius $R$.
		\end{lemma}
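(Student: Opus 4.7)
The plan is to prove the stated inclusion by producing, for each $y\in B(\mathcal{Z}_\nu(x),\alpha^{-1}R)\cap H_{>\nu\lambda e^M}$, a preimage $z\in B(x,R)\cap H_{\geq M}$ with $\mathcal{Z}_\nu(z)=y$. The natural candidate is $z:=\Lambda_{(0,0)}(y)$, which is well defined because $H_{>\nu\lambda e^M}\subset H_{>0}$, the domain of the inverse branch, and by construction $\mathcal{Z}_\nu(z)=y$.

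Next I would verify that $z\in B(x,R)$ by a direct application of the contraction estimate \eqref{eq101}. Since $M>M_0$, both $y$ and $\mathcal{Z}_\nu(x)$ lie in $H_{>\nu\lambda e^M}\subset H_{>\nu\lambda e^{M_0}}$, so \eqref{eq101} applies to this pair. Using the identity $x=\Lambda_{(0,0)}(\mathcal{Z}_\nu(x))$, which follows from the hypothesis $x\in\Lambda_{(0,0)}(H_{>\nu\lambda e^M})$ together with the injectivity of $\mathcal{Z}_\nu$ on $P(0,0)\times\mathbb{R}$, one gets
\[
|z-x|=|\Lambda_{(0,0)}(y)-\Lambda_{(0,0)}(\mathcal{Z}_\nu(x))|\leq\alpha\,|y-\mathcal{Z}_\nu(x)|<\alpha\cdot\alpha^{-1}R=R.
\]

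For the inclusion $z\in H_{\geq M}$ I would invoke the explicit formula \eqref{zorichdef}. On the beam $P(0,0)\times\mathbb{R}$ the third coordinate of $\mathcal{Z}_\nu(z)=y$ equals $\nu e^{z_3}h_3(z_1,z_2)$. Because $h(x_1,x_2)=\lambda\mathfrak{h}(x_1/\lambda,x_2/\lambda)$ and $\mathfrak{h}$ maps into the unit upper hemisphere, we have $h_3(z_1,z_2)\leq\lambda$. Combining with $\nu e^{z_3}h_3(z_1,z_2)=y_3>\nu\lambda e^M$ gives $e^{z_3}>e^M$, hence $z_3>M$. Putting the two estimates together yields $z\in B(x,R)\cap H_{\geq M}$, which is exactly what was needed.

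The argument is essentially mechanical once the contraction estimate \eqref{eq101} is at hand; the only mildly subtle point is noticing that the weak bound $h_3\leq\lambda$ is precisely what converts the inequality $y_3>\nu\lambda e^M$ into $z_3>M$, reflecting the fact that the scaling built into $h$ is exactly compensated by the exponential magnification of the third coordinate by $\mathcal{Z}_\nu$. I do not anticipate a genuine obstacle beyond keeping track of which points lie in the domain of $\Lambda_{(0,0)}$, which is guaranteed by the hypothesis $M>M_0$.
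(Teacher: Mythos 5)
Your proof is correct and follows essentially the same route as the paper's: take $z=\Lambda_{(0,0)}(y)$, apply the contraction estimate \eqref{eq101} to get $z\in B(x,R)$, and conclude. The only difference is that you explicitly verify $z\in H_{\geq M}$ via $h_3\leq\lambda$, whereas the paper records the equivalent observation ($\Lambda_{(0,0)}(H_{>\nu\lambda e^M})\subset P(0,0)\times(M,\infty)$) only for $x$ at the outset and leaves its application to $\Lambda_{(0,0)}(y)$ implicit.
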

		\begin{proof}
			Note that $x\in \Lambda_{(0,0)}(H_{>\nu\lambda e^M})$ implies that $x\in P(0,0)\times (M,\infty)$.
			Let $$y\in B(\mathcal{Z}_{\nu}(x),\alpha^{-1}R )\cap H_{>\nu\lambda e^M}.$$ Then by (\ref{eq101}) we have that \[|x-\Lambda_{(0,0)}(y)|=|\Lambda_{(0,0)}(\mathcal{Z}_{\nu}(x))-\Lambda_{(0,0)}(y)|\leq \alpha|\mathcal{Z}_{\nu}(x)-y|<R.\]
			Hence, $\Lambda_{(0,0)}(y)\in B(x,R)\cap P(0,0)$ and thus $y=\mathcal{Z}_{\nu}(\Lambda_{(0,0)}(y))\in \mathcal{Z}_{\nu}(B(x,R)\cap H_{\geq M})$.
		\end{proof}
		\begin{proof}[Proof of Proposition \ref{prop}]

			Let us fix a point $x=(0,0,x_0)$ on the $x_3$-axis and consider a neighbourhood $U$ of that point. It is easy to see now that $\mathcal{Z}^{k}_{\nu}(x)=(0,0,E^k_{\nu\lambda}(x_0))$, where $E_{\nu\lambda}^k$ denotes the $k$-th iterate of the map $E_{\nu\lambda}(t)=\nu\lambda e^t$. Since the $x_3$-axis is invariant under our Zorich map and since  $\nu\lambda> 1/e$ we have that $E_{\nu\lambda}^k(x)\to \infty$ and thus we may assume that $x\in H_{\geq M}$, for some $M>M_0$. By repeatedly applying (\ref{eq1}) we may now obtain a sequence $R_k\to \infty$ with \[\mathcal{Z}_{\nu}^k(U)\supset B(\mathcal{Z}_{\nu}^k(x),R_k)\cap H_{E_{\nu\lambda}^k(M)}\] and we note that the intersection on the right hand side  always contains the upper half of the ball $B(\mathcal{Z}_{\nu}^k(x),R_k)$. Hence, for large enough $k$, the set \[V_k=\{(x_1,x_2)\in\mathbb{R}^2:|x_1|\leq 2\lambda,|x_2|\leq2\lambda\}\times \left[E_{\nu\lambda}^k(x_0),E_{\nu\lambda}^k(x_0)+R_k/2\right],\]  is contained in $B(\mathcal{Z}_{\nu}^k(x),R_k)\cap H_{E_{\nu\lambda}^k(M)}$. Observe now that $\mathcal{Z}_{\nu}$ maps $V_k$ onto the shell \[A_k=\{x\in\mathbb{R}^3:\nu\lambda \exp\left({E_{\nu\lambda}^k(x_0)}\right)\leq|x|\leq \nu\lambda \exp\left({E_{\nu\lambda}^k(x_0)+R_k/2}\right)\}.\] It is easy to see now that this shell, for large enough $k$ and since $R_k\to \infty$, contains a set of the form \[\{(x_1,x_2,x_3)\in\mathbb{R}^3:|x_1-2\lambda q_{k,1}|\leq 2\lambda,|x_2-2\lambda q_{k,2}|\leq2\lambda,|x_3|\leq t_k\},\] with $q_{k,1}, q_{k,2}\in\mathbb{Z}$ and $t_k\to \infty$. This implies that \[\{x\in\mathbb{R}^3:\nu\lambda e^{-t_k}\leq|x|\leq \nu\lambda e^{t_k}\}\subset \mathcal{Z}_{\nu}(A_k)\subset\mathcal{Z}_{\nu}^{k+2}(U).\] Hence $\bigcup_{k=1}^{\infty}\mathcal{Z}_{\nu}^k(U)=\mathbb{R}^3\setminus\{(0,0,0)\}$ which means that $x\in\mathcal{J}(\mathcal{Z}_{\nu})$.
		\end{proof}
		\section{The Zorich map on the planes $x_1=\pm x_2$}\label{section3}
		Here we will prove some basic facts about the Zorich family we have constructed. As we already have mentioned in the introduction our Zorich maps send the planes $x_1=x_2$ and $x_1=-x_2$ to themselves. We would like to know the behaviour of $\mathcal{Z}_{\nu}$ restricted to those planes. With that in mind, we observe that restricted to the plane $x_1=x_2$ our Zorich map is conjugate through  $\phi(x_1,x_1,x_3)=\frac{1}{\lambda}(x_3+i\sqrt2 x_1)$ to the map $g:\mathbb{C}\to\mathbb{C}$ given by \[g(z):= \begin{cases}
			\psi(	\bar{z}+2 \sqrt2i),&\hspace{1mm}\imag (z)\in\left[(4k+1)\sqrt2,(4k+3)\sqrt2\right]\\\\\psi(z),&\hspace{1mm}\imag (z)\in\left[(4k-1)\sqrt2,(4k+1)\sqrt2\right],
		\end{cases}\]where  $k\in\mathbb{Z}$ and $\psi(x+iy)= \nu e^{\lambda x}\left(\mathfrak{h}_3\left(\frac{y}{\sqrt2},\frac{y}{\sqrt2}\right)+i\sqrt2\mathfrak{h}_1\left(\frac{y}{\sqrt2},\frac{y}{\sqrt2}\right)\right)$. Similarly the Zorich map is conjugate to a similar map to $g$ on the plane $x_2=-x_1$ and everything that follows works in that case as well. For simplicity let us write $a(y)$ and $b(y)$ instead of $\mathfrak{h}_3(y/\sqrt2,y/\sqrt2)$ and $\sqrt2\mathfrak{h}_1(y/\sqrt2,y/\sqrt2)$. Note that $a^2(y)+b^2(y)=1$. Also let us note here that the function $\psi$ is quasiregular and that $g(\mathbb{C})=\{\real z>0 \}$. Furthermore, $g$ is not a quasiregular map, since it is not sense preserving, and is a two to one function in the strip $\{z\in\mathbb{C}:(4k-1)\sqrt2\leq\imag(z)\leq(4k+3)\sqrt2\}$.

		We would now like to show that the planes $x_1=\pm x_2$ belong to the Julia set of $\mathcal{Z}_{\nu}$. We already know, from Proposition \ref{prop}, that the $x_3$-axis belongs to the Julia set. With that in mind we will prove that any open set in $\mathbb{R}^3$ that intersects those planes also intersects the $x_3$-axis under iteration by $\mathcal{Z}_\nu$. Now since we know that $\mathcal{Z}_{\nu}$ is conjugate to $g$ on those planes it is enough to prove that any open set in the complex plane intersects the real axis under iteration by $g$.

		\begin{theorem}\label{plane}
			Let $\nu^2\lambda>2L$ and $V\subset\mathbb{C}$ be a connected set with $m(V)>0$, where $m$ is the 2 dimensional Lebesgue measure. Then $g^n(V)$ intersects the real axis for some $n\in\mathbb{N}$.
		\end{theorem}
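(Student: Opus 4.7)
The strategy is to argue by contradiction: suppose $g^n(V)\cap\mathbb{R}=\emptyset$ for every $n\geq 0$, and write $V_n:=g^n(V)$. Since $g$ is continuous, each $V_n$ is connected, and since it misses $\mathbb{R}$ it lies in one open half-plane $\{\pm\imag z>0\}$. For $n\geq 1$ it also lies in $\{\real z>0\}$ because $g(\mathbb{C})\subset\{\real z\geq 0\}$. The requirement that $V_{n+1}$ lies entirely in one half-plane translates into $V_n\subset g^{-1}(\{\pm\imag z>0\})$, and since $\imag g(z)=\nu e^{\lambda x}b(\phi(y))$ (with $\phi$ the periodic tent-folding appearing in the piecewise definition of $g$), this preimage is a disjoint union of open horizontal strips of width $2\sqrt 2$. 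Connectedness then forces $V_n$ to lie inside a single such strip for every $n$.

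Next I would extract measure growth from the Jacobian. A direct computation gives $J_g(x+iy)=\lambda\nu^2 e^{2\lambda x}\theta'(y)$, where $\theta(y)=\arg(a(y)+ib(y))$ satisfies $\theta'(y)^2=a'(y)^2+b'(y)^2\in[L^{-2},L^2]$, the lower bound coming from the bi-Lipschitz lower bound on the speed of the curve $t\mapsto\mathfrak{h}(t/\sqrt 2,t/\sqrt 2)$ on the sphere. Each width-$2\sqrt 2$ strip decomposes into two adjacent fundamental strips on which $g$ is a homeomorphism onto the right half-plane, so $g|_{V_n}$ is at most $2$-to-$1$. Combining these facts with $e^{2\lambda x}\geq 1$ on $V_n\subset\{\real z>0\}$, the area formula gives
\[
m(V_{n+1})\geq\tfrac{1}{2}\int_{V_n}J_g\,dm\geq\tfrac{\lambda\nu^2}{2L}\,m(V_n),\qquad n\geq 1.
\]
The hypothesis $\nu^2\lambda>2L$ is exactly what makes $C:=\lambda\nu^2/(2L)>1$, so $m(V_n)$ grows at a geometric rate.

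The crux is then to derive a uniform upper bound $m(V_n)\leq C_0=C_0(\nu,\lambda,L)<\infty$. Let $I_{n+1}=(4k_{n+1}\sqrt 2,(4k_{n+1}+2)\sqrt 2)$ be the strip containing $V_{n+1}$. The defining constraint $\nu e^{\lambda x}b(\phi(y))\in I_{n+1}$ pins the $x$-slice of $V_n$ at height $y$ to an interval of length at most $\tfrac{1}{\lambda}\log\bigl((4k_{n+1}+2)/(4k_{n+1})\bigr)\leq\tfrac{1}{\lambda}\log(3/2)$ when $k_{n+1}\geq 1$, and at most $\max\{0,\tfrac{1}{\lambda}\log(2\sqrt 2/(\nu b(\phi(y))))\}$ when $k_{n+1}=0$ (after imposing $x>0$). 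Integrating $y$ over the width-$2\sqrt 2$ strip containing $V_n$ (and using translation-invariance of $b\circ\phi$ to reduce to $(0,2\sqrt 2)$), one uses the local bi-Lipschitz bound $b(\phi(y))\gtrsim L^{-1}\min(y,2\sqrt 2-y)$ near the zeros of $b\circ\phi$ to see that the $k_{n+1}=0$ integrand has only an integrable logarithmic singularity. In both cases $m(V_n)\leq C_0$ independently of $n$, and combined with the geometric growth this yields $C^{n-1}m(V_1)\leq C_0$ for all $n$, an absurdity.

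The main obstacle is the integrability analysis underlying the uniform bound $C_0$: one must treat the two cases $k_{n+1}=0$ and $k_{n+1}\geq 1$ on a common footing, and in the former case explicitly control $\int\log(1/b(\phi(y)))\,dy$ using the bi-Lipschitz behaviour of $\mathfrak{h}$ near $(0,0,1)$ (the zero locus of $b\circ\phi$). The hypothesis $\nu^2\lambda>2L$ enters at precisely one place — balancing Jacobian expansion against the multiplicity-$2$ absorption in the growth step — which is what makes this approach essentially sharp.
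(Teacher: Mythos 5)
Your proposal follows the paper's argument essentially step for step: the Jacobian lower bound $|J_g|\geq\nu^2\lambda e^{2\lambda x}/L$ derived from the bi-Lipschitz speed of the spherical curve (the paper's Lemma~\ref{expand}, which you reformulate via $\theta'=\arg(a+ib)'$), the factor-$\tfrac12$ area growth $m(V_{n+1})\geq\tfrac{\nu^2\lambda}{2L}m(V_n)$ (the paper's Lemma~\ref{volume}), the confinement of each $V_n$ to a preimage slab of the next strip whose area you bound by integrating the admissible $x$-interval over $y$, and the use of the bi-Lipschitz lower bound $|\mathfrak h_1(y)|\gtrsim|y|/L$ near the zero of $b$ to show the $k=0$ slab (the paper's $A_0$) has a merely logarithmic, hence integrable, singularity. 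Your "slab area" computation is exactly the paper's $A_m=\tfrac{2\sqrt2}{\lambda}\log\tfrac{m+1}{m}$ and $A_0$, just organized as a direct bound on $m(V_n)$ rather than by naming the regions; this is the same proof.
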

		For the proof of this Theorem we will need several lemmas. Note here that since $\mathfrak{h}$ is a Lipschitz function it will also be differentiable almost everywhere. This implies that $g$ is differentiable almost everywhere.
		\begin{lemma}\label{expand}
			
			\[|\det (Dg(z))|\geq\frac{\nu^2\lambda e^{2\lambda\real(z)}}{L}\hspace{1mm}\text{a.e.}\]
		\end{lemma}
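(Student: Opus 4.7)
The plan is to compute $\det D\psi$ explicitly by direct differentiation, note that passing from $\psi$ to $g$ costs only a sign in the Jacobian (because the second case in the definition of $g$ is pre-composition with the orientation-reversing isometry $\sigma(z)=\bar z + 2\sqrt 2\,i$, which also preserves $\real z$), and then convert the resulting expression into a bi-Lipschitz estimate for $\mathfrak{h}$.

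In Cartesian coordinates, $\psi(x+iy) = \nu e^{\lambda x}\bigl(a(y) + i\,b(y)\bigr)$ differentiates to give
$$\det D\psi(x+iy) = \nu^2\lambda\,e^{2\lambda x}\bigl(a(y)b'(y) - a'(y)b(y)\bigr)$$
wherever the derivatives exist. Since $a^2+b^2\equiv 1$, writing $a=\cos\theta$, $b=\sin\theta$ for an absolutely continuous function $\theta$ shows that $ab'-a'b=\theta'$ and, separately, $\sqrt{a'^2+b'^2}=|\theta'|$. Hence the lemma reduces to the pointwise lower bound $|\theta'(y)|\geq 1/L$ a.e.

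This bound I would extract from the bi-Lipschitz hypothesis by looking at the curve $\gamma(y)=\mathfrak{h}(y/\sqrt 2,y/\sqrt 2) = (b(y)/\sqrt 2, b(y)/\sqrt 2, a(y))$, i.e.\ the image under $\mathfrak{h}$ of the unit-speed diagonal of the domain. A short calculation gives $|\gamma'(y)|^2 = a'(y)^2+b'(y)^2 = \theta'(y)^2$, so at every $y$ where $\mathfrak{h}$ is differentiable the lower bi-Lipschitz bound $\ell(D\mathfrak{h})\geq 1/L$ applied to the unit tangent $(1/\sqrt 2,1/\sqrt 2)$ of the diagonal yields $|\gamma'(y)|\geq 1/L$, as required.

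The step I would double-check is that the lower bound $\ell(D\mathfrak{h})\geq 1/L$ really holds for the \emph{extended} $\mathfrak{h}:\mathbb{R}^2\to\mathbb{R}^3$ appearing in $\gamma(y)$ for large $|y|$, since the introduction states only that the extension is Lipschitz. This is where I expect the only mild obstacle; however, the extension is built by successive reflections across the sides of $Q$, and on each reflected copy of $Q$ it coincides with a rigid reflection of the original $L$ bi-Lipschitz $\mathfrak{h}|_Q$, so the pointwise lower bound survives at every point where $\mathfrak{h}$ is differentiable, which is a full-measure set by Rademacher's theorem.
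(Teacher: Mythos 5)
Your proposal is correct and follows essentially the same route as the paper's proof: reduce to the strips where $g=\psi$ via the orientation-reversing isometry $T(z)=\bar z+2\sqrt2 i$, compute $\det D\psi=\nu^2\lambda e^{2\lambda x}(ab'-a'b)$, use $a^2+b^2\equiv 1$ to identify $|ab'-a'b|$ with $\sqrt{a'^2+b'^2}$, and then bound this below by $1/L$ via the bi-Lipschitz property of $\mathfrak{h}$ restricted to the diagonal. The only difference is stylistic — the paper obtains $|ab'-a'b|=\sqrt{a'^2+b'^2}$ by noting that $(a,b)$ is a unit vector orthogonal to $(a',b')$, whereas you parametrize by a phase $\theta$ and use $ab'-a'b=\theta'$; your closing remark about the reflective extension of $\mathfrak{h}$ is a correct (and welcome) piece of bookkeeping that the paper leaves implicit.
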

		\begin{proof}
			It is enough to find a lower bound for $\imag z\in\left[(4k-1)\sqrt2,(4k+1)\sqrt2\right]$. This is true because for other $z$ we have that $T(z)=\bar{z}+2\sqrt2 i$ has imaginary part in $\left[(4k'-1)\sqrt2,(4k'+1)\sqrt2\right]$ for some $k'\in\mathbb{Z}$ and thus for those $z$ we have $g(z)=g(T(z))=\psi(T(z))$.  Then by the chain rule we have that $Dg(z)=Dg(T(z))DT(z)$. Since $DT(z)=-1$ this implies that $\left|\det Dg(z)\right|=\left|\det Dg(T(z))\right|$.

			With than in mind, if $z=x+iy$ we have that $Dg(z)$ is the linear transformation induced by the matrix \[\begin{pmatrix}
				\nu\lambda e^{\lambda x}a(y)& \nu e^{\lambda x} \frac{da}{dy}({y})\\\\
				\nu\lambda e^{\lambda x}b(y)& \nu e^{\lambda x} \frac{db}{dy}(y)
			\end{pmatrix}=\begin{pmatrix}
				\nu\lambda e^{\lambda x}\mathfrak{h}_3(\frac{y}{\sqrt2})& \frac{\nu}{\sqrt2} e^{\lambda x} \frac{d\mathfrak{h}_3}{dy}(\frac{y}{\sqrt2})\\\\
				\nu\lambda\sqrt2 e^{\lambda x}\mathfrak{h}_1(\frac{y}{\sqrt2})& \nu e^{\lambda x} \frac{d\mathfrak{h}_1}{dy}(\frac{y}{\sqrt2})
			\end{pmatrix},\] $\hspace{1mm}\text{if}\hspace{1mm} y\in\left[(4k-1)\sqrt2,(4k+1)\sqrt2\right]$.
			
			Thus \begin{align*}|\det Dg(z)|&=\nu^2\lambda e^{2\lambda x}\left|\mathfrak{h}_3(y)\frac{d\mathfrak{h}_1}{dy}(y)-\mathfrak{h}_1(y)\frac{d\mathfrak{h}_3}{dy}(y)\right|\\&=\nu^2\lambda e^{2\lambda x}\left|\det\begin{pmatrix} a(y)& \frac{da}{dy}({y})\\\\
					b(y)& \frac{db}{dy}(y)\end{pmatrix}\right|, \hspace{1mm}\text{for}\hspace{1mm}y\in\left[(4k-1)\sqrt2,(4k+1)\sqrt2\right].\end{align*} 
			
			We now claim that  \[\left|\det\begin{pmatrix} a(y)& \frac{da}{dy}({y})\\\\
				b(y)& \frac{db}{dy}(y)\end{pmatrix}\right|>\frac{1}{L},\hspace{2mm}\text{a.e.}\]Hence \[|\det Dg(z)|\geq\frac{\nu^2\lambda e^{2\lambda x}}{L}\hspace{2mm}\text{a.e.}\] Indeed, because $a^2(y)+b^2(y)=1$ we get that the vectors $(a(y),b(y))$ and $(\frac{da}{dy},\frac{db}{dy})$ are orthogonal and thus so is their matrix. This implies that \[\left|\det\begin{pmatrix} a(y)& \frac{da}{dy}({y})\\\\
				b(y)& \frac{db}{dy}(y)\end{pmatrix}\right|=\left|\left(a(y),b(y)\right)\right|\left|\left(\frac{da}{dy}({y}),\frac{db}{dy}({y})\right)\right|=\left|\left(\frac{da}{dy}({y}),\frac{db}{dy}({y})\right)\right|.\]
			
			Now because $\mathfrak{h}$ is a locally bi-Lipschitz map almost everywhere we have that $$\left|\frac{d\mathfrak{h}}{dy}\left(\frac{y}{\sqrt2},\frac{y}{\sqrt2}\right)\right|\geq \frac{1}{L}\hspace{2mm}\text{a.e.}$$
			Since $\left|\left(\frac{da}{dy}({y}),\frac{db}{dy}({y})\right)\right|=\left|\frac{d\mathfrak{h}}{dy}\left(\frac{y}{\sqrt2},\frac{y}{\sqrt2}\right)\right|$ we get what we wanted.
			
		\end{proof}
		\begin{lemma}\label{volume}
			Let $\nu^2\lambda>2L$ where $\lambda\geq1$, $\nu>0$ and $V\subset \mathbb{C}$ be a connected subset of the complex plane with $m(V)>0$ and such that its iterates under $g$ do not intersect the real axis. Then $m(g^n(V))\to\infty$ as $n\to \infty$, where $m$ is the 2-dimensional Lebesgue measure.
		\end{lemma}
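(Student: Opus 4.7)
The plan is to derive the inductive estimate $m(V_{n+1}) \geq (\nu^2\lambda/(2L))\, m(V_n)$ for all $n \geq 1$, where $V_n := g^n(V)$. Since $\nu^2\lambda/(2L) > 1$ by hypothesis, iterating forces $m(V_n) \to \infty$. The key insight, which uses the connectedness of $V$ in an essential way, is that each $V_n$ must lie in a single vertical strip of width $2\sqrt{2}$, on which $g$ is at most $2$-to-$1$.

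First I would unpack the structure of $g$. Since $\mathfrak{h}$ is bi-Lipschitz and the symmetry $\mathfrak{h}_1(x_1,x_1)=\mathfrak{h}_2(x_1,x_1)$ forces $\mathfrak{h}$ restricted to the diagonal of $Q$ to be a homeomorphism onto a quarter of the great circle $\{\mathfrak{h}_1=\mathfrak{h}_2\}$ of the hemisphere, the function $b(y)=\sqrt{2}\,\mathfrak{h}_1(y/\sqrt{2},y/\sqrt{2})$ is a strictly monotonic homeomorphism from $[-\sqrt{2},\sqrt{2}]$ onto $[-1,1]$ with $b(0)=0$. Unwinding the piecewise definition of $g$, together with its $4\sqrt{2}i$-periodicity, then shows that $g^{-1}(\mathbb{R})$ equals the union of horizontal lines $\{\imag z = 2k\sqrt{2}: k\in\mathbb{Z}\}$, that the upper right quadrant $\{\real w>0,\,\imag w>0\}$ has preimage $\bigcup_{k\in\mathbb{Z}}\{\imag z\in(4k\sqrt{2},(4k+2)\sqrt{2})\}$, and that the lower right quadrant has preimage consisting of the complementary strips $\bigcup_{k\in\mathbb{Z}}\{\imag z\in((4k-2)\sqrt{2},4k\sqrt{2})\}$. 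On each such open strip, of width $2\sqrt{2}$, the map $g$ is exactly $2$-to-$1$ onto the corresponding quadrant, the two branches being $\psi$ and $\psi\circ T$.

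Second, I would run the induction. Since $V$ is connected, each $V_n = g^n(V)$ is connected. By hypothesis $V_{n+1}$ avoids the real axis, and since $V_{n+1}\subset g(\mathbb{C})=\{\real w>0\}$ we conclude $V_{n+1}$ lies in one of the two open right quadrants. Consequently $V_n\subset g^{-1}$(that quadrant), which is a disjoint union of open width-$2\sqrt{2}$ strips, forcing the connected set $V_n$ to lie in one such strip. The identical argument applied to $V_0=V$ (using that $V_1$ avoids the real axis) places $V$ in a single strip as well. On any such strip the multiplicity of $g$ is at most $2$, so the area formula and Lemma~\ref{expand} yield
\[m(V_{n+1}) = m(g(V_n)) \geq \tfrac{1}{2}\int_{V_n}|J_g(z)|\,dm(z) \geq \tfrac{\nu^2\lambda}{2L}\int_{V_n} e^{2\lambda\real z}\,dm(z).\]
For $n\geq 1$ we have $V_n\subset\{\real z\geq 0\}$, so the exponential factor is $\geq 1$ and the geometric estimate $m(V_{n+1})\geq(\nu^2\lambda/(2L))\,m(V_n)$ follows. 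For the base case, the same inequality with the exponential factor still gives $m(V_1)>0$ since $|J_g|$ is positive almost everywhere on $V$. Iterating produces $m(V_n)\to\infty$.

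The main obstacle I anticipate is Step~1: pinning down the preimages of the open right quadrants as precisely the disjoint width-$2\sqrt{2}$ strips. This needs the monotonicity of $b$, together with a careful unwinding of the two-branch definition of $g$ and its $4\sqrt{2}i$-periodicity to verify that the strip boundaries are exactly $g^{-1}(\mathbb{R})$ and that each strip is a $2$-to-$1$ image. Once this structural fact is established, the connectedness of $V_n$ does all the work and the inductive measure estimate is immediate.
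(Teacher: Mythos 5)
Your proof is correct and follows essentially the same route as the paper: observe that avoiding $\mathbb{R}$ forces each connected iterate $g^n(V)$ into a single horizontal strip of width $2\sqrt{2}$ (preimages of the quadrants), where $g$ has multiplicity $2$, and then combine this with the Jacobian lower bound of Lemma~\ref{expand} and $\real z\geq 0$ to get $m(g^{n+1}(V))\geq \tfrac{\nu^2\lambda}{2L}\,m(g^n(V))$. You simply spell out the strip/quadrant structure in more detail, while the paper normalizes by replacing $V$ with $g(V)$ at the start; the substance is identical.
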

		\begin{proof}
			We can assume that $V$ lies on the right half plane $\{z:\real (z)>0\}$ otherwise just consider $g(V)$ since $g$ maps $\mathbb{C}$ to the right half plane. We know that $m(g(V))=\int_{g(V)}dm$. Since none of the iterates of $V$ under $g$ intersects the real axis we have that those iterates  also do not intersect any of the pre-images of the real axis, meaning the lines $y=2\sqrt2k,$ $k\in~\mathbb{Z}$. Thus $g^n(V)$ is always inside  strips of the form $\{z\in\mathbb{C}:\imag z\in\left(2\sqrt2k,2\sqrt2(k+1)\right)\}$. In each of those strips it is easy to see, by what we have said in the first paragraph of this section, that $g$ is a two-to-one map. By using Lemma \ref{expand} and since $\real z>0$ for all $z\in V$  we now get \[\int_{g(V)}dm\geq\frac{1}{2}\int_V |\det (Dg)|dm\geq\frac{\nu^2\lambda}{2L} \int_V e^{2\lambda\real (z)}dm\geq \frac{\nu^2\lambda}{2L}m(V).\] This means that \[m(g(V))\geq \frac{\nu^2\lambda }{2L}m(V).\]
			Hence, since $\nu^2\lambda>2L$ if we iterate that inequality we have that $m(g^n(V))\to\infty$.
			
		\end{proof}
		\begin{figure}[H]
			\centering
			\includegraphics[scale=0.9]{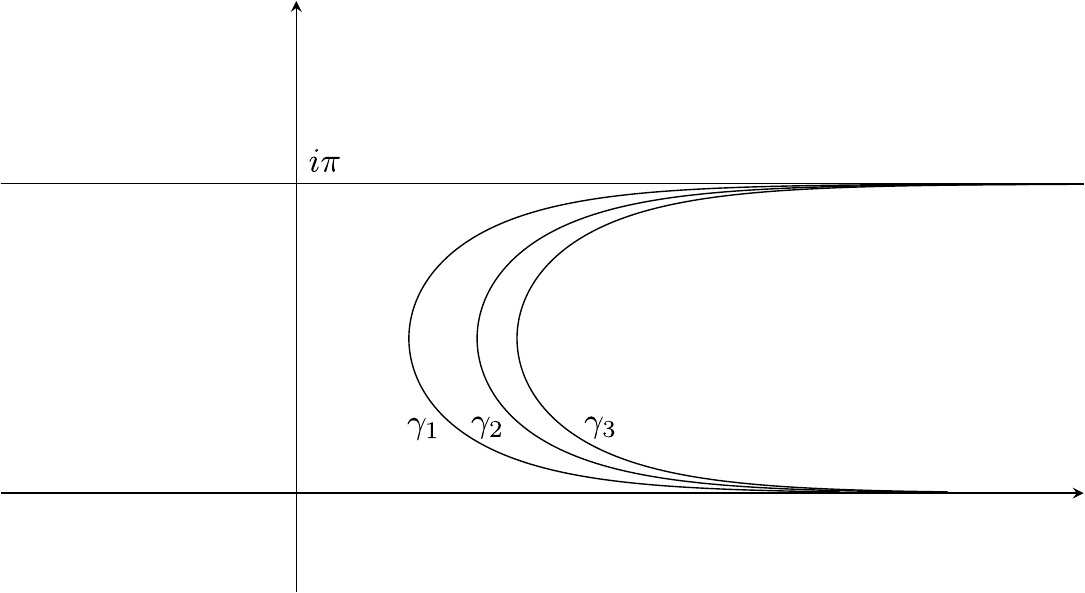}
			\caption{The pre-images of the lines $2k\pi i$, $k\in\mathbb{Z}\setminus\{0\}$ under the exponential map. The curves $\gamma_m$ in the proof of Theorem \ref{plane} have a similar structure. }
		\end{figure}
		\begin{proof}[Proof of Theorem \ref{plane}]
			
			Suppose, towards a contradiction, that there is a connected  set $V$ with $m(V)>0$ whose iterates never intersect the real axis. Then by Lemma \ref{volume} we have that $m(g^n(V))\to \infty$ as $n\to \infty$. Since $g^n(V)$ never intersects the real axis it also does not intersect its pre-images meaning the lines $\imag(z)=2\sqrt2k$, $k\in\mathbb{Z}$. This means that $g^n(V)$ stays always inside  strips of the form $\{ z\in\mathbb{C}:\imag(z)\in\left(2\sqrt2k,2\sqrt2(k+1)\right),k\in\mathbb{Z}\}$. If we now take the pre-image of the lines $\imag(z)=2\sqrt2m$, $m\in\mathbb{Z}\setminus\{0\}$, that lie inside all those strips we get curves $\gamma_m$ which the iterates $g^n(V)$ of our set must not cross. By symmetry we can confine ourselves in the strip $$S:=\Big\{ z\in\mathbb{C}:\imag(z)\in\left(0,2\sqrt2\right)\Big\}.$$ 
			
			From now on let us write $\mathfrak{h}_i(y)$ instead of $\mathfrak{h}_i(y,y)$, $i=1,2,3$ for simplicity. 
			We have two cases to consider now. Either $S$ contains the pre-images of the lines $\imag(z)=2\sqrt2m$, $m>0$, in which case $\mathfrak{h}_1(y)>0$ for $y>0$ or $S$ contains the pre-images of the lines for $m<0$ in which case $\mathfrak{h}_1(y)<0$ for $y>0$. We will only consider the first case here. The second one can be dealt similarly. So suppose $m>0$. We claim that the curves $\gamma_m$ partition the strip $S$ in sets of uniformly bounded area. In fact if we name $A_m$ the area of the set defined by $\gamma_m$ and $\gamma_{m+1}$  and we name $A_0$ the area inside the strip between the imaginary axis and $\gamma_1$, then we claim that $A_m$ is an eventually decreasing sequence. Clearly, since   $m(g^n(V))\to \infty$ and $g^n(V)$ must stay inside those sets $A_m$ this is impossible. 
			
			In order to prove those claims note that the curves $\gamma_m$ are given by the equations $\nu e^{\lambda x}\mathfrak{h}_1\left(\frac{y}{\sqrt2}\right)=2m,$ when $y\in\left(0,\sqrt2\right]$ and $\nu e^{\lambda x}\mathfrak{h}_1\left(\frac{2\sqrt2-y}{\sqrt2}\right)=2m$, when $y\in\left[\sqrt2,2\sqrt2\right)$. It is also easy to see that those curves do not have self intersections and do not intersect with each other.  The area we are looking for will be given by \[A_m=\int_{0}^{\sqrt2}\frac{1}{\lambda}\left(\log\frac{2(m+1)}{\nu \mathfrak{h}_1\left(\frac{y}{\sqrt2}\right)}-\log\frac{2m}{\nu \mathfrak{h}_1\left(\frac{y}{\sqrt2}\right)}\right)dy+\int_{\sqrt2}^{2\sqrt2}\frac{1}{\lambda}\left(\log\frac{2(m+1)}{\nu \mathfrak{h}_1\left(\frac{2\sqrt2-y}{\sqrt2}\right)}-\log\frac{2m}{\nu \mathfrak{h}_1\left(\frac{2\sqrt2-y}{\sqrt2}\right)}\right)dy.\] Thus $A_m=\frac{2\sqrt2}{\lambda}\log\frac{m+1}{m}$ which proves what we wanted. We also need to find $A_0$ for which it is true that  \[A_0=\int_{0}^{\sqrt2}\frac{1}{\lambda}\log \frac{2}{\nu \mathfrak{h}_1\left(\frac{y}{\sqrt2}\right)}dy+\int_{\sqrt2}^{2\sqrt2}\frac{1}{\lambda}\log\frac{2}{\nu \mathfrak{h}_1\left(\frac{2\sqrt2-y}{\sqrt2}\right)}dy=2\int_{0}^{\sqrt2}\frac{1}{\lambda}\log \frac{2}{\nu \mathfrak{h}_1\left(\frac{y}{\sqrt2}\right)}dy,\] if $\nu \mathfrak{h}_1\left(\frac{y}{\sqrt2}\right)=2$ has no solution. If this equation has solutions then, although we can find the area again we do not need to since $A_0$ will be even smaller in this case (see figure \ref{figure42}).
			
			\begin{figure}
				\centering
				\includegraphics[scale=0.2]{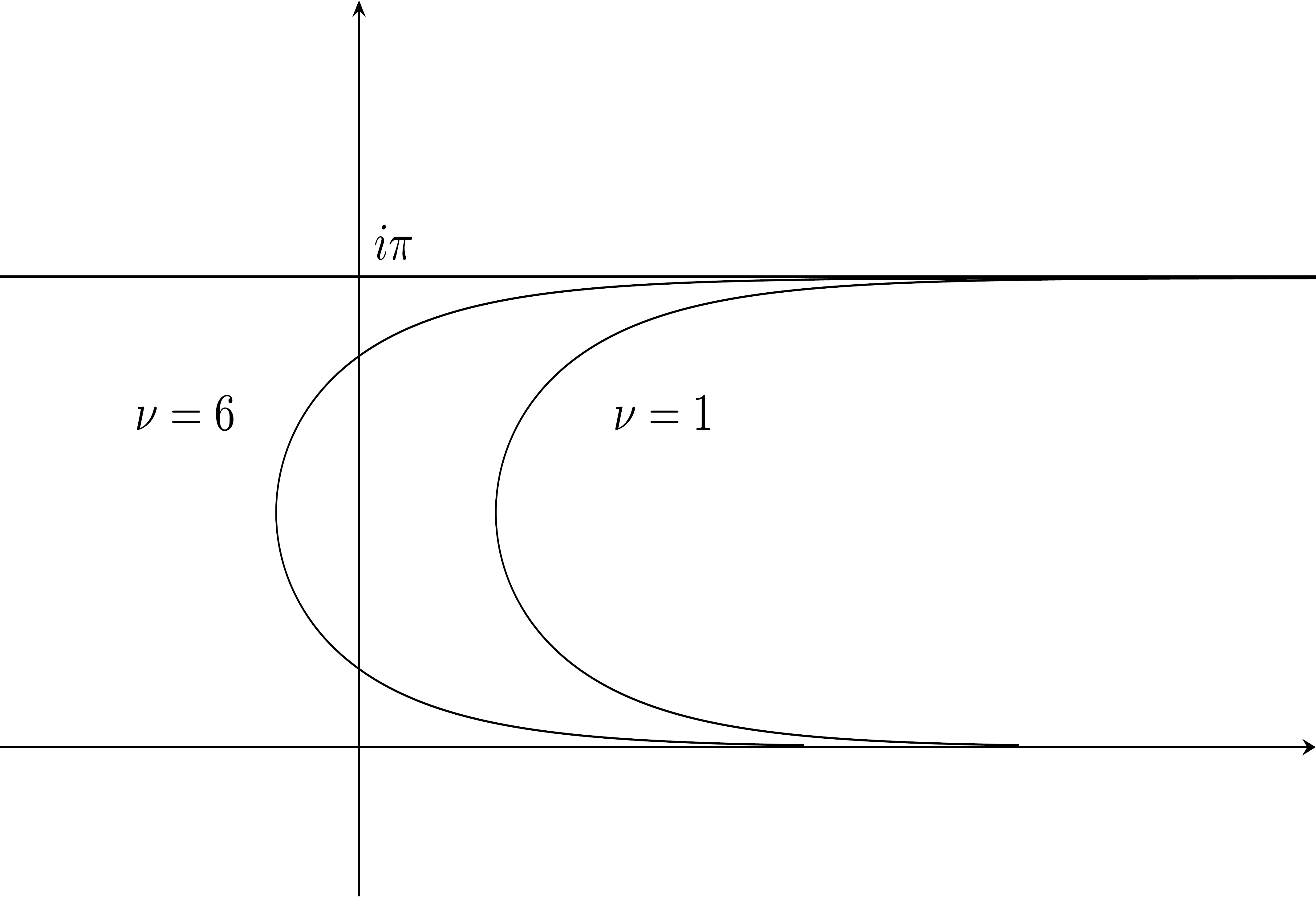}
				\caption{The curve $\gamma_0$ for the exponential map for two different values of $\nu$. The situation is similar with our maps as well.  }
				\label{figure42}
			\end{figure}
			Notice that because $(\mathfrak{h}_1,\mathfrak{h}_2,\mathfrak{h}_3)$ is always a point on the unit sphere we have that $$\mathfrak{h}_1(x,y)^2+\mathfrak{h}_2(x,y)^2=\sin^2 \theta|\mathfrak{h}(x,y)-\mathfrak{h}(0,0)|^2=\sin^2 \theta \left(\mathfrak{h}_1(x,y)^2+\mathfrak{h}_2(x,y)^2+(\mathfrak{h}_3(x,y)-1)^2\right),$$ where $\theta$ is the angle between the $x_3$-axis and the segment that connects $(0,0,1)$ with $(\mathfrak{h}_1,\mathfrak{h}_2,\mathfrak{h}_3)$. Taking $x=y$, the fact that $\mathfrak{h}$ is  bi-Lipschitz on $[-1,1]^2$ and noticing that $\theta \geq \pi/4$ gives us $|\mathfrak{h}_1(y)|\geq \frac{|y|}{\sqrt2L}.$ Hence because $\mathfrak{h}_1(y)>0$ for $y\in(0,1)$ we have \[A_0\leq \frac{2}{\lambda}\int_{0}^{\sqrt2}\log\left(\frac{4L}{\nu y}\right)dy,\] which is finite.
		\end{proof}

		\section{Proof of Theorem \ref{main}}\label{section 4}
		Having proved Theorem \ref{plane} we are now ready to proceed to our main theorem.
		
		First we prove some lemmas that we will later need. Note here that since we proved that the planes $x_1=\pm x_2$ are in the Julia set of $\mathcal{Z}_{\nu}$ we will also know that all their inverse images are in the Julia set. Those inverse images are again planes of the form $x_1=\pm x_2 + 2\lambda k$, where $k\in \mathbb{Z}$. Those planes partition $\mathbb{R}^3$ in square beams. Name $B_{(0,0)}$ the open rectangle beam that is the union of the two square beams that touch the $x_3$-axis and are in the half-space $x_2\leq x_1$. We can partition the space now in rectangle  beams that are translates of this $B_{(0,0)}$. Let us name them \[B_{(i,j)}=B_{(0,0)}+i(2\lambda,2\lambda,0)+j(\lambda,-\lambda,0), \hspace{2mm}i,j\in\mathbb{Z}.\]
		Note that the map $\mathcal{Z}_{\nu}$ is a homeomorphism in those rectangle beams. The next lemma is inspired by the one Misiurewicz used in his proof (compare \cite[Lemma 1]{Misiu}) and is the main reason that we need the scale factor $\lambda$ in our definition of the Zorich map. It will be convenient to introduce some notation. Let $p:\mathbb{R}^3\to \mathbb{R}^2$ be the projection map defined by $p(x_1,x_2,x_3)=(x_1,x_2)$. Also let $p_3(x)$, $x\in\mathbb{R}^3$ denote the third coordinate of $x$, in other words $p_3(x_1,x_2,x_3)=x_3$. 
		\begin{lemma}\label{dete}If $\lambda$ and $L$ are the numbers we used in the construction of the Zorich map and $\nu>0$ then
			\[\det\left(D\mathcal{Z}_{\nu}^n(x)\right)\geq {\left(\frac{\lambda }{L^5}\right)}^{n}\frac{1}{\lambda^{3}}\left|\left( p \circ \mathcal{Z}_{\nu}^{n}\right)(x)\right|^3\hspace{2mm}\text{a.e.}\]
			
		\end{lemma}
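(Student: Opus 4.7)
The plan is to induct on $n$. For the base case $n=1$, I would differentiate \eqref{zorichdef} directly: all three columns of $D\mathcal{Z}_{\nu}$ carry the common factor $\nu e^{x_{3}}$, with the third column being $\nu e^{x_{3}}h(x_{1},x_{2})$, so that
\[
\det D\mathcal{Z}_{\nu}(x)=\nu^{3}e^{3x_{3}}\det[\partial_{1}h,\partial_{2}h,h](x_{1},x_{2}),
\]
where $[\partial_{1}h,\partial_{2}h,h]$ denotes the $3\times 3$ matrix with those column vectors. Because the extended $\mathfrak{h}$ still takes values on the unit sphere, $|h|\equiv\lambda$ on $\mathbb{R}^{2}$, so $h(x_{1},x_{2})$ is normal to the image surface and parallel to $\partial_{1}h\times\partial_{2}h$. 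The triple product therefore satisfies $|\det[\partial_{1}h,\partial_{2}h,h]|=\lambda\,|\partial_{1}h\times\partial_{2}h|$, and sense-preservation of $\mathcal{Z}_{\nu}$ removes the absolute value. The $L$-bi-Lipschitz property of $\mathfrak{h}$, inherited by $h$ under both the rescaling and the reflection extensions, gives $|\partial_{1}h\times\partial_{2}h|\geq 1/L^{2}$ a.e.; combined with $(h_{1}^{2}+h_{2}^{2})^{3/2}\leq\lambda^{3}$, this proves the inequality for $n=1$ with plenty of room to spare.

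For the inductive step I would set $y:=\mathcal{Z}_{\nu}^{n}(x)$ and invoke the chain rule
\[
\det D\mathcal{Z}_{\nu}^{n+1}(x)=\det D\mathcal{Z}_{\nu}(y)\cdot\det D\mathcal{Z}_{\nu}^{n}(x).
\]
The first factor equals $\nu^{3}e^{3y_{3}}\lambda\,|\partial_{1}h\times\partial_{2}h|(y_{1},y_{2})$ by the base-case computation applied at $y$, and the second is at least $(\lambda/L^{5})^{n}\lambda^{-3}(y_{1}^{2}+y_{2}^{2})^{3/2}$ by the induction hypothesis. Matching the product against the target
\[
(\lambda/L^{5})^{n+1}\lambda^{-3}|p\circ\mathcal{Z}_{\nu}^{n+1}(x)|^{3}=(\lambda/L^{5})^{n+1}\lambda^{-3}\nu^{3}e^{3y_{3}}(h_{1}^{2}+h_{2}^{2})^{3/2}(y_{1},y_{2})
\]
reduces the induction to the pointwise inequality
\[
L^{5}\,|\partial_{1}h\times\partial_{2}h|(y_{1},y_{2})\,(y_{1}^{2}+y_{2}^{2})^{3/2}\geq(h_{1}^{2}+h_{2}^{2})^{3/2}(y_{1},y_{2}).
\]

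The closing ingredient is a Lipschitz anchor at the origin: since $\mathfrak{h}(0,0)=(0,0,1)$ we have $h(0,0)=(0,0,\lambda)$, and the extended $h$ being globally Lipschitz with constant $L$ on $\mathbb{R}^{2}$ yields
\[
\sqrt{h_{1}^{2}+h_{2}^{2}}(y_{1},y_{2})\leq L\sqrt{y_{1}^{2}+y_{2}^{2}}\quad\text{for every }(y_{1},y_{2})\in\mathbb{R}^{2}.
\]
Plugging this upper bound together with $|\partial_{1}h\times\partial_{2}h|\geq 1/L^{2}$ into the required pointwise inequality gives $L^{5}\cdot L^{-2}\geq L^{3}$, which holds with equality and closes the induction. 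The main subtlety I expect to grapple with is confirming that the reflection extension procedure simultaneously preserves $|\mathfrak{h}|\equiv 1$ on all of $\mathbb{R}^{2}$ (without which the orthogonality identity fails on the reflected beams) and the bi-Lipschitz constant $L$; both are asserted in the construction, and once these are in hand the constant $L^{5}$ appears exactly as the tight balance between the Lipschitz exponent $L^{3}$ controlling $(h_{1}^{2}+h_{2}^{2})^{3/2}$ and the lower Jacobian bound $L^{-2}$ for $|\partial_{1}h\times\partial_{2}h|$.
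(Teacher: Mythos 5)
Your proof is correct and rests on the same two ingredients as the paper's argument: the lower bound $\det D\mathcal{Z}_{\nu}(x)=\nu^{3}e^{3x_{3}}\lambda\,|\partial_{1}h\times\partial_{2}h|\geq \nu^{3}e^{3x_{3}}\lambda/L^{2}$ (via orthogonality of $h$ to its tangent plane and the bi-Lipschitz bound), and the anchored Lipschitz estimate $|p(h(y))|\leq L|y|$. The paper telescopes the second estimate into a closed-form bound on $|p(\mathcal{Z}_{\nu}^{n}(x))|^{3}$ and then compares with the chain-rule product of Jacobian lower bounds, whereas you repackage exactly that comparison as an induction on $n$; the two are mathematically equivalent.
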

		\begin{proof}
			First note that $|p(\mathcal{Z}_{\nu}^n(x))|=\nu e^{p_3(\mathcal{Z}_{\nu}^{n-1})}\cdot |(p\circ h\circ p\circ\mathcal{Z}_{\nu}^{n-1})(x) |$. Also,	using the fact that $h(0)=(0,0,\lambda)$ and $h(x_1,x_2)=(h_1(x_1,x_2),h_2(x_1,x_2),h_3(x_1,x_2))$ is a Lipschitz map we have that \[|p(h(x))|=\left|p\left(h(x)-h(0)\right)\right|\leq|h(x)-h(0)|\leq L|x|. \]
			Hence
			\begin{align}\label{eq10}
				|p(\mathcal{Z}_{\nu}^n(x))|=&\nu e^{p_3(\mathcal{Z}_{\nu}^{n-1})}\left|\left(p\circ h\circ p \circ \mathcal{Z}_{\nu}^{n-1}\right)(x)\right|\nonumber\\\leq& L\nu e^{p_3(\mathcal{Z}_{\nu}^{n-1})}\left|\left( p \circ \mathcal{Z}_{\nu}^{n-1}\right)(x)\right|\leq\cdots\nonumber\\\leq &(\nu L)^{n}e^{p_3(\mathcal{Z}_{\nu}^{n-1})}e^{p_3(\mathcal{Z}_{\nu}^{n-2})}\cdots e^{x_3}\sqrt{h_1^2\Big (x_1,x_2\Big )+h_2^2\Big (x_1,x_2\Big )}\nonumber\\\leq& \lambda (\nu L)^{n}e^{p_3(\mathcal{Z}_{\nu}^{n-1})}e^{p_3(\mathcal{Z}_{\nu}^{n-2})}\cdots e^{x_3}.
			\end{align}
			From the chain rule we know that  \begin{equation}\label{chainrule}\det\left(D\mathcal{Z}_{\nu}^n(x)\right)=\prod_{k=0}^{n-1}\det\left(D\mathcal{Z}_{\nu}\left({\mathcal{Z}_{\nu}^{k}(x)}\right)\right).\end{equation}
			Now from the definition of $\mathcal{Z}_{\nu}$ we get \begin{equation}\label{eq1234}\det(D\mathcal{Z}_{\nu}(x))=\nu^3 e^{3x_3}\det H,\end{equation}
			where $$H=\begin{pmatrix}
				\frac{\partial h_1}{\partial x_1}(p(x))&\frac{\partial h_1}{\partial x_2}(p(x))&h_1(p(x))\\\\\frac{\partial h_2}{\partial x_1}(p(x))&\frac{\partial h_2}{\partial x_2}(p(x))&h_2(p(x))\\\\\frac{\partial h_3}{\partial x_1}(p(x))&\frac{\partial h_3}{\partial x_2}(p(x))&h_3(p(x))
			\end{pmatrix}.$$

			We now set \[A=\begin{pmatrix}
				\frac{\partial h_1}{\partial x_1}\left(p(x)\right)\\\\\frac{\partial h_2}{\partial x_1}\left(p(x)\right)\\\\\frac{\partial h_3}{\partial x_1}\left(p(x)\right)
			\end{pmatrix},\hspace{2mm} B=\begin{pmatrix}
				\frac{\partial h_1}{\partial x_2}\left(p(x)\right)\\\\\frac{\partial h_2}{\partial x_2}\left(p(x)\right)\\\\\frac{\partial h_3}{\partial x_2}\left(p(x)\right)\end{pmatrix},\hspace{2mm} C=\begin{pmatrix}
				h_1\left(p(x)\right)\\\\h_2\left(p(x)\right)\\\\h_3\left(p(x)\right)
			\end{pmatrix}.
			\]
			Recall now that from linear algebra, the determinant of a matrix equals the scalar triple product. This means that \[\det H=\big\langle A\times B, C\big\rangle,\] where $\langle \cdot,\cdot\rangle $ denotes the euclidean inner product.
			Since $\mathcal{Z_{\nu}}$ is sense preserving we will have that $\det H>0$ and since $A$ and $B$ are orthogonal to $C$ we will have that $A\times B$ is parallel to $C$. Remember that $|C|=\lambda$ so \begin{equation}\label{eq201}\det H= \lambda|A\times B| \Bigg\langle \frac{A\times B}{|A\times B|}, \frac{C}{|C|}\Bigg\rangle= \lambda |A\times B|.\end{equation}
			Now because $\mathfrak{h}$ is a locally bi-Lipschitz map we have  that \[\left|h\left(p(x)+tv\right)-h\left(p(x)\right)\right|\geq\frac{|tv|}{L},\] for all small $t>0$ where $v=(v_1,v_2)\in\mathbb{R}^2$. This implies that  \begin{equation}\label{eq123}\left|Dh\left(p(x)\right)(v)\right|\geq  \frac{|v|}{L}\end{equation} and if we set $v=\left(|B|,\frac{-\langle A,B\rangle}{|B|}\right)$ and square both sides we get \[\left| |B|A-\frac{\langle A,B\rangle}{|B|}B\right|^2\geq \frac{1}{L^2}\left(|B|^2+\frac{\langle A,B\rangle^2}{|B|^2}\right)\geq\frac{|B|^2}{L^2} .\] Simplifying we get 
			\[|A|^2|B|^2-\langle A,B\rangle^2\geq\frac{|B|^2}{L^2}. \]
			Now note that by elementary properties of the cross product $|A\times B|^2=|A|^2|B|^2-\langle A,B\rangle^2$ and thus $$|A\times B|^2\geq \frac{|B|^2}{L^2}\geq \frac{1}{L^4},$$ where the last inequality comes from (\ref{eq123}) for $v=(0,1)$. Hence, (\ref{eq201}) becomes $\det H\geq \frac{\lambda}{L^2}$.  
			Putting everything together in (\ref{eq1234}) we get  \begin{equation}\label{eqdeter}\det\left(D\mathcal{Z}_{\nu}(x)\right)\geq \nu^3 e^{3x_3}\frac{\lambda}{L^2},\hspace{2mm}\text{a.e.}\end{equation} 
			
			Hence, by (\ref{eq10}) and (\ref{eqdeter}) we have that \begin{align*}|p(\mathcal{Z}_{\nu}^n(x))|^3\leq&\lambda^3 (L\nu)^{3n}e^{3p_3(\mathcal{Z}_{\nu}^{n-1})}e^{3p_3(\mathcal{Z}_{\nu}^{n-2})}\cdots e^{3x_3}\\\leq&\frac{\lambda^3 L^{5n}}{\lambda^{n}}\det\left(D\mathcal{Z}_{\nu}\left(\mathcal{Z}_{\nu}^{n-1}(x)\right)\right)\cdots\det(D\mathcal{Z}_{\nu}(x)) \hspace{2mm}\text{a.e}.\end{align*} By rearranging and (\ref{chainrule}) now we get the desired inequality.
		\end{proof}
		The next lemma describes the behaviour of points near the $x_3$-axis under iteration.
		\begin{lemma}\label{upup}
			Let $\nu\lambda>\frac{1}{e}.$ 
			\begin{enumerate}
				
				\item[(a)] There are $\delta>0$ and $c>0$ such that if $x\in C_{\delta}$, where $C_{\delta}$ is the cylinder around the $x_3$-axis with $\delta$ radius, then $p_3(\mathcal{Z}_{\nu}(x))>p_3(x)+c$,
				\item[(b)]  For $\delta$ as in (a) and for every $x\in C_{\delta}$, with $p(x)\not=(0,0)$, there is an $n\in\mathbb{N}$ such that $\mathcal{Z}_{\nu}^n(x)\not\in C_{\delta}$.
			\end{enumerate}
			
		\end{lemma}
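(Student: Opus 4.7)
The plan is to handle the two parts separately. Part (a) is a direct one-step computation, while part (b) will follow by iterating (a) together with a second bi-Lipschitz lower bound on the horizontal components.

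For (a), the key observation is that when $\delta<\lambda$ we have $C_{\delta}\subset\lambda Q\times\mathbb{R}$, so the explicit formula $\mathcal{Z}_{\nu}(x)=\nu e^{x_3}h(p(x))$ applies directly. Since $h(0,0)=(0,0,\lambda)$ and $h$ is $L$-Lipschitz, for $x\in C_{\delta}$ we obtain $h_3(p(x))\geq\lambda-L\delta$, and therefore $p_3(\mathcal{Z}_{\nu}(x))-x_3\geq\nu(\lambda-L\delta)e^{x_3}-x_3$. Minimising the right-hand side over $x_3\in\mathbb{R}$ gives the value $1+\log(\nu(\lambda-L\delta))$, which is strictly positive whenever $\nu(\lambda-L\delta)>1/e$. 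Since $\nu\lambda>1/e$ by hypothesis, choosing $\delta$ sufficiently small accomplishes this, and $c$ is taken to be that minimum.

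For (b), the key additional ingredient is a lower bound on $|p(h(p))|$ in terms of $|p|$. The plan is to combine the bi-Lipschitz lower bound $|h(p)-(0,0,\lambda)|^2\geq|p|^2/L^2$ with the spherical identity $h_1(p)^2+h_2(p)^2=(\lambda-h_3(p))(\lambda+h_3(p))$, since $h$ lands on the sphere of radius $\lambda$. A short rearrangement yields $\lambda-h_3(p)\geq|p|^2/(2\lambda L^2)$, and then, using $h_3\geq 0$ (automatic on $C_{\delta}$ for $\delta<\lambda$), one concludes $|p(h(p))|\geq|p|/(\sqrt{2}L)$. This produces the horizontal expansion bound $|p(\mathcal{Z}_{\nu}(x))|\geq \nu e^{x_3}|p(x)|/(\sqrt{2}L)$.

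The proof of (b) is then by contradiction. Suppose $\mathcal{Z}_{\nu}^n(x)\in C_{\delta}$ for every $n\geq 0$. By iterating part (a), $p_3(\mathcal{Z}_{\nu}^n(x))\geq x_3+nc\to\infty$. Iterating the horizontal bound, and noting that it propagates $p(\mathcal{Z}_{\nu}^n(x))\neq 0$ from $p(x)\neq 0$, yields
\[|p(\mathcal{Z}_{\nu}^n(x))|\geq\left(\frac{\nu}{\sqrt{2}L}\right)^n\exp\!\left(\sum_{k=0}^{n-1}p_3(\mathcal{Z}_{\nu}^k(x))\right)|p(x)|.\]
Because the exponent grows (in fact, quadratically) in $n$, $|p(\mathcal{Z}_{\nu}^n(x))|\to\infty$, contradicting $|p(\mathcal{Z}_{\nu}^n(x))|\leq\delta$. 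The step I expect to be the main obstacle is the horizontal lower bound used in (b): turning the three-dimensional bi-Lipschitz inequality into a two-dimensional estimate on $p(h(p))$ requires the sphericity of $h$, and one must verify that we are in the canonical cell $\{h_3\geq 0\}$, which is automatic inside a sufficiently thin cylinder around the $x_3$-axis.
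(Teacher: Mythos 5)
Your proposal is correct and takes essentially the same approach as the paper: part (a) minimises $\nu(\lambda-\varepsilon)e^t - t$ after bounding $h_3$ near $\lambda$, and part (b) combines the monotone growth of $p_3$ with the horizontal lower bound $|p(h(y))|\geq |y|/(\sqrt2 L)$ to force $|p(\mathcal{Z}_\nu^n(x))|\to\infty$. The only cosmetic difference is that you derive that horizontal bound algebraically from sphericity plus bi-Lipschitz, whereas the paper phrases it geometrically via the chord angle $\theta\geq\pi/4$; the two arguments are equivalent.
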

		\begin{proof}
			(a) We have if $h(x_1,x_2)=(h_1(x_1,x_2),h_2(x_1,x_2),h_3(x_1,x_2))$ then $p_3(\mathcal{Z}_{\nu}(x))=\nu e^{x_3}h_3(x_1,x_2)$. Now since $h(0,0)=(0,0,\lambda)$ and $h$ is continuous, for all $\varepsilon>0$ there is a disk $D=D(0,\delta)$ of radius $\delta>0$ on which  we have $h_3(x_1,x_2)>\lambda-\varepsilon$. Hence if $x=(x_1,x_2,x_3)\in C_{\delta}=D\times\mathbb{R}$, then \[p_3(\mathcal{Z}_{\nu}(x))=\nu e^{x_3}h_3(x_1,x_2)> \nu e^{p_3(x)}(\lambda-\varepsilon)\geq p_3(x)+1+\log \left(\nu(\lambda-\varepsilon)\right),\] where the last inequality follows by minimizing $\nu e^t(\lambda-\varepsilon)-t$. Now notice that since $\nu\lambda>\frac{1}{e}$ we can find a small enough $\varepsilon>0$ such that $\nu(\lambda-\varepsilon)>\frac{1}{e}$, which implies $1+\log(\nu(\lambda-\varepsilon))>0$. Hence, $p_3(\mathcal{Z}_{\nu}(x))>p_3(x)+c$ with $c= 1+\log(\nu(\lambda-\varepsilon))$.\\\\
			(b)   For a $\delta$ as in (a) and $\delta<\lambda$ now assume that there is a point $x\in C_{\delta}$ such that $p(x)\not=0$ and $\mathcal{Z}_{\nu}^n(x)\in C_{\delta}$ for all $n\in\mathbb{N}$. Then according to (a) we would have that $p_3(\mathcal{Z}_{\nu}^n(x))\to \infty$ when $n\to \infty$. We know that \begin{equation}\label{one}|(p\circ \mathcal{Z}_{\nu}^{n+1})(x)|=e^{p_3(\mathcal{Z}_{\nu}^n)}|\left(p\circ h\circ p\circ\mathcal{Z}_{\nu}^n\right)(x)|.\end{equation}
			Now its a simple geometric fact that for each $y\in[-\lambda,\lambda]^2$ \begin{equation}\label{two}|\left(p\circ h\right)(y)|=\sin \theta \left|h(y)-h(0)\right|,\end{equation} where $\theta$ is the angle between the line segment joining the point $h(y)$, on the sphere, with the point $(0,0,\lambda)$ and the $x_3$-axis. Moreover, $\theta\geq \pi/4$ for all such $y$. Also by the fact that $h$ is a bi-Lipschitz function we have that $|h(y)-h(0)|\geq \frac{|y|}{L}.$

			Now taking $y=p\left(\mathcal{Z}_{\nu}^n(x)\right)$ and combining this with (\ref{one}) and (\ref{two}) implies that \[|(p\circ \mathcal{Z}_{\nu}^{n+1})(x)|\geq \frac{e^{p_3(\mathcal{Z}^n(x))}}{\sqrt2 L}|(p\circ\mathcal{Z}_{\nu}^n)(x)|.\] Thus, since $p_3(\mathcal{Z}_{\nu}^n(x))\to \infty$, for all large enough $n$  we can say that \[|(p\circ\mathcal{Z}_{\nu}^{n+1})(x)|\geq2|(p\circ\mathcal{Z}_{\nu}^n)(x)|.\] This of course contradicts the fact that $\mathcal{Z}^n(x)\in C_{\delta}$ for all $n\in\mathbb{N}$.
		\end{proof}
		The next  lemmas describe how sets of positive measure behave under iteration by the Zorich map assuming that their iterates never cross the planes that we already know  belong in the Julia set.
		\begin{lemma}\label{metro}
			Assume that $\lambda>L^{5}$.	Let $V\subset\mathbb{R}^3$ be a connected set with $m(V)>0$ and whose iterates do  not intersect any of the planes $x_1=\pm x_2 + 2\lambda k$, where $k\in\mathbb{Z}$. Suppose also that there is sequence of integers $n_j>0$ with $\mathcal{Z}_{\nu}^{n_j}(V)\cap C_a=\emptyset$, where $C_a$ is a cylinder around the $x_3$-axis of any radius $a>0$. Then $m(\mathcal{Z}_{\nu}^{n_j}(V))\to\infty$ as $n_j\to \infty$,  where $m$ is the 3 dimensional Lebesgue measure.
		\end{lemma}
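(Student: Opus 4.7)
The plan is to combine Lemma \ref{dete} with the change-of-variables formula on $V$, using the hypothesis that $\mathcal{Z}_\nu^{n_j}(V)$ stays outside the cylinder $C_a$ to bound the factor $|(p\circ\mathcal{Z}_\nu^{n_j})(x)|^3$ from below.

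First I would observe that the hypothesis on the iterates of $V$ not crossing the planes $x_1=\pm x_2+2\lambda k$ implies that, for each $k\geq 0$, the set $\mathcal{Z}_\nu^{k}(V)$ lies in a single rectangle beam $B_{(i_k,j_k)}$. Since $\mathcal{Z}_\nu$ is a homeomorphism on each such beam, composing these injections shows that $\mathcal{Z}_\nu^{n_j}$ restricted to $V$ is injective. Consequently, the standard change-of-variables identity for quasiregular maps (the identity $\int_V J_f\,dm=\int_{\mathbb{R}^3} N(f,V,y)\,dm(y)$, specialised to the injective case) gives
\[
m(\mathcal{Z}_\nu^{n_j}(V))=\int_V \det(D\mathcal{Z}_\nu^{n_j}(x))\,dm(x).
\]

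Next I would invoke Lemma \ref{dete} to replace the integrand by its lower bound:
\[
m(\mathcal{Z}_\nu^{n_j}(V))\;\geq\;\left(\frac{\lambda}{L^{5}}\right)^{n_j}\frac{1}{\lambda^{3}}\int_V \bigl|(p\circ\mathcal{Z}_\nu^{n_j})(x)\bigr|^3\,dm(x).
\]
Now I would use the second hypothesis: since $\mathcal{Z}_\nu^{n_j}(V)\cap C_a=\emptyset$, every point $y\in\mathcal{Z}_\nu^{n_j}(V)$ satisfies $|p(y)|\geq a$. Applied pointwise to $y=\mathcal{Z}_\nu^{n_j}(x)$ for $x\in V$, this yields $|(p\circ\mathcal{Z}_\nu^{n_j})(x)|\geq a$, hence
\[
m(\mathcal{Z}_\nu^{n_j}(V))\;\geq\;\left(\frac{\lambda}{L^{5}}\right)^{n_j}\frac{a^{3}}{\lambda^{3}}\,m(V).
\]

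Finally, since $\lambda>L^{5}$ by assumption, the factor $(\lambda/L^{5})^{n_j}$ tends to infinity as $n_j\to\infty$, and $m(V)>0$, so $m(\mathcal{Z}_\nu^{n_j}(V))\to\infty$. The only delicate point is the injectivity justifying the change of variables; it is the reason one must observe that the hypothesis on the planes confines each iterate $\mathcal{Z}_\nu^{k}(V)$ to a single beam $B_{(i_k,j_k)}$, so that at every step of the composition $\mathcal{Z}_\nu$ acts injectively. Everything else is a direct substitution of the previously established bounds.
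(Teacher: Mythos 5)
Your proof is correct and follows essentially the same approach as the paper: confine iterates to beams to get injectivity, apply the change-of-variables formula, and combine Lemma \ref{dete} with the lower bound $|(p\circ\mathcal{Z}_\nu^{n_j})(x)|\geq a$ coming from the cylinder hypothesis. The only difference is cosmetic ordering of the steps.
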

		\begin{proof}
			
			Since  $\mathcal{Z}_{\nu}^{n_j}(V)$ stays out of the cylinder $C_a$ we have that, for $x\in V$ \[|(p\circ \mathcal{Z}_{\nu}^{n_j})(x)|>a.\] By using Lemma \ref{dete} we will now have that \[\det\left(D\mathcal{Z}_{\nu}^{n_j}\right)\geq{\left(\frac{\lambda }{L^5}\right)}^{n_j} \cdot \frac{a^3}{\lambda^{3}}  \hspace{2mm} \text{a.e. on} \hspace{1mm} V.\] 
			Since all of the iterates of $V$ do not intersect any of the planes $x_1=\pm x_2 + 2\lambda k$, where $k\in\mathbb{Z}$  and since the Zorich map is a homeomorphism in the square beams that remain if we remove those planes we will have that $\mathcal{Z}^n$ is a homeomorphism in $V$. Hence, for all $n_j\in\mathbb{N}$ we will have  that \[m(\mathcal{Z}_{\nu}^{n_j}(V))=\int_{V}\left|\det\left(D\mathcal{Z}_{\nu}^{n_j}\right)\right|dm\geq{\left(\frac{\lambda }{L^5}\right)}^{n_j}{\left(\frac{a^3}{\lambda^{3}}\right)} m(V),\] which tends to infinity as $j\to \infty$ since  $\lambda>{L^{5}}$.
		\end{proof}
		For our next lemma let us assume  that our Zorich map sends the beam $B_{(0,0)}$ to the half space $x_2\leq x_1$. The other alternative is mapping it to the half space $x_1\leq x_2$ but the methods work in a very similar way with minor modifications.

		Consider the inverse image under $\mathcal{Z}_{\nu}$ of the boundary of $B_{(0,0)}$ that lies in the interior of $B_{(0,0)}$. This inverse image will be some surface which we will call $S_0$. In Figure \ref{figure1} we have drawn the $x_1$$x_2$ plane and the rectangle beams $B_{(0,0)}$ and $B_{(0,-1)}$. Take now the planes $P_1:x_2=x_1-4\lambda$, $P_2:x_2=-x_1+4\lambda$, $P_3: x_2=-x_1-4\lambda$ and consider the rectangle beam they define together with the plane $x_1=x_2$. Let us now take the boundary of this beam, without the part that belongs to $x_1=x_2$, and name it $L_1$. Consider  now the inverse image of $L_1$ that lies inside $B_{(0,0)}$. This image is a surface, let us call it $S_1$. We can now do the same with the planes $P_4:x_2=x_1-6\lambda$, $P_5:x_2=-x_1+6\lambda$, $P_6: x_2=-x_1-6\lambda$ and get the boundary of the beam they define, which we call $L_2$ and then the surface we get by taking the inverse image which we call $S_2$. If we continue with this construction we get a sequence of surfaces, $S_0$, $S_1$, $S_2$, $S_3$, $\cdots$  inside $B_{(0,0)}$. Each of those surfaces lies above its previous starting with $S_0$. We can also construct similar surfaces $K_0, K_1, K_2, \cdots$ inside the beam $B_{(0,-1)}$  by taking inverse images of the corresponding boundaries $\partial B_{(0,-1)}$, $R_1$, $R_2$, $\cdots$ (see Figure \ref{figure1}). Moreover, we construct similar surfaces in all the other rectangle beams $B_{(i,j)}$, that partition the space,  depending on which half-space the beam is mapped to under the Zorich map. Let us denote by $\mathcal{S}$ the union of all those surfaces.
		
		We will show that the space between $S_n$ and $S_{n+1}$ (similarly between $K_n$ and $K_{n+1}$) is of finite volume and is decreasing as $n$ increases.

		\begin{figure}[h]
			\centering
			\includegraphics{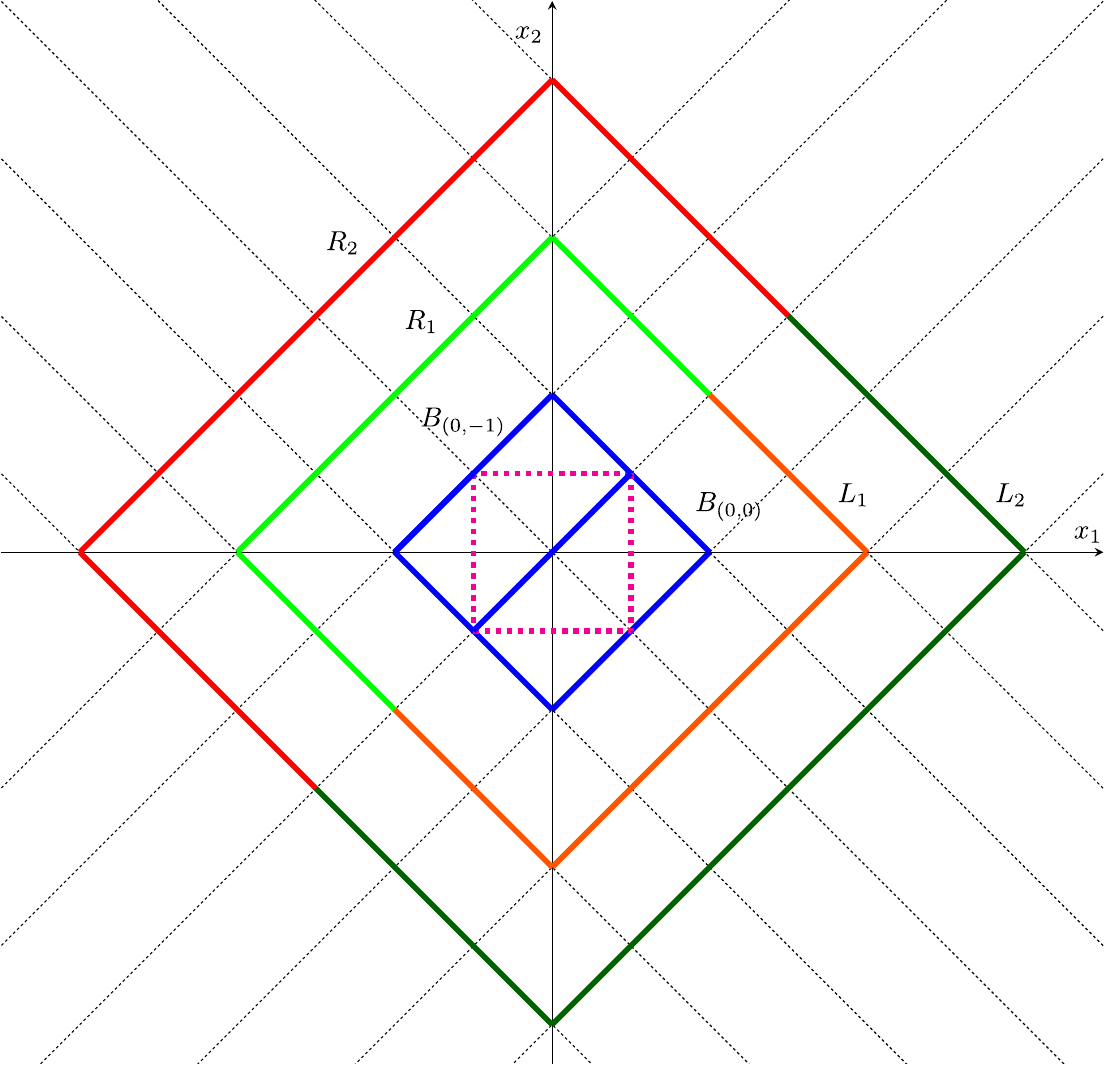}
			
			\caption{\small{The $x_1x_2$ plane. In pink is the initial square we used to define our Zorich map. In blue the rectangle  beams $B_{(0,0)}$ and $B_{(0,-1)}$ while in orange, dark green, green and red are the sets $L_1$, $L_2$, $R_1$ and $R_2$ respectively.}}
			\label{figure1}
		\end{figure}
		
		\begin{lemma}\label{surfacelemma}
			Let $I_n$ be the volume that the surface $S_n$ encloses together with the plane  $x_3=0$ and inside the beam $B_{(0,0)}$. Then $I_n$ is finite for all $n\in\mathbb{N}$. Furthermore, if $T_n:=I_{n+1}-I_n$ is the volume between $S_n$ and $S_{n+1}$ then $T_n$ is a decreasing sequence.  
		\end{lemma}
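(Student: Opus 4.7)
The plan is to realise $S_n$ as the graph of an explicit function $H_n$ over the cross section $R$ of $B_{(0,0)}$ and to read both assertions off from a single, $n$-independent, piecewise function $K\colon R\to(0,\infty)$.

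For fixed $(x_1,x_2)\in R$ the map $x_3\mapsto\mathcal{Z}_\nu(x_1,x_2,x_3)=\nu e^{x_3}h(x_1,x_2)$ is a strictly increasing parametrisation of the open ray from the origin through $h(x_1,x_2)$, so $S_n$ meets this ray at the unique point where it first exits the box $[0,2(n+1)\lambda]\times[-2(n+1)\lambda,2(n+1)\lambda]\times\mathbb{R}$ through $L_n$, in the coordinates $(u_{\mathrm{out}},v_{\mathrm{out}})=(y_1-y_2,y_1+y_2)$. Elementary ray tracing gives the exit parameter $s_n=2(n+1)\lambda/K(x_1,x_2)$, where $K$ equals $h_1-h_2$, $h_1+h_2$ or $-(h_1+h_2)$ depending on whether the face $u_{\mathrm{out}}=2(n+1)\lambda$, $v_{\mathrm{out}}=2(n+1)\lambda$ or $v_{\mathrm{out}}=-2(n+1)\lambda$ is reached first. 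Since this assignment depends only on the ratios of these three quantities, it is the same for every $n$, and in all cases
\[
H_n(x_1,x_2)=\log\frac{2(n+1)\lambda}{\nu K(x_1,x_2)},\qquad I_n=\int_R H_n\,dA.
\]

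Subtracting consecutive heights gives the constant difference $H_{n+1}-H_n=\log\frac{n+2}{n+1}$, hence
\[
T_n=I_{n+1}-I_n=|R|\log\frac{n+2}{n+1},
\]
which is finite and strictly decreasing in $n$; this settles the second assertion. Since $I_n=I_0+\sum_{k=0}^{n-1}T_k$, the first assertion reduces to showing $I_0<\infty$. Writing $R=R_A\sqcup R_B\sqcup R_C$ according to which formula for $K$ applies, we must show $\int_{R_\bullet}\log\bigl(2\lambda/(\nu K)\bigr)\,dA<\infty$ on each piece; the only obstruction is a logarithmic singularity along the portion of $\partial R_\bullet$ on which $K\to 0$.

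The crux is therefore the linear lower bound $K(p)\geq d(p,\{K=0\}\cap R)/L$ near the vanishing locus, the three-dimensional analogue of the inequality $|\mathfrak{h}_1(y)|\geq|y|/(\sqrt{2}L)$ from the end of the proof of Theorem~\ref{plane}. I would prove it by combining bi-Lipschitzness of $h$ with the geometry of its image sphere: the plane $\Pi=\{h_1=h_2\}$ (or its reflected image) passes through the origin, and an elementary computation on the sphere of radius $\lambda$ gives $d(P,\Pi)\geq|P-P'|/\sqrt{2}$ for any $P$ on the sphere, where $P'$ is its nearest point on the great circle $\Pi\cap\{|y|=\lambda\}$. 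Applying this to $P=h(p)$ and observing that, for $p$ near the zero set, $P'$ can be realised as $h(q')$ for some $q'\in R$ with $K(q')=0$, bi-Lipschitzness gives $|P-P'|=|h(p)-h(q')|\geq|p-q'|/L\geq d(p,\{K=0\}\cap R)/L$. Combining,
\[
K(p)=\sqrt{2}\,d(h(p),\Pi)\geq|h(p)-P'|\geq d(p,\{K=0\}\cap R)/L,
\]
so the integrand behaves like $|\log d|$ near the bad set and $\int\log(C/d)\,dA$ converges on any bounded planar region. The main subtlety lies precisely here: bi-Lipschitzness of $h$ controls only the length of $h(p)-h(q)$, and without the sphere constraint the chord could lie nearly inside $\Pi$, giving no bound on $d(h(p),\Pi)$; it is the spherical geometry that forces a definite proportion of this chord into the direction perpendicular to $\Pi$.
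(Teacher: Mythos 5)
Your overall strategy matches the paper's: realise $S_n$ as the graph of $H_n=\log\frac{2(n+1)\lambda}{\nu K}$ over the cross section of $B_{(0,0)}$ with a single $n$-independent piecewise positive function $K\in\{h_1+h_2,\,h_1-h_2,\,-(h_1+h_2)\}$, read off $T_n=4\lambda^2\log\frac{n+2}{n+1}$ from the constant difference of heights, and reduce finiteness to the integrability of the logarithmic singularity of $-\log K$, which both you and the paper control by combining bi-Lipschitzness of $h$ with the spherical geometry of its image. (Like the paper, you silently ignore the case where $S_n$ dips below $x_3=0$; the paper addresses it in one sentence by noting the true volumes are then even smaller.)

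However, there is a gap in the step you yourself flag as the crux. You take $P'$ to be the nearest point to $P=h(p)$ on the great circle $\Pi\cap S(0,\lambda)$ and write $q'=h^{-1}(P')$, and then claim $K(q')=0$. This is false in general: $q'$ lies on the preimage $h^{-1}(\Pi)$, which is a diagonal line segment of $\lambda Q$, but on that segment (take $\Pi=\{y_1=y_2\}$, so $h^{-1}(\Pi)=\{x_1=x_2\}$) the piecewise function $K$ equals $h_1+h_2=2h_1$, which vanishes only at the centre $(0,0)$. The actual zero set of $K$ consists of the three isolated points $(0,0)$, $(2\lambda,0)$, $(0,-2\lambda)$, and $q'$ is generically none of them, so the last inequality $|p-q'|\geq d(p,\{K=0\}\cap R)$ does not follow. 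What your chain does establish is the weaker bound $K(p)\geq d\bigl(p,h^{-1}(\Pi)\bigr)/L$, distance to the relevant diagonal line rather than to the finite set $\{K=0\}$. That weaker bound is still sufficient, since $\int\log\bigl(C/d(p,\text{line})\bigr)\,dA$ converges on a bounded planar region, so the lemma is salvaged once you replace $\{K=0\}$ by $h^{-1}(\Pi)$ in the statement of the crux inequality. The paper avoids this subtlety by a slightly different device: it uses the fixed chord to $h(0)=(0,0,\lambda)$ (which does lie in the zero set and on the great circle) together with Thales' bound $\sin\theta\geq 1/\sqrt{2}$ on the angle that chord makes with the $x_3$-axis, arriving at $K(p)\geq |p|/(\sqrt{2}L)$ directly. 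Either route works; yours just needs the zero set replaced by the diagonal.
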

		\begin{proof}
			Let us first find an implicit equation that describes each of these surfaces. We work on $B_{(0,0)}$ but the same can be done  on all other rectangle beams. Let us split $B_{(0,0)}$ in three different beams whose cross-sections with the $x_1x_2$ plane are the sets $$Q_1:=h^{-1}\left(\{(x_1,x_2,x_3)\in S(0,\lambda):0\leq x_2\leq x_1\}\right),$$ $$Q_2:=h^{-1}\left(\{(x_1,x_2,x_3)\in S(0,\lambda): x_2\leq 0, x_1\geq 0\}\right)$$ and $$Q_3:=h^{-1}\left(\{(x_1,x_2,x_3)\in S(0,\lambda):x_2\leq x_1\leq 0\}\right),$$ where $S(0,\lambda)$ the sphere of centre $0$ and radius $\lambda$. In the  beam corresponding to the first cross section, meaning $Q_1\times \mathbb{R}$, the points on the surface $S_n$ satisfy \[\nu e^{x_3}h_1(x_1,x_2)=-\nu e^{x_3}h_2(x_1,x_2)+2(n+1)\lambda.\]On the beam $Q_2\times \mathbb{R}$ the surface points satisfy \[\nu e^{x_3}h_1(x_1,x_2)=\nu e^{x_3}h_2(x_1,x_2)+2(n+1)\lambda,\] while on the beam $Q_3\times \mathbb{R}$ that corresponds to the last cross section the points satisfy \[\nu e^{x_3}h_1(x_1,x_2)=-\nu e^{x_3}h_2(x_1,x_2)-2(n+1)\lambda.\]  Suppose now that the surfaces $S_n$ do not intersect the plane $x_3=0$. Hence the volume that the surface $S_n$ encloses together with the plane  $x_3=0$ and inside the beam $B_{(0,0)}$ is given by the integrals \begin{multline*}
				I_n=\int\int_{Q_1}\log \frac{2(n+1)\lambda}{\nu\left(h_2(x_1,x_2)+h_1(x_1,x_2)\right)}dx_2dx_1+\int\int_{Q_2}\log \frac{-2(n+1)\lambda}{\nu\left(h_1(x_1,x_2)-h_2(x_1,x_2)\right)}dx_2dx_1\\+\int\int_{Q_3}\log \frac{-2(n+1)\lambda}{\nu\left(h_2(x_1,x_2)+h_1(x_1,x_2)\right)}dx_2dx_1.
			\end{multline*}
			It is not so hard to prove now that each of these integrals is finite since  each one of them is an integral of a bounded function except at a neighbourhood of $(0,0)$ and the points where $h_1\pm h_2 =0$ which are $(0,-2\lambda)$, $(2\lambda,0)$. So in order to show that this sum is finite, it is enough to consider the integrals only in neighbourhoods of those points. On the other hand, when those surfaces $S_n$ intersect the plane $x_3=0$ the volumes are no longer given by the above integrals (the set where we integrate will change) but again we only have to consider them at a neighbourhood of $(0,0)$ as well as  $(0,-2\lambda)$, $(2\lambda,0)$ so that is what we do next. In fact, those volumes in that case are even smaller. 
			
			We will only treat here the second integral around an $\varepsilon$-neighbourhood of $(0,0)$ and the rest follows similarly. So we are looking at the integral
			\[\int\int_{Q_2\cap B(0,\varepsilon)}\log \frac{-2(n+1)\lambda}{\nu\left(h_2(x_1,x_2)-h_1(x_1,x_2)\right)}dx_2dx_1,\] where $B(0,\varepsilon)$ is a ball centred at $0$ with radius $\varepsilon$.
			Equivalently, we want to show that the integral \begin{equation}\label{integral}\int\int_{Q_2\cap B(0,\varepsilon)}-\log \left(h_1(x_1,x_2)-h_2(x_1,x_2)\right)dx_2dx_1\end{equation} is finite. Now because $h$ is a bi-Lipschitz map and because $h_2(x_1,x_2)\leq 0$ and $h_1(x_1,x_2)\geq0$ in the set we are integrating we have that \[(h_1-h_2)^2=h_1^2+h_2^2+2h_1(-h_2)\geq h_1^2+h_2^2=\sin^2 \theta (h_1^2+h_2^2+(h_3-\lambda)^2)\geq \frac{c_{\varepsilon}}{L^2}(x_1^2+x_2^2),\]where $\theta$ is the angle between the $x_3$ axis and the segment that connects $(0,0,\lambda)$ with $(h_1,h_2,h_3)$ and $c_{\varepsilon}>0$ a constant that depends only on $\varepsilon$. Hence \[|h_1(x_1,x_2)-h_2(x_1,x_2)|\geq \frac{\sqrt{c_\varepsilon}}{L}\sqrt{x_1^2+x_2^2}.\]  Now since $h_1-h_2\geq0$ in the set we are integrating, we will have that \[\int\int_{Q_2\cap B(0,\varepsilon)}\log \left(h_1(x_1,x_2)-h_2(x_1,x_2)\right)dx_2dx_1\geq \int\int_{Q_2\cap B(0,\varepsilon)}\log \left(\frac{\sqrt{c_\varepsilon}}{L}\sqrt{x_1^2+x_2^2}\right)dx_2dx_1.\]Now since the last integral is finite we will have that the integral (\ref{integral}) is also finite.
			
			Finally, let us show that the sequence $T_n$ is a decreasing one. Indeed, \[T_n=I_{n+1}-I_n=\lambda^2\log\frac{n+2}{n+1}+\lambda^2 \log\frac{n+2}{n+1}+2\lambda^2\log\frac{n+2}{n+1}=4\lambda^2\log\frac{n+2}{n+1},\]
			which can be easily seen to be a decreasing sequence.
		\end{proof}
		\begin{lemma}\label{axis}
			Assume $\lambda>L^{5}$ and let $V$ be a connected subset of $\mathbb{R}^3$ with $m(V)>0$ and such that $\mathcal{Z}_{\nu}^n(V)$ does not intersect any of the planes $x_1=\pm x_2 +2\lambda k$, where $k\in\mathbb{Z}$ for all $n\in\mathbb{N}$. Then $\mathcal{Z}_{\nu}^n(V)$ visits infinitely often one of the two rectangle beams $B_{(0,0)}$, $B_{(0,-1)}$, that have the $x_3$-axis in their boundary.
		\end{lemma}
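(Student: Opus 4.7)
The plan is to argue by contradiction. Suppose there is some $N$ with $\mathcal{Z}_\nu^n(V)\cap(B_{(0,0)}\cup B_{(0,-1)})=\emptyset$ for all $n\geq N$. After replacing $V$ with $\mathcal{Z}_\nu^N(V)$, whose measure is still positive because the determinant lower bound (\ref{eqdeter}) in the proof of Lemma~\ref{dete} forces $\mathcal{Z}_\nu$ to be locally volume-expanding off the $x_3$-axis (which $V$ cannot meet by the no-crossing hypothesis), I may assume the avoidance holds for every $n\geq 0$.

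Since $V$ is connected and no iterate crosses any plane $x_1=\pm x_2+2\lambda k$, each $\mathcal{Z}_\nu^n(V)$ lies inside a single open beam $B_{(i_n,j_n)}$ with $(i_n,j_n)\notin\{(0,0),(0,-1)\}$. A direct geometric inspection of the partition shows that $B_{(0,0)}\cup B_{(0,-1)}$, together with the plane $\{x_1=x_2\}$, exactly fills the open diamond beam $\{|x_1|+|x_2|<2\lambda\}\times\mathbb{R}$ around the $x_3$-axis; in particular it contains the punctured cylinder $C_\delta\setminus\{x_1=x_2\}$ for $\delta=\sqrt{2}\lambda$. Since the iterates avoid both beams as well as the plane $\{x_1=x_2\}$, they stay outside $C_\delta$. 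Lemma~\ref{metro} then yields $m(\mathcal{Z}_\nu^n(V))\to\infty$.

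The contradiction will come from a uniform upper bound on $m(\mathcal{Z}_\nu^n(V))$. The key observation is that $\mathcal{Z}_\nu^{n+1}(V)$ also lies in a single beam, so $\mathcal{Z}_\nu^n(V)$ is confined to the ``layer''
\[
L_n := B_{(i_n,j_n)}\cap\mathcal{Z}_\nu^{-1}\bigl(B_{(i_{n+1},j_{n+1})}\bigr),
\]
the analogue inside $B_{(i_n,j_n)}$ of one of the regions between consecutive surfaces $S_k$ or $K_k$ described before Lemma~\ref{surfacelemma}. Using (\ref{eqdeter}) together with the identity $|\mathcal{Z}_\nu(x)|=\nu\lambda e^{x_3}$ one gets $|\det D\mathcal{Z}_\nu(x)|\geq|y|^3/(L^2\lambda^2)$ with $y=\mathcal{Z}_\nu(x)$. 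Applying the change of variables formula on the homeomorphism $\mathcal{Z}_\nu|_{B_{(i_n,j_n)}}$ therefore gives
\[
m(L_n)\;\leq\;L^2\lambda^2\int_{B_{(i_{n+1},j_{n+1})}}\frac{dm(y)}{|y|^3},
\]
and performing the $y_3$-integration via $\int_{\mathbb{R}}(r^2+y_3^2)^{-3/2}\,dy_3=2/r^2$ reduces the right-hand side to a two-dimensional integral of $(y_1^2+y_2^2)^{-1}$ over the cross-section of $B_{(i_{n+1},j_{n+1})}$.

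The main obstacle is showing this bound is uniform in $n$. The crucial ingredient is a purely geometric fact: the cross-section of every beam $B_{(i,j)}$ other than $B_{(0,0)}$ and $B_{(0,-1)}$ is bounded away from the origin by at least $\sqrt{2}\lambda$, as one checks from the explicit partition of $\mathbb{R}^2$ by the lines $x_1=\pm x_2+2\lambda k$. This avoids the logarithmic singularity of $(y_1^2+y_2^2)^{-1}$ at the origin and, combined with the fact that each cross-section has area $4\lambda^2$, yields a uniform bound $m(L_n)\leq M$ with $M=M(\lambda,L,\nu)$. Hence $m(\mathcal{Z}_\nu^n(V))\leq m(L_n)\leq M$ for every $n$, contradicting $m(\mathcal{Z}_\nu^n(V))\to\infty$ and completing the proof.
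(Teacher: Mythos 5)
Your proof is correct (modulo a couple of small phrasing issues), but it takes a genuinely different route from the paper's. The paper's own proof uses the machinery set up just before: the surfaces $S_k$, $K_k\subset\mathcal{S}$ inside each beam (preimages of the nested boundaries $L_k$, $R_k$) and Lemma~\ref{surfacelemma}, which says the slabs between consecutive surfaces have bounded volume. The paper then argues that $m(V_i)\to\infty$ forces $V_i$ to drop below the lowest surface $S_0$, and dropping below $S_0$ means $V_{i+1}$ lands back in $B_{(0,0)}\cup B_{(0,-1)}$, a contradiction. You bypass the surfaces entirely: you note $V_n$ is trapped in the layer $L_n=B_{(i_n,j_n)}\cap\mathcal{Z}_\nu^{-1}(B_{(i_{n+1},j_{n+1})})$, and you bound $m(L_n)$ directly by changing variables, using $\det D\mathcal{Z}_\nu(x)\geq|y|^3/(L^2\lambda^2)$ (which follows from \eqref{eq1234}--\eqref{eqdeter} plus $|\mathcal{Z}_\nu(x)|=\nu\lambda e^{x_3}$), performing the $y_3$-integration, and using the fact that every cross-section other than those of $B_{(0,0)}$, $B_{(0,-1)}$ stays outside the disk of radius $\sqrt2\lambda$. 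This gives the clean uniform bound $m(L_n)\leq 4\lambda^2L^2$, contradicting $m(V_n)\to\infty$. Your route is self-contained in that it does not invoke Lemma~\ref{surfacelemma} (though of course Theorem~\ref{main} still needs that lemma elsewhere); it also makes the numerical constant explicit, which the paper's argument does not.

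Two small points to tidy up. First, your justification that $m(\mathcal{Z}_\nu^N(V))>0$ via ``locally volume-expanding'' is not quite right, since \eqref{eqdeter} only gives $\det D\mathcal{Z}_\nu(x)\geq\nu^3e^{3x_3}\lambda/L^2$, which can be arbitrarily small when $x_3\to-\infty$; what you actually need is just that the Jacobian is a.e.\ positive and $\mathcal{Z}_\nu$ is a homeomorphism on the beam containing $V$, so that $m(\mathcal{Z}_\nu(V))=\int_V\det D\mathcal{Z}_\nu\,dm>0$. Second, to apply Lemma~\ref{metro} you should take a cylinder of radius \emph{strictly less} than $\sqrt2\lambda$ (say $a=\lambda$), because with $a=\sqrt2\lambda$ the inscribed circle of the diamond $\{|x_1|+|x_2|<2\lambda\}$ touches its boundary, which is not in $B_{(0,0)}\cup B_{(0,-1)}\cup\{x_1=x_2\}$; the lemma allows any positive radius, so this costs nothing.
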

		\begin{proof}
			Consider the iterates $V_i=\mathcal{Z}_{\nu}^i(V)$ of the set $V$. The sets $V_i$ stay always inside one of the rectangle beams by assumption and also they cannot intersect any of the surfaces in $\mathcal{S}$ that are in those beams since if they did on the next iterate they would intersect the boundary of one of the beams.  Suppose now that we can find a $N\in\mathbb{N}$  such that $V_{i}\not\in B_{(0,0)}\cup B_{(0,-1)}$ for all $i>N$. Then by Lemma \ref{metro} we have that $m(V_{i})\to\infty$. This implies that our sets $V_{i}$ cannot lie between any two of the surfaces in $\mathcal{S}$, for all large $i$ since there is finite volume between them. Thus $V_{i}$ stays below the lowest surface in the relevant rectangle beam for all $i>N_1>N$, where $N_1\in\mathbb{N}$. This is a contradiction since being below that surface implies that $V_{i+1}$ is in either $B_{(0,0)}$ or $B_{(0,-1)}$.
		\end{proof}
		
		The next lemma tells us that when a set remains in $B_{(0,0)}$ under iteration by $\mathcal{Z}_{\nu}$ then we can find points with large $x_3$ coordinate in its iterates.
		\begin{lemma}\label{newlemma}
			Assume $\lambda>L^{5}$, $\nu>\frac{1}{\lambda e}$ and let $V$ be a connected set of $\mathbb{R}^3$ with $m(V)>0$ and such that $\mathcal{Z}_{\nu}^n(V)$ does not intersect any of the planes $x_1=\pm x_2 +2\lambda k$, where $k\in\mathbb{Z}$ for all $n\in\mathbb{N}$. Suppose that there is an $N_0\in \mathbb{N}$ such that  $\mathcal{Z}_{\nu}^n(V)\subset B_{(0,0)}$, for all $n>N_0$. Then for all $M>0$ and $\varepsilon>0$ there is some $n_0>N_0$ and a point $x\in \mathcal{Z}_{\nu}^{n_0}(V)$ such that $p_3(x)>M$ and $d(x,x_3\text{-axis})<\varepsilon$, where $d$ is the euclidean distance.
		\end{lemma}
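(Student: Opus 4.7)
I plan to argue by contradiction. Suppose there exist $M,\varepsilon>0$ such that no iterate $V_n:=\mathcal{Z}_\nu^n(V)$ with $n>N_0$ contains a point $x$ with $p_3(x)>M$ and $d(x,x_3\text{-axis})<\varepsilon$. My first step is to localize $V_n$: because $V_n\subset B_{(0,0)}$ and $V_{n+1}\subset B_{(0,0)}$, and because the iterates avoid the forbidden planes $x_1=\pm x_2+2\lambda k$ (and hence also all the surfaces $S_k$ introduced before Lemma~\ref{surfacelemma}, since $\mathcal{Z}_\nu$ sends each $S_k$ into pieces of those planes), the connected set $V_n$ must lie in a single component of $B_{(0,0)}\setminus\bigcup_k S_k$. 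The condition $\mathcal{Z}_\nu(V_n)\subset B_{(0,0)}$ then pins down this component as the innermost region $R_0$ lying below $S_0$, described in each sub-wedge by $\nu e^{x_3}|h_1\pm h_2|(p(x))<2\lambda$. The bi-Lipschitz property of $h$ forces $|h_1\pm h_2|(p(x))\to 0$ only as $|p(x)|\to 0$, so this inequality shows that very large $p_3(x)$ on $R_0$ forces $|p(x)|$ to be very small. Choosing $M'\geq M$ large enough in terms of $L,\lambda,\nu,\varepsilon$, any $x\in V_n$ with $p_3(x)>M'$ would automatically lie within $\varepsilon$ of the axis, contradicting the hypothesis, so the assumption upgrades to the uniform height bound $V_n\subset\{p_3\leq M'\}$ for all $n>N_0$.

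Next I would force $V_n$ to accumulate on the axis. Suppose for contradiction that for some $a>0$ one has $V_n\cap C_a=\emptyset$ for all $n>N_2$. Then Lemma~\ref{metro} gives $m(V_n)\to\infty$. But $V_n\subset B_{(0,0)}\cap\{p_3\leq M'\}\cap C_a^c$ is bounded in the $(x_1,x_2)$ directions and above in $p_3$, so the volume growth can only come from $V_n$ reaching arbitrarily negative $p_3$; picking $y_n\in V_n$ with $p_3(y_n)\to-\infty$, the identity $|\mathcal{Z}_\nu(y_n)|=\nu\lambda e^{p_3(y_n)}\to 0$ makes $\mathcal{Z}_\nu(y_n)\in V_{n+1}$ approach the origin, and in particular eventually lie in $C_a$, contradicting $V_{n+1}\cap C_a=\emptyset$. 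Hence for every $a>0$ there are infinitely many $n$ with $V_n\cap C_a\neq\emptyset$, and a diagonal extraction produces a strictly increasing sequence $n_j\to\infty$ and points $y_j\in V_{n_j}$ with $|p(y_j)|<1/j$ and $p_3(y_j)\leq M'$.

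The final step is a limit-plus-iteration argument. If $p_3(y_j)$ has a bounded subsequence, extract $y_{j_k}\to y^*=(0,0,t^*)$ lying on the axis; otherwise $p_3(y_j)\to-\infty$ along a subsequence and replacing each such $y_j$ by $\mathcal{Z}_\nu(y_j)\in V_{n_j+1}$ gives a new sequence converging to the origin, which is again on the axis. Since the axis is $\mathcal{Z}_\nu$-invariant with restriction $E_{\nu\lambda}(t)=\nu\lambda e^t$, and the hypothesis $\nu\lambda>1/e$ rules out fixed points for $E_{\nu\lambda}$, the iterates satisfy $E_{\nu\lambda}^k(t^*)\to+\infty$. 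I would then pick $K$ with $E_{\nu\lambda}^K(t^*)>M'+1$; continuity of $\mathcal{Z}_\nu^K$ at $y^*$ yields $\mathcal{Z}_\nu^K(y_j)\in V_{n_j+K}$ with $p_3>M'$ for all large $j$, contradicting the uniform bound established in the first step. The main obstacle I anticipate is the second step: converting ``$V_n$ stays outside a cylinder'' into an ``escape to $p_3=-\infty$'' via Lemma~\ref{metro} and then back to the axis under a single iteration, carefully combining the geometric constraints from $R_0$ with the volume growth.
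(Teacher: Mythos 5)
Your overall strategy --- force $V_n$ to accumulate on the $x_3$-axis and then iterate along the axis by continuity --- is sound and close to the paper's, but the first step of your argument contains a genuine error. You assert that on the region $R_0\subset B_{(0,0)}$ below $S_0$, cut out in each sub-wedge by $\nu e^{x_3}|h_1\pm h_2|(p(x))<2\lambda$, the bi-Lipschitz property of $h$ forces ``$|h_1\pm h_2|(p(x))\to 0$ only as $|p(x)|\to 0$,'' and you conclude that a large value of $p_3(x)$ on $R_0$ automatically puts $x$ near the $x_3$-axis. This is false. The rectangular beam $B_{(0,0)}$ has vertical edges through $(0,0)$, $(2\lambda,0)$ and $(0,-2\lambda)$, and the reflected extension of $h$ takes the value $(0,0,-\lambda)$ at the two latter points; in particular $h_1-h_2$ vanishes at $(2\lambda,0)$ and $h_1+h_2$ vanishes at $(0,-2\lambda)$. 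The paper's proof of Lemma~\ref{surfacelemma} explicitly identifies $(0,0)$, $(2\lambda,0)$, $(0,-2\lambda)$ as the three singular points of the integrand. Consequently $S_0$ climbs to $+\infty$ along all three edges of $B_{(0,0)}$, and $R_0$ contains points with arbitrarily large $p_3$ that are far from the $x_3$-axis but close to the edge over $(2\lambda,0)$ or $(0,-2\lambda)$. The claimed uniform height bound $V_n\subset\{p_3\leq M'\}$ is therefore not established, and this gap propagates: step~2 invokes ``bounded above in $p_3$'' to deduce escape to $p_3=-\infty$, and step~3 closes by contradicting that very bound.

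The damage is repairable because the false bound is not really needed. In step~2, Lemma~\ref{surfacelemma} gives $m\bigl(R_0\cap\{p_3>0\}\bigr)=I_0<\infty$ directly, and $m\bigl(R_0\cap\{-R<p_3\leq 0\}\bigr)\leq 4\lambda^2 R$, so $m(V_n)\to\infty$ from Lemma~\ref{metro} still forces points $y_n\in V_n$ with $p_3(y_n)\to-\infty$; then $|\mathcal{Z}_\nu(y_n)|=\nu\lambda e^{p_3(y_n)}\to 0$ puts $\mathcal{Z}_\nu(y_n)\in V_{n+1}\cap C_a$, the contradiction you wanted, with no reference to $M'$. In step~3, after extracting $y_j\in V_{n_j}$ with $|p(y_j)|\to 0$, observe that if $p_3(y_j)\to+\infty$ along a subsequence the lemma already holds; otherwise pass to a bounded or $-\infty$-tending subsequence as you do, and finish by noting that $\mathcal{Z}_\nu^K(y_j)$ eventually lies within $\varepsilon$ of the axis with $p_3>M$, which contradicts the lemma's negation directly. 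Once patched, your proof parallels the paper's case split (``stay away from the axis'' versus ``accumulate on it''), and your step~2 contradiction --- pushing a deep point forward one iterate into $C_a$ --- is in fact a little slicker than the paper's, which instead pulls the deep point back two iterates so as to land inside the initial square beam $[-\lambda,\lambda]^2\times\mathbb{R}$, precisely because there $S_0$ is singular only over the origin and the corner singularities you overlooked do not occur.
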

		\begin{proof}
			Either an $\varepsilon_0>0$  exists such that $d(\mathcal{Z}_{\nu}^n(V),x_3\text{-axis})>\varepsilon_0$ for all $n>N_0$ or such an $\varepsilon_0$ does not exist. In the first case we know from Lemma \ref{metro} that $m(\mathcal{Z}_{\nu}^n(V))\to\infty$. We also know that since $\mathcal{Z}_{\nu}^n(V)\in B_{(0,0)}$, for all $n>N_0$, $\mathcal{Z}_{\nu}^n(V)$ must, by Lemma \ref{surfacelemma},  lie below the surface $S_0$ . Since $m(\mathcal{Z}_{\nu}^n(V))\to\infty$ then it must be true that for all $M_1>0$ there is a $n_0$ and a point $z_0$ in $\mathcal{Z}_{\nu}^{n_0}(V)$ with $p_3(z_0)<-M_1$. The pre-image of that point inside $B_{(0,0)}$ is a point $z^{(1)}=\left(z_1^{(1)},z_2^{(1)},z_3^{(1)}\right)$ for which $\nu e^{z_3^{(1)}}h_3\left(z_1^{(1)},z_2^{(1)}\right)<-M_1$ and $h_3\left(z_1^{(1)},z_2^{(1)}\right)<0$. But since $h_3>-\lambda$ we have that \[e^{z_3^{(1)}}>\frac{M_1}{\nu \lambda}\Rightarrow z_3^{(1)}>\log\frac{ M_1}{\nu \lambda}.\] If we now take the pre-image in $B_{(0,0)}$ of that point, $z^{(2)}=\left(z_1^{(2)},z_2^{(2)},z_3^{(2)}\right)$ then $0<h_3\left(z_1^{(2)},z_2^{(2)}\right)<\lambda$ and \[ \nu e^{z_3^{(2)}}h_3\left(z_1^{(2)},z_2^{(2)}\right)=z_3^{(1)}>\log \frac{M_1}{\nu\lambda},\]which implies then $$z_3^{(2)}>\log\frac{\log \frac{M_1}{\nu\lambda}}{\nu\lambda}.$$ Thus we have shown that for any $M>0$ there is a point $z^{(2)}$ in $\mathcal{Z}_{\nu}^{n_0-2}(V)$  for which $z_3^{(2)}>M$ and also $\left|z_1^{(2)}\right|,\left|z_2^{(2)}\right|< \lambda$. This leads to a contradiction. To see why, note that by our assumptions $\mathcal{Z}_{\nu}^n(V)$ is $\varepsilon_0$ away from the $x_3$-axis and below the surface $S_0$ and thus all of its points, which are also inside the initial square beam $[-\lambda,\lambda]^2\times \mathbb{R}$ (pink in figure \ref{figure1}), have a bounded $x_3$ coordinate. This is true because the surface $S_0$ together with any cylinder around the $x_3$ axis and the plane $x_3=0$ enclose a set inside $[-\lambda,\lambda]^2\times \mathbb{R}$ and outside of the cylinder whose closure is compact. 
			
			For the second case now, where such an $\varepsilon_0$ does not exist, then there is a sequence $w_k\in\cup_{n>N_0}\mathcal{Z}_{\nu}^n(V)$ with $d(w_k,x_3\text{-axis})\to 0$. If  $p_3(w_k)\to\infty$ we are done. If on the other hand  $p_3(w_k)\to-\infty$ then $y_k:=\mathcal{Z}_{\nu}(w_k)\to (0,0,0)$ and thus $\mathcal{Z}_{\nu}^n(y_k)\to \mathcal{Z}_{\nu}^n(0)=(0,0, E_{\nu\lambda}^n(0))$, where $E_{\nu\lambda}^n(0))$ converges to $\infty$ and again we are done. The remaining case to consider is when there is a subsequence $w_{k_i}$ converging to some point  $(0,0,a)$ in the $x_3$-axis. By relabelling we may assume that $w_k\to (0,0,a)$. Now choose an $N>0$ such that $E_{\nu\lambda}^N(a)>M$, where $E_{\nu\lambda}$ denotes the map $x\mapsto\nu\lambda e^x$. By continuity of $\mathcal{Z}_{\nu}^N$, for all $\varepsilon>0$ we may find a $\delta$  such that if $|w_k-(0,0,a)|\leq \delta$, then \[\left|\mathcal{Z}_{\nu}^N(w_k)-\mathcal{Z}_{\nu}^N(0,0,a)\right|=\left|\mathcal{Z}^N_{\nu}(w_k)-\left(0,0,E_{\nu\lambda}^N(a)\right)\right|\leq \varepsilon.\] Hence, if we choose $\varepsilon$ small enough we get that $x_3\left(\mathcal{Z}^N_{\nu}(w_k)\right)>M$ and $ \mathcal{Z}^N_{\nu}(w_k)$ is within $\varepsilon$ distance from the $z$-axis, when $k$ is large enough.
		\end{proof}
		\begin{lemma}\label{lipschitz}
			Let $y_1,y_2\in B(0,r)$, where $r>0$. Then  for all $n\in \mathbb{N}$ it is true that \[|\mathcal{Z}_{\nu}^n(y_1)-\mathcal{Z}_{\nu}^n(y_2)|\leq \left(\frac{\max\{L,\lambda\}}{\lambda}\right)^n E_{\nu\lambda}(r)\cdots E_{\nu\lambda}^n(r)|y_1-y_2|,\] where $E_{\nu\lambda}$ denotes the exponential map $x\mapsto\nu\lambda e^x$ and $L$ is the bi-Lipschitz constant we used in the construction of the Zorich maps.
		\end{lemma}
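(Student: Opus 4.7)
The plan is to prove this by induction on $n$, with the base case $n=1$ amounting to a uniform bound on the operator norm of $D\mathcal{Z}_\nu$ on $B(0,r)$. First I would compute, at any point $x=(x_1,x_2,x_3)$ where $h$ is differentiable, that the derivative satisfies
\[
D\mathcal{Z}_\nu(x)(v) = \nu e^{x_3}\bigl(v_1\partial_1 h(p(x)) + v_2\partial_2 h(p(x)) + v_3 h(p(x))\bigr).
\]
The crucial observation is that $h$ takes values on the sphere of radius $\lambda$ (this is built into the construction on the initial square beam and preserved under the reflections), so the tangent vectors $\partial_1 h,\partial_2 h$ are orthogonal to the radial vector $h(p(x))$. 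Expanding $|D\mathcal{Z}_\nu(x)(v)|^2$ and using this orthogonality together with $|h|=\lambda$, the Lipschitz bound $|v_1\partial_1 h + v_2\partial_2 h|\leq L\sqrt{v_1^2+v_2^2}$, and $|v_3 h|=\lambda|v_3|$ yields
\[
|D\mathcal{Z}_\nu(x)(v)|^2 \leq \nu^2 e^{2x_3}\bigl(L^2(v_1^2+v_2^2) + \lambda^2 v_3^2\bigr) \leq \nu^2 e^{2x_3}\max\{L,\lambda\}^2|v|^2,
\]
and hence $|D\mathcal{Z}_\nu(x)|\leq \nu e^{x_3}\max\{L,\lambda\} = \frac{\max\{L,\lambda\}}{\lambda}E_{\nu\lambda}(x_3)$ almost everywhere.

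Next I would establish a control on where the iterates land. If $y\in B(0,r)$ then $|y_3|\leq r$, so $|\mathcal{Z}_\nu(y)|=\nu\lambda e^{y_3}\leq E_{\nu\lambda}(r)$, and inductively $|\mathcal{Z}_\nu^k(y)|\leq E_{\nu\lambda}^k(r)$. In particular, for any $k$ the line segment from $\mathcal{Z}_\nu^k(y_1)$ to $\mathcal{Z}_\nu^k(y_2)$ is contained in the convex ball $\overline{B}(0,E_{\nu\lambda}^k(r))$, so every point on this segment has third coordinate bounded in absolute value by $E_{\nu\lambda}^k(r)$.

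For the induction step, assume the inequality for $n$ and consider the two points $\mathcal{Z}_\nu^n(y_1),\mathcal{Z}_\nu^n(y_2)\in \overline{B}(0,E_{\nu\lambda}^n(r))$. Combining the derivative bound above with the inclusion just noted, the a.e.\ bound $|D\mathcal{Z}_\nu(x)|\leq \frac{\max\{L,\lambda\}}{\lambda}E_{\nu\lambda}^{n+1}(r)$ holds along the entire segment. Since $\mathcal{Z}_\nu$ is quasiregular, hence ACL, the mean value inequality applied along this segment gives
\[
|\mathcal{Z}_\nu^{n+1}(y_1)-\mathcal{Z}_\nu^{n+1}(y_2)| \leq \frac{\max\{L,\lambda\}}{\lambda}E_{\nu\lambda}^{n+1}(r)\,|\mathcal{Z}_\nu^n(y_1)-\mathcal{Z}_\nu^n(y_2)|,
\]
and the inductive hypothesis closes the induction.

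The main obstacle is really just the orthogonality computation: if one uses the triangle inequality carelessly (splitting $\mathcal{Z}_\nu(y_1)-\mathcal{Z}_\nu(y_2)$ into an exponential difference and an $h$-difference), one gets a constant like $L+\lambda$ instead of $\max\{L,\lambda\}$, which would produce the wrong Lipschitz factor and spoil the telescoping product. The orthogonality of $\partial_i h$ to $h$, which follows from $h$ lying on a sphere of fixed radius, is what replaces this sum by a max and gives the sharp constant $\max\{L,\lambda\}/\lambda$ required in the statement.
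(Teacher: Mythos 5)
Your proof is correct and rests on the same two ingredients as the paper's: the orthogonality of $C=h(p(x))$ to $\partial_1 h$ and $\partial_2 h$ (because $h$ takes values on a sphere of fixed radius), giving the a.e.\ bound $|D\mathcal{Z}_\nu(x)|\leq \nu e^{x_3}\max\{L,\lambda\}$, and the tracking of iterates via $E_{\nu\lambda}^k(r)$. The only packaging difference is that the paper applies the mean value inequality once to $\mathcal{Z}_\nu^n$ along the segment $[y_1,y_2]$ and then factors $|D\mathcal{Z}_\nu^n|$ by the chain rule, whereas you apply it afresh to a single copy of $\mathcal{Z}_\nu$ along the segment $[\mathcal{Z}_\nu^n(y_1),\mathcal{Z}_\nu^n(y_2)]$ at each inductive step; both routes are valid and yield the same product of factors, so this is essentially the paper's argument in induction form.
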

		\begin{proof}
			The Zorich map is absolutely continuous on any line segment since it is  locally Lipschitz. Using the fundamental theorem of calculus for the Lebesgue integral now, it is not too hard to prove  that a version of  the finite increment theorem, see \cite[10.4.1, Theorem 1]{Zorich2019}, is true for such functions. Specifically, we can prove that \[|\mathcal{Z}_{\nu}^n(y_1)-\mathcal{Z}_{\nu}^n(y_2)|\leq \esssup_{x\in \gamma}|D\mathcal{Z}_{\nu}^n(x)||y_1-y_2|,\] where $\gamma$ is the line segment that connects $y_1$ to $y_2$.	Remember that $|Df|$ denotes the operator norm of the total derivative. Hence, by the chain rule and elementary properties of linear maps we have that \[|D\mathcal{Z}_{\nu}^n(x)|\leq |D\mathcal{Z}_{\nu}\left(\mathcal{Z}_{\nu}^{n-1}(x)\right)|\cdots |D\mathcal{Z}_{\nu}(x)|.\] Hence by the above inequalities and because $y_1,y_2 \in B(0,r)$ we have that
			\begin{align}\label{ineq}|\mathcal{Z}_{\nu}^n(y_1)-\mathcal{Z}_{\nu}^n(y_2)|\leq&\nonumber \esssup_{x\in \gamma}|D\mathcal{Z}_{\nu}\left(\mathcal{Z}_{\nu}^{n-1}(x)\right)|\cdots \esssup_{x\in \gamma}|D\mathcal{Z}_{\nu}(x)||y_1-y_2|\\\leq& \esssup_{x\in B(0,r)}|D\mathcal{Z}_{\nu}\left(\mathcal{Z}_{\nu}^{n-1}(x)\right)|\cdots \esssup_{x\in B(0,r)}|D\mathcal{Z}_{\nu}(x)||y_1-y_2| .\end{align}

			We also know that $D\mathcal{Z}_{\nu}(x)= e^{x_3}D\mathcal{Z}_{\nu}(x_1,x_2,0)$.  Moreover, we will prove that \[|D\mathcal{Z}_{\nu}(x_1,x_2,0)|\leq \nu\max\{L,\lambda\}.\] Indeed, let $u=(u_1,u_2,u_3)\in\mathbb{R}^3$ then

			\begin{align*}|D\mathcal{Z}_{\nu}(x_1,x_2,0)|^2=\sup_{|u|=1}|D\mathcal{Z}_{\nu}(x_1,x_2,0)(u)|^2= \nu^2\sup_{|u|=1}|u_1A+u_2B+u_3C|^2,  \end{align*}
			where $A,B, C$ are as in the proof of Lemma \ref{dete}.
			Remember now that $C$ is orthogonal to $A$ and $B$ and thus the  above equation becomes
			\begin{align*}
				|D\mathcal{Z}_{\nu}(x_1,x_2,0)|^2&=\nu^2\sup_{|u|=1}\left(|u_1A+u_2B|^2+|u_3C|^2\right)\\&=\nu^2\sup_{|u|=1}\left(\left|Dh(x)(u_1,u_2)\right|^2+|u_3|^2|C|^2\right)\\&\leq\nu^2\sup_{|u|=1}\left(L^2|(u_1,u_2)|^2+\lambda^2|u_3|^2\right)\\&\leq\nu^2\max\{L^2,\lambda^2\},
			\end{align*}
			where we have used the fact that $h$ is locally bi-Lipschitz.

			Hence, $|D\mathcal{Z}_{\nu}(x)|\leq \nu\max\{L,\lambda\}e^{x_3}$ and (\ref{ineq}) becomes \begin{align*}|\mathcal{Z}_{\nu}^n(y_1)-\mathcal{Z}_{\nu}^n(y_2)|\leq& \nu^n\max\{L,\lambda\}^n\sup_{x\in B(0,r)}e^{p_3(\mathcal{Z}_{\nu}^{n-1}(x))}\cdots \sup_{x\in B(0,r)}e^{p_3(x)}|y_1-y_2|\\=&\left(\frac{\max\{L,\lambda\}}{\lambda}\right)^n E_{\nu\lambda}(r)\cdots E_{\nu\lambda}^n(r)|y_1-y_2|,\end{align*}
			where we have used the fact that $\nu\lambda\sup_{x\in B(0,r)}e^{p_3(\mathcal{Z}_{\nu}^{n}(x))}=E_{\nu\lambda}^{n+1}(r)$ which can be easily proved by induction on $n$.
		\end{proof}
		\begin{proof}[Proof of Theorem \ref{main}]
			First, let us note that by assumption $\nu>\sqrt{\frac{2L}{\lambda}}>\frac{1}{\lambda e}$. Let $V$ be any open and connected set of $\mathbb{R}^3$. We want to show that $\mathcal{Z}_{\nu}^n(V)$ intersects one of the planes that belong to the Julia set for some $n$ and thus $V$ itself intersects the Julia set. Assume that this does not happen. By Lemma \ref{axis} now we can consider two cases \\\\\textbf{\underline{First Case}}\\
			Suppose first that the sequence of iterates $\mathcal{Z}_{\nu}^n(V)$ does not eventually stay inside the square beam $B_{(0,0)}\cup B_{(0,-1)}$ but it also visits their complement infinitely often. Then we can find a subsequence $n_j$ such that $\mathcal{Z}_{\nu}^{n_j}(V)\in B_{(0,0)}\cup B_{(0,-1)}$ and $\mathcal{Z}_{\nu}^{n_j+1}(V)\in B_{(k,l)}$ for some $(k,l)\not =(0,0),(0,-1)$. Without loss of generality we may assume that $\mathcal{Z}_{\nu}^{n_j}(V)\in B_{(0,0)}$. 
			
			Consider now the sets \[V_{n_j}^+=\{x\in V: |(p\circ \mathcal{Z}_{\nu}^{n_j})(x)|\geq \frac{\lambda}{2}\}\] and \[V_{n_j}^-=\{x\in V: |(p\circ \mathcal{Z}_{\nu}^{n_j})(x)|<\frac{\lambda}{2}\},\]($\lambda$ is the scale factor by which we scaled up the initial square). Notice that $V=V_{n_j}^+\cup V_{n_j}^-$. Since $\mathcal{Z}_{\nu}^{n_j+1}(V)$ is outside of $B_{(0,0)}\cup B_{(0,-1)}$ we will have that $\mathcal{Z}_{\nu}^{n_j}(V)$ lies between two "level surfaces" $S_k$ and $S_{k+1}$ but we know, from Lemma \ref{surfacelemma}, that those surfaces enclose $\leq M_0$ volume between them, where $M_0$ is a constant. Thus $m(\mathcal{Z}_{\nu}^{n_j}(V))\leq M_0$, where $m$ denotes the Lebesgue measure. Also by Lemma \ref{dete} it is true that for almost all points in $V_{n_j}^+$ 
			\[\det\left(D\mathcal{Z}_{\nu}^{n_j}\right)\geq{\left(\frac{\lambda }{L^5}\right)}^{n_j}\frac{1}{\lambda^{3}}|(p\circ \mathcal{Z}_{\nu}^{n_j})(x)|^3\geq{\left(\frac{\lambda }{L^5}\right)}^{n_j}\frac{1}{8}.
			\]
			Hence, because $\mathcal{Z}_{\nu}^{n_j}$ is a homeomorphism in $V$ we have that \[M_0\geq m(\mathcal{Z}_{\nu}^{n_j}(V_{n_j}^+))=\int_{V_{n_j}^+} |\det(D\mathcal{Z}_{\nu}^{n_j})|dm\geq{\left(\frac{\lambda }{L^5}\right)}^{n_j}\frac{1}{8}\cdot m(V_{n_j}^+).\]
			This implies that $m(V_{n_j}^+)\to 0$ as $n_j\to \infty$. On the other hand, the set $\mathcal{Z}_{\nu}^{n_j}(V_{n_j}^-)$ is inside the initial square beam $[-\lambda,\lambda]^2\times \mathbb{R}$. Thus $\mathcal{Z}_{\nu}^{n_j+1}(V_{n_j}^-)$ lies in the half space $x_3>0$ and outside the square beam $B_{(0,0)}\cup B_{(0,-1)}$. This same set also lies between some level surfaces or below all of them. Moreover, as we proved in Lemma \ref{surfacelemma} the volume enclosed by those successive surfaces and the volume enclosed by the first one and the plane $x_3=0$ is smaller than some constant $M_0$. Thus $m (\mathcal{Z}_{\nu}^{n_j+1}(V_{n_j}^-))\leq M_0$. By arguing the same way as before now we get that \[M_0\geq m(\mathcal{Z}_{\nu}^{n_j+1}(V_{n_j}^-))=\int_{V_{n_j}^-} |\det(D\mathcal{Z}_{\nu}^{n_j+1})|dm\geq{\left(\frac{\lambda }{L^5}\right)}^{n_j+1}\frac{1}{8}\cdot m(V_{n_j}^-).\] 
			This again implies that $ m(V_{n_j}^-)\to 0$ as $n_j\to \infty$. But this is a contradiction since $m(V)=m(V_{n_j}^-)+m(V_{n_j}^+)$.
			\\\\\textbf{\underline{Second Case}}\\
			Suppose now that $\mathcal{Z}_{\nu}^n(V)\in B_{(0,0)}\cup B_{(0,-1)}$, for all $n>N_0$. Observe that either $\mathcal{Z}_{\nu}(B_{(0,0)})\subset\{(x_1,x_2,x_3):x_2\leq x_1\}$ or $\mathcal{Z}_{\nu}(B_{(0,0)})\subset\{(x_1,x_2,x_3):x_2\geq x_1\}$.  In the first case  $\mathcal{Z}_{\nu}^n(V)$ stays in $B_{(0,0)}$  for all large $n$ or it stays in  $B_{(0,-1)}$  while in the second it alternates between $B_{(0,0)}$ and $B_{(0,-1)}$. For simplicity we will assume that the first case holds and thus $\mathcal{Z}_{\nu}^n(V)\in B_{(0,0)}$, for all $n>N_0$.

			Let us now consider the inverse image under $\mathcal{Z}_{\nu}$ of the boundary of $B_{(0,0)}$, that lies inside $B_{(0,0)}$, namely the surface $S_0$ we had in the proof of Lemma \ref{surfacelemma}. Remember that this surface is defined as $S_0:=\{(x_1,x_2,x_3)\in B_{(0,0)}:x_3=f(x_1,x_2)\}$, where $f$ is continuous on $B_{(0,0)}\cap \{x_3=0\}$ and extends continuously on the boundary of this set except at the points $(0,0)$, $(2\lambda,0)$, $(0,-2\lambda)$ where $f\to \infty$.  Notice then that all the iterates $\mathcal{Z}_{\nu}^n(V)$ stay below the surface $S_0$.

			Consider now a plane $x_3=c$, with $c=E^{N}_{\nu\lambda}(0)-\lambda$ and $N$ so large that this plane intersects this surface $S_0$ and also \begin{equation}\label{key}  (c+\lambda)^{\log (c+\lambda)+1}e^{c+\lambda}\nu^2\lambda^2 e^{-\frac{\nu\lambda e^c}{2}}\leq \lambda.\end{equation}

			We define now sets $A_1$, $A_2$ and $A_3$ as follows:
			\begin{itemize}
				\item $A_1:=\{(x_1,x_2,x_3)\in B_{(0,0)}:c<x_3<f(x_1,x_2)\hspace{2mm}\text{and}\hspace{2mm} (x_1,x_2)\hspace{2mm}\text{in a neighbourhood of}\hspace{2mm}(0,0)\}$.
				\item $A_2:=\{(x_1,x_2,x_3)\in B_{(0,0)}:c<x_3<f(x_1,x_2)\hspace{2mm}\text{and}\hspace{2mm} (x_1,x_2)\hspace{2mm}\text{in a neighbourhood of}\hspace{2mm}(2\lambda,0)\}$.
				\item $A_3:=\{(x_1,x_2,x_3)\in B_{(0,0)}:c<x_3<f(x_1,x_2)\hspace{2mm}\text{and}\hspace{2mm} (x_1,x_2)\hspace{2mm}\text{in a neighbourhood of}\hspace{2mm}(0,-2\lambda)\}$.
			\end{itemize}

			We now have that:
			\begin{enumerate}[label=(\roman*)]
				\item All those sets lie below (in terms of $x_3$ coordinate) the surface $S_0$.
				By the definitions and Lemma \ref{surfacelemma} it is  easy to see that the sets $A_1$, $A_2$ and $A_3$ are also of finite Lebesgue measure.
				\item $\mathcal{Z_{\nu}}(A_2\cup A_3)\subset \{(x_1,x_2,x_3)\in\mathbb{R}^3:x_3< -\frac{\nu \lambda e^c}{2}\}$ and thus $\mathcal{Z}^2_{\nu}(A_2\cup A_3)\subset B(0,\delta), $ where $\delta= \nu\lambda e^{-\frac{\nu \lambda e^c}{2}}$. Note that $\delta<\nu\lambda=E_{\nu\lambda}(0)$.
				\item It is easy to show by induction on $N$ and since $\nu\lambda>1/e$ that $E_{\nu\lambda}^N(0)\geq E_{\nu\lambda}(N-1)$ and thus \begin{equation}\label{use}
					E_{\nu\lambda}(N-1)\leq c+\lambda\Rightarrow N \leq \log(c+\lambda)+2.
				\end{equation}
				
			\end{enumerate}

			By Lemma \ref{lipschitz} and since $\lambda> L^5$ we will now have that for all $x\in B(0,\delta)$ \begin{align*}|\mathcal{Z}_{\nu}^N(x)-\mathcal{Z}_{\nu}^N(0)|\leq&  E_{\nu\lambda}(\delta)\cdots E_{\nu\lambda}^N(\delta)|x|\\\leq&E^2_{\nu\lambda}(0)\cdots E_{\nu\lambda}^{N+1}(0)\delta\\\leq&(c+\lambda)^{N-1}E_{\nu\lambda}(\lambda+c)\delta\\\leq&(c+\lambda)^{\log(c+\lambda
					)+1}\nu\lambda e^{\lambda+c}\delta.\end{align*}
			
			Hence, by (\ref{key}) we will have that \begin{equation}\label{eq2}
				|\mathcal{Z}_{\nu}^N(x)-\mathcal{Z}_{\nu}^N(0)|\leq \lambda.
			\end{equation}

			Equation (\ref{eq2}) together with (ii) implies that $\mathcal{Z}_{\nu}^{N+2}(A_2\cup A_3)\subset B(\mathcal{Z}_{\nu}^{N}(0),\lambda)$ and by the choice of $c$ this last ball is contained in $\{(x_1,x_2,x_3)\in\mathbb{R}^3:x_3>c\}$. This implies that the part of $\mathcal{Z}_{\nu}^{N+2}(A_2\cup A_3)$ that lies below $S_0$ is contained in $A_1$.

			Recall that	 $\mathcal{Z}_\nu^n(V)$ stays below $S_0$ for $n>N_0$.
			By Lemma \ref{newlemma} now we know that there is a point $x_0\in \mathcal{Z}_{\nu}^{n_0}(V)$ for some $n_0>N_0$ such that  $x_0\in A_1$. Take such a point $x_0$. We now consider the iterates of this point. Let us examine the behaviour of those iterates more carefully. We can assume that $A_1$ is so close to the $x_3$-axis that if $y\in A_1$  then $p_3(\mathcal{Z}_{\nu}(y))>p_3(y)$ by Lemma \ref{upup}(a). Hence the points $\mathcal{Z}_{\nu}^n(x_0)$  go higher and higher up in the $x_3$ direction, while at the same time staying in $A_1$, until at some point the iterate $\mathcal{Z}_{\nu}^k(x_0)$, for some $k$, will lie in either $A_2$ or $A_3$ thanks to Lemma \ref{upup}(b).  Without loss of generality assume that $x_1:=\mathcal{Z}_{\nu}^{k}(x_0)\in A_2$  and take a small ball around $x_1$, $B(x_1,r)$ with $B(x_1,r)\subset A_2\cap \mathcal{Z}^{n_0+k}_{\nu}(V)$.  By what we have said in the previous paragraph now, we will have that $\mathcal{Z}_{\nu}^{N+2}(B(x_1,r))\subset A_1$.

			However, we know what happens in points inside $A_1$ when we iterate, they eventually leave $A_1$. Thus for some $k>N+2$ we will have that $\mathcal{Z}_{\nu}^{k}(B(x_1,r))\subset A_2 \cup A_3$ since $B(x_1,r)$ is a connected set and the sets $A_1$, $A_2$ and $A_3$ are disjoint. We can then repeat this whole argument, meaning take the set $\mathcal{Z}_{\nu}^{k}(B(x_1,r))$ which is now in $A_2$ or $A_3$ and thus will get mapped by $\mathcal{Z}_{\nu}$ to the lower half space $x_3<-\frac{\nu \lambda e^c}{2}$ and by $\mathcal{Z}_{\nu}^{N+2}$ inside $A_1$. Now continue as above and then repeat. Eventually we get a sequence $n_j\to\infty$ with $$\mathcal{Z}_{\nu}^{n_j}(B(x_1,r))\subset A_2 \cup A_3.$$ Then by using Lemma \ref{metro} we will have that $m(\mathcal{Z}_{\nu}^{n_j}(B(x_1,r)))\to \infty$ but that is impossible since $m(A_2\cup A_3)$ is finite.
		\end{proof}
		\section{Escaping set of the Zorich maps}
		In this section we prove that the escaping set is connected for those  Zorich maps, for which Theorem \ref{main} holds. Note that we  assume that $\lambda>L^5$ and $\nu>\sqrt{\frac{2L}{\lambda}}$. 
		
		The proof of this theorem closely follows Rempe's proof for the connectivity of the escaping set of the exponential family in \cite{REMPE2010}. Before we begin with the proof we need to define a few things. First, in this section, for simplicity and without loss of generality we will assume that our Zorich map sends $B_{(0,0)}$ in the half space $\{(x_1,x_2,x_3)\in\mathbb{R}^3:x_2\leq x_1\}$.  Let \[\mathbb{H}_0:=\{(x_1,x_2,x_3)\in\mathbb{R}^3:x_2<x_1\hspace{2mm}\text{and}\hspace{2mm}x_2>-x_1\}\] and similarly\[\mathbb{H}_1:=\{(x_1,x_2,x_3)\in\mathbb{R}^3:x_2>x_1\hspace{2mm}\text{and}\hspace{2mm}x_2>-x_1\}\] \[\mathbb{H}_2:=\{(x_1,x_2,x_3)\in\mathbb{R}^3:x_2>x_1\hspace{2mm}\text{and}\hspace{2mm}x_2<-x_1\}\]\[\mathbb{H}_3:=\{(x_1,x_2,x_3)\in\mathbb{R}^3:x_2<x_1\hspace{2mm}\text{and}\hspace{2mm}x_2<-x_1\}.\] Also, let  $T_0=T_{(0,0)}:=B_{(0,0)}\cap\mathbb{H}_0$ and \[T_{(i,j)}=T_{(0,0)}+i(\lambda,\lambda,0)+j(\lambda,-\lambda,0),\hspace{2mm}i,j\in\mathbb{Z}.\]
		
		Note that
		$T_1:=T_{(0,-1)}=B_{(0,-1)}\cap\mathbb{H}_1$, $T_2:=T_{(-1,-1)}=B_{(0,-1)}\cap\mathbb{H}_2$ and $T_3:=T_{(-1,0)}=B_{(0,0)}\cap\mathbb{H}_3$ and thus $T_{i}\subset \mathbb{H}_i,$ $i=0,1,2,3$. Define now $\Lambda_i :\mathbb{H}_i\to T_i$, $i=0,1,2,3$  to be the inverse branches of $\mathcal{Z}_{\nu}$ in $T_i$. We can extend those maps to $\overline{\mathbb{H}_i}\setminus\{0\}$ for $i=0,1,2,3$ and again those extended maps are injective. We will use the same symbols, $\Lambda_i$ to denote these extended maps.

		Now take $\gamma_0:=\{(0,0,x_3):x_3<0\}$ and inductively define \[\gamma_{k}:=\Lambda_0(\gamma_{k-1}),\] for all $k\geq 1$. Each of the sets $\gamma_{k}$, $k\geq 1$, is an injective curve inside $\overline{T_0}$. 
		
		We define now the set $\Gamma_0$  by \[\Gamma_0:=\bigcup_{k\geq0}\gamma_{k}.\]
		\begin{lemma}\label{con}
			If $U\subset \mathbb{R}^3$ is any open set with $U\cap \overline{\Gamma_0}\not=\emptyset$ then there is a $k_0\in\mathbb{N}$ with $\gamma_k\cap U\not=\emptyset$ for all $k>k_0$. In particular, $\bigcup_{k\geq k_0}\gamma_k$ is dense in $\overline{\Gamma_0}$.
		\end{lemma}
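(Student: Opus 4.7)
My plan is to prove the lemma in two steps: first reduce to the case when the open set $U$ meets $\gamma_0$ directly, and then use the blow-up property from Theorem \ref{main} combined with a branch-tracking argument.

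For the reduction, let $x \in U \cap \overline{\Gamma_0}$. Since $\Gamma_0 = \bigcup_k \gamma_k$ is dense in its closure, I may replace $x$ by a nearby point and assume $x \in \gamma_{k_1}$ for some $k_1 \geq 0$. I then take a small open ball $B \subset U$ around $x$ contained in the interior of $T_0^{(k_1)} := \Lambda_0^{k_1}(\mathbb{H}_0)$, so that $\mathcal{Z}_\nu^{k_1}$ restricts to a homeomorphism $B \to B'$, where $B' := \mathcal{Z}_\nu^{k_1}(B)$ is open and meets $\gamma_0$ at $\mathcal{Z}_\nu^{k_1}(x)$. Injectivity of the inverse branch $\Lambda_0^{k_1}$ on $B'$ gives the identity $\gamma_k \cap B = \Lambda_0^{k_1}(\gamma_{k - k_1} \cap B')$ for all $k \geq k_1$, so the lemma reduces to showing that every open set meeting $\gamma_0$ eventually meets each $\gamma_m$ for $m$ sufficiently large.

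For this reduced statement, fix $y \in B' \cap \gamma_0$ and a ball $W = B(y, \varepsilon) \subset B'$. I first observe that $y$ lies on the boundary of $T_0^{(m)}$ for every $m \geq 1$: the negative $x_3$-axis $\gamma_0$ is on $\partial T_0$ (the edge shared with other wedges), and this boundary structure is preserved under iteration by $\Lambda_0$, so the closure of every $T_0^{(m)}$ includes parts of $\gamma_0$ up to the origin. Consequently $W \cap \overline{T_0^{(m)}}$ is non-empty for each $m$ and contains a small open subset of $T_0^{(m)}$ near $y$. Its image $\mathcal{Z}_\nu^m(W \cap \overline{T_0^{(m)}})$ is a subset of $\overline{\mathbb{H}_0}$ containing $\mathcal{Z}_\nu^m(y) = (0, 0, E_{\nu\lambda}^m(y_3))$, a point escaping to $+\infty$ along the positive $x_3$-axis. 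The strengthened form of Theorem \ref{main} states that $\bigcup_n \mathcal{Z}_\nu^n(W)$ covers $\mathbb{R}^3 \setminus \{0\}$; applying this to the smaller open set $W \cap T_0^{(m)}$ (and using surjectivity of $\mathcal{Z}_\nu$ onto $\mathbb{R}^3 \setminus \{0\}$ to upgrade "union covers" to "individual iterates cover once the union does") yields, for all $m$ past some threshold $m_0$, a point $q_m \in \gamma_0 \cap \mathcal{Z}_\nu^m(W \cap \overline{T_0^{(m)}})$. Then $\Lambda_0^m(q_m) \in W \cap \gamma_m$, as required.

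The main obstacle is the branch-tracking in this final step: one must ensure that among the many $\mathcal{Z}_\nu^m$-preimages of a selected $q_m \in \gamma_0$, the one given by the specific inverse chain $\Lambda_0^m$ lies inside $W$. This is handled by combining the contraction estimate $|\Lambda_0(a) - \Lambda_0(b)| \leq \alpha |a - b|$ on the high-$x_3$ region (iterated to give uniform control on $\Lambda_0^m$) with the geometric fact that $W$ straddles $\partial T_0^{(m)}$ near $y$, so the contraction keeps the chosen inverse branch from escaping $W$ once the selection is made near $y$.
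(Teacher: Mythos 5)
The core difficulty here is ensuring not only that forward iterates of a neighbourhood of $\overline{\Gamma_0}$ meet $\gamma_0$ (or $\gamma_1$) but also that the \emph{specific} preimage of such a point, under the \emph{specific} inverse chain $\Lambda_0^m$, lands back in the original neighbourhood. You identify this obstacle correctly at the end, but the proposal does not overcome it. The step ``using surjectivity of $\mathcal{Z}_\nu$ onto $\mathbb{R}^3\setminus\{0\}$ to upgrade `union covers' to `individual iterates cover''' is not a valid inference: $\bigcup_n\mathcal{Z}_\nu^n(W)=\mathbb{R}^3\setminus\{0\}$ says nothing about any particular $\mathcal{Z}_\nu^m(W)$, and in your argument the relevant set $W\cap T_0^{(m)}$ even shrinks with $m$, so invoking Theorem \ref{main} for a moving target gives no uniform $m_0$. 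The final paragraph's appeal to the contraction of $\Lambda_0$ together with ``$W$ straddles $\partial T_0^{(m)}$'' gestures at the right tool but leaves the actual branch control unproved — precisely the content that needs to be established.

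There is also a flaw in the reduction step: a point $x\in\gamma_{k_1}=\Lambda_0^{k_1}(\gamma_0)$ lies on $\partial T_0^{(k_1)}$ (since $\gamma_0\subset\partial\mathbb{H}_0$ and $\Lambda_0^{k_1}$ is a homeomorphism taking boundary to boundary), so no open ball around $x$ can be contained in the interior of $T_0^{(k_1)}$. The reduction as written therefore does not go through.

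For contrast, the paper's argument sidesteps both issues by working forward with a single explicitly controlled branch: push $x_1\in U\cap\Gamma_0$ forward until $x_2:=\mathcal{Z}_\nu^{N_0}(x_1)$ lies high on the $x_3$-axis, then iterate the explicit expansion estimate of Lemma \ref{lemma0} for the branch $\Lambda_{(0,0)}$ to show $\mathcal{Z}_\nu^n(B(x_2,R)\cap H_{>M})$ contains a ball of radius $\alpha^{-n}R$ about $\mathcal{Z}_\nu^n(x_2)$, which for large $n$ meets the vertical line $\gamma_1$. The preimage of that $\gamma_1$-point under $\Lambda_{(0,0)}^n$ lies in $B(x_2,R)$ and (because $\Lambda_{(0,0)}$ and $\Lambda_0$ agree near the $x_3$-axis at high heights) on $\gamma_{n+1}$, and this point pulls back into $U$. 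Your approach is in the same spirit but substitutes the heavy covering result of Theorem \ref{main} for the precise, branch-aware Lemma \ref{lemma0}, and it is exactly that substitution which opens the gaps.
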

		\begin{proof}
			Let $x_0\in\overline{\Gamma_0}$, and $U$ a neighbourhood of this point. We want to show that $\gamma_{k}\cap U\not=\emptyset$ for all sufficiently large $k$. We know, from the definition of $\overline{\Gamma_0}$, that there is a point $x_1\in U\cap\Gamma_0 $. This implies that $\mathcal{Z}_{\nu}^n(x_1)$ belongs to the $x_3$-axis for all  $n\geq N_0$, for some $N_0\in\mathbb{N}$ and in fact we can assume that $x_2:=\mathcal{Z}_{\nu}^{N_0}(x_1)\in H_{>M},$ where $M$ is any positive number. Now taking $M>M_0$, where $M_0$ is the constant we used in Lemma \ref{lemma0} and  applying that lemma for a ball $B(x_2,R)\subset\mathcal{Z}_{\nu}^{N_0}(U)$ $n$ times we get \[\mathcal{Z}_{\nu}^{n}\left(B(x_2,R)\cap H_{>M}\right)\supset B\left(\mathcal{Z}^n_{\nu}(x_2),\alpha^{-n}R\right)\cap H_{>E^n_{\nu\lambda}(M)}.\]
			For all large enough $n$ now the ball on the right hand side, $B\left(\mathcal{Z}^n_{\nu}(x_2),\alpha^{-n}R\right)$, intersects the line $\gamma_1=\{(2\lambda,0,t):t\in\mathbb{R}\}$. Hence, for all large enough $n$, $\mathcal{Z}_{\nu}^n(U)$ intersects $\gamma_1$. Thus for each $n$ large enough there is a point $x_3\in\gamma_1$ whose backward orbit intersects $U$ itself. This means that  $U$ contains a point in $\gamma_{k}$ for all large enough $k$ as we wanted.
		\end{proof}
		
		\begin{lemma}
			The set $\Gamma_0$ is connected.
		\end{lemma}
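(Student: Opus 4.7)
My plan is to argue by contradiction, with Lemma \ref{con} as the crucial input. Suppose $\Gamma_0 = A \cup B$ where $A$ and $B$ are disjoint, nonempty, and both clopen in $\Gamma_0$. Because $B$ is closed in $\Gamma_0$ one has $\overline{B}^{\mathbb{R}^3} \cap \Gamma_0 = B$, so $A \cap \overline{B}^{\mathbb{R}^3} = \emptyset$; symmetrically $\overline{A}^{\mathbb{R}^3} \cap B = \emptyset$. Thus $A$ and $B$ are mutually separated as subsets of $\mathbb{R}^3$.

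The first step is to check that each $\gamma_k$ is a connected subset of $\overline{\mathbb{H}_0} \setminus \{0\}$. The initial set $\gamma_0 = \{(0,0,x_3) : x_3 < 0\}$ is a half-line, so it is connected and plainly lies in $\overline{\mathbb{H}_0} \setminus \{0\}$. Inductively, since $\Lambda_0$ is continuous on $\overline{\mathbb{H}_0} \setminus \{0\}$ and $\gamma_{k-1}$ lies in this set, the image $\gamma_k = \Lambda_0(\gamma_{k-1}) \subset \overline{T_0} \subset \overline{\mathbb{H}_0}$ is connected. To verify $0 \notin \gamma_k$, observe that $0 \in \gamma_k$ would force $\mathcal{Z}_{\nu}^k(0) \in \gamma_0$, whereas the forward orbit $\mathcal{Z}_{\nu}^j(0)$ for $j \geq 1$ is a sequence of points on the \emph{positive} $x_3$-axis, disjoint from the negative half-line $\gamma_0$. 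Hence each $\gamma_k$ is connected, and since $\gamma_k \subset \Gamma_0 = A \cup B$ with $A, B$ separated, $\gamma_k$ must be contained entirely in $A$ or entirely in $B$.

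To conclude, pick any $p \in A$. Since $A \cap \overline{B}^{\mathbb{R}^3} = \emptyset$, there is $\varepsilon > 0$ with $B(p,\varepsilon) \cap B = \emptyset$. As $p \in \Gamma_0 \subset \overline{\Gamma_0}$, Lemma \ref{con} applies to $U = B(p,\varepsilon)$ and yields $k_0$ such that $\gamma_k \cap B(p,\varepsilon) \neq \emptyset$ for every $k > k_0$. For any such $k$, the intersection point lies in $\Gamma_0$ but not in $B$, so $\gamma_k \subset A$. Applying the same argument to a point $q \in B$ produces $k_0'$ with $\gamma_k \subset B$ for all $k > k_0'$. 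Choosing $k > \max\{k_0, k_0'\}$ gives $\gamma_k \subset A \cap B = \emptyset$, a contradiction.

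I do not anticipate any serious obstacle: the analytic content sits entirely in Lemma \ref{con}, and the rest is the standard topological extraction that a dense family of connected sets cannot be split by a separation of the ambient union. The only care needed is the inductive verification that $\gamma_k$ is a genuine connected set avoiding the origin, which reduces to tracking the forward orbit of $0$ under $\mathcal{Z}_\nu$.
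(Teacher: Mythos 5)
Your proof is correct and takes essentially the same route as the paper: both rely on Lemma \ref{con} to show a neighbourhood of any point of $\Gamma_0$ must meet $\gamma_k$ for all large $k$, and both use the connectedness of each $\gamma_k$ to force the curves entirely into one side of a putative separation. The paper phrases this via the characterization that any open $U$ meeting $\Gamma_0$ with $\partial U \cap \Gamma_0 = \emptyset$ must contain $\Gamma_0$, while you use the equivalent clopen-decomposition formulation and additionally spell out the inductive check that each $\gamma_k$ is connected and avoids the origin, a detail the paper leaves implicit.
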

		\begin{proof}
			Suppose $U\subset\mathbb{R}^3$ is an open set with $U\cap\Gamma_0\not=\emptyset$ and $\Gamma_0\cap\partial U=\emptyset$. We show that $\Gamma_0\subset U$.
			
			By Lemma \ref{con} we have that $\gamma_k\cap U\not=\emptyset$, for all $k>k_0$. Since $\gamma_k$ is a connected curve this implies that $\gamma_k\subset U$, for all $k> k_0$. Thus \[\Gamma_0\subset \overline{\Gamma_0}= \overline{\bigcup_{k\geq k_0}\gamma_k}\subset \overline{U}.\]
			Hence, since $\Gamma_0\cap\partial U=\emptyset$, we have that $\Gamma_0\subset U$.
		\end{proof}
		Similarly now we can define sets $\Gamma_i$, for $i=1,2,3$ using this time $\Lambda_i$ instead of $\Lambda_0$ and prove that $\Gamma_i$ is also connected. This implies that the union $\Gamma:=\bigcup_{i=0}^3\Gamma_i$ is a connected set. We define now the set\[Y:=\bigcup_{(k,l)\in\mathbb{Z}^2}\left(\Gamma+k(2\lambda,2\lambda,0)+l(2\lambda,-2\lambda,0)\right),\] which is  connected since $\Gamma$ contains the lines $\{(\pm2\lambda,0,t):t\in\mathbb{R}\}$, $\{(0,\pm 2\lambda,t):t\in\mathbb{R}\}$ and is a subset of $I(\mathcal{Z}_{\nu})$ since the iterates of any point eventually land on the $x_3$-axis. Next we define, inductively, the sets $Y_j\subset I(\mathcal{Z}_{\nu})$ by setting $Y_0=Y$ and $Y_{j+1}=\mathcal{Z}_{\nu}^{-1}(Y_j)\cup Y_j$.
		
		\begin{lemma}
			The sets $Y_j$ are connected for all $j\geq0$.
		\end{lemma}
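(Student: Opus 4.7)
The proof is by induction on $j$. The base case $j=0$ is the connectedness of $Y_0=Y$, established in the paragraph preceding the lemma.

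For the inductive step, suppose $Y_j$ is connected. To show that $Y_{j+1}=Y_j\cup\mathcal{Z}_\nu^{-1}(Y_j)$ is connected, it suffices to verify that every connected component of $\mathcal{Z}_\nu^{-1}(Y_j)$ meets $Y_j$. For each fundamental domain $\overline{T_{(i,j)}}$, the map $\mathcal{Z}_\nu$ restricts to a homeomorphism onto the corresponding closed half-space $\overline{\mathbb{H}}$, with continuous inverse branch $\Lambda_T:\overline{\mathbb{H}}\setminus\{0\}\to\overline{T}$. Hence
\[
\mathcal{Z}_\nu^{-1}(Y_j)\cap\overline{T} \;=\; \Lambda_T\bigl(Y_j\cap(\overline{\mathbb{H}}\setminus\{0\})\bigr),
\]
and the connected components on the two sides correspond bijectively under $\Lambda_T$. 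My plan is to establish: (a) $Y_j\cap\overline{\mathbb{H}}$ is connected for every closed half-space $\overline{\mathbb{H}}$ (so each $\mathcal{Z}_\nu^{-1}(Y_j)\cap\overline{T}$ is connected); and (b) $\Lambda_T(\gamma_0)\subset Y\subset Y_j$ for every $T$, providing the attaching curve.

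Claim (b) follows from a direct computation: $\mathcal{Z}_\nu^{-1}(\gamma_0)$ consists precisely of the vertical lines $\{(2m\lambda,2n\lambda,t):t\in\mathbb{R}\}$ for integers $m,n$ with $m+n$ odd (the preimages of the south pole of $h$). These are exactly the lines through the vertices of the fundamental diamonds, and each of them appears in $Y$ as a translate of $\gamma_1=\{(2\lambda,0,t):t\in\mathbb{R}\}$ or one of its analogues $\gamma_1^{(1)}=\{(0,2\lambda,t)\}$, $\gamma_1^{(2)}=\{(-2\lambda,0,t)\}$, $\gamma_1^{(3)}=\{(0,-2\lambda,t)\}$ under the lattice $(2\lambda,2\lambda,0)\mathbb{Z}+(2\lambda,-2\lambda,0)\mathbb{Z}$ used in the construction of $Y$. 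Since $\gamma_0\subset\partial\mathbb{H}$ for every half-space, $\Lambda_T(\gamma_0)$ is a well-defined curve in $\overline{T}$, and the observation above gives $\Lambda_T(\gamma_0)\subset\mathcal{Z}_\nu^{-1}(\gamma_0)\subset Y\subset Y_j$.

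Claim (a) is the main technical difficulty and is proved by a strengthened induction asserting simultaneously that $Y_j$ is connected and that $Y_j\cap\overline{\mathbb{H}}$ is connected for every closed half-space $\overline{\mathbb{H}}$. For the base case $j=0$, one verifies that $Y\cap\overline{\mathbb{H}}$ is connected by noting that the anchor lines of $Y$ lying in $\overline{\mathbb{H}}$ form a connected fan emanating from the origin, and each translate of $\Gamma$ meeting $\overline{\mathbb{H}}$ attaches to this fan through a shared anchor line contained in $\overline{\mathbb{H}}$. For the inductive step, $Y_{j+1}\cap\overline{\mathbb{H}}$ is obtained from the connected set $Y_j\cap\overline{\mathbb{H}}$ by adjoining the pieces $\Lambda_T(Y_j\cap\overline{\mathbb{H}'})$ for fundamental domains $T\subset\overline{\mathbb{H}}$ (where $\overline{\mathbb{H}'}=\mathcal{Z}_\nu(\overline{T})$); each such piece is connected by the strengthened inductive hypothesis applied to $\mathbb{H}'$, and attaches to $Y_j\cap\overline{\mathbb{H}}$ via $\Lambda_T(\gamma_0)\subset Y$ as in (b). Combining (a) and (b) closes the induction and completes the proof.
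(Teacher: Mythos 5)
Your proposal follows essentially the same inductive scheme as the paper, but you have correctly identified a subtlety that the paper glosses over. The paper writes ``because $\Lambda_{k,l}$ is continuous the set $\Lambda_{k,l}(Y_j)$ is also connected,'' justifying this only by the inductive hypothesis that $Y_j$ is connected. But $\Lambda_{k,l}$ is defined only on the closed wedge $\overline{\mathbb{H}_p}\setminus\{0\}$, so what is really meant is $\Lambda_{k,l}\bigl(Y_j\cap(\overline{\mathbb{H}_p}\setminus\{0\})\bigr)$, whose connectedness does \emph{not} follow from connectedness of $Y_j$ together with continuity of $\Lambda_{k,l}$ alone --- one must also know that $Y_j\cap\overline{\mathbb{H}_p}$ is connected. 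Your strengthened induction (simultaneously asserting connectedness of $Y_j$ and of $Y_j\cap\overline{\mathbb{H}_p}$ for each of the four wedges) is the natural way to close this gap, and the inductive step you sketch does close: each piece $\Lambda_T(Y_j\cap\overline{\mathbb{H}'})$ is connected by the strengthened hypothesis and is attached to $Y_j\cap\overline{\mathbb{H}}$ by the curve $\Lambda_T(\gamma_0)$. Your Claim (b) is correct (the preimage of $\gamma_0$ is precisely the vertical lines through centers $(2\lambda r_1,2\lambda r_2)$ with $r_1+r_2$ odd, which lie in $Y$), and in fact gives a cleaner attaching mechanism than the paper's single points $x_{n,m}$, whose membership in $Y$ the paper over-asserts (only the $x_{n,m}$ with $n+m$ even lie in $Y$, though that is enough for the paper's purpose since only one is needed per wedge).

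The weak point of your argument is the base case of Claim (a). Your description of the anchor lines of $Y$ as ``a connected fan emanating from the origin'' does not match the geometry: these anchors are parallel vertical lines at the lattice points $(2\lambda r_1,2\lambda r_2,0)$, $r_1+r_2$ odd, not rays from the origin. The connectedness of $Y\cap\overline{\mathbb{H}}$ requires a genuine argument about how translates of $\Gamma$ meeting the closed wedge glue to each other \emph{within} the wedge (via shared anchor lines that themselves lie in $\overline{\mathbb{H}}$, and via the parts of each $\Gamma$-translate lying on the wedge's boundary planes). This should be worked out more carefully before the induction can be considered closed; as written, it is a plausible sketch but not a proof. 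Modulo that expansion, your argument is sound and, because it explicitly addresses the domain restriction of the inverse branches, is more complete than the proof in the paper.
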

		\begin{proof}
			We will prove this by induction on $j$. Let us define the inverse branches of $\mathcal{Z}_{\nu}$. By using the notation we introduced in the first paragraphs of this section  define $\Lambda_{k,l}:\mathbb{H}_p\to T_{(k,l)}$, with $p=0,1,2,3$ to be the inverse branches of $\mathcal{Z}_{\nu}$ that take values on the square beams $T_{(k,l)}$. We also extend those maps to $\overline{\mathbb{H}_p}\setminus\{0\}$ and use the same symbol to denote those extensions. With that notation we have that \[Y_{j+1}=\bigcup_{(k,l)\in\mathbb{Z}^2}\Lambda_{k,l}(Y_j)\cup Y_j.\]  By the inductive hypothesis now we know that $Y_j$ is connected and because $\Lambda_{k,l}$ is continuous the set  $\Lambda_{k,l}(Y_j)$ is also connected. Observe now that the point $x_{n,m}=(2\lambda,0,0)+n(2\lambda,0,0)+m(0,2\lambda,0)$ is inside $Y=Y_0$  and thus in $Y_j$, for all $n,m\in \mathbb{Z}$ and for all $j\in\mathbb{N}$. Also note that $\mathcal{Z}_{\nu}(x_{n,m})=(0,0,-\nu\lambda)$ or $(0,0,\nu\lambda)$  which are both points in $Y_j$. This means that there are $m, n$ depending on $k,l$ such that $x_{n,m}\in\Lambda_{k,l}(Y_j)$. Hence $\Lambda_{k,l}(Y_j)\cap Y_j\not=\emptyset$. This implies that the set $\Lambda_{k,l}(Y_j)\cup Y_j$ is connected. Hence $Y_{j+1}$ is connected as a union of connected sets with non-empty intersections with each other as  we wanted.
		\end{proof}
		\begin{proof}[Proof of Theorem \ref{escaping}]
			Consider the set \[\bigcup_{j\geq 0}\mathcal{Z}_{\nu}^{-j}((0,0,-1))\subset\bigcup_{j\geq 0}Y_j .\] The Zorich map is bounded on $\{(x_1,x_2,x_3):x_3<0\}$ and thus it does not have the pits effect (see \cite{B-Nicks}).  Hence, by \cite[Theorem 1.8]{B-Nicks} we will have that the set $\bigcup_{j\geq 0}\mathcal{Z}_{\nu}^{-j}((0,0,-1))$ is dense in $\mathcal{J}(\mathcal{Z}_\nu)$, which by Theorem \ref{main} is $\mathbb{R}^3$, and thus also dense in $I(\mathcal{Z}_{\nu})$. Thus the set $\bigcup_{j\geq 0}Y_j $ is a connected dense subset of $I(\mathcal{Z}_{\nu})$ which implies that the escaping set itself is connected.
		\end{proof}
		
		\section{Density of periodic points}
		\begin{proof}[Proof of Theorem \ref{density}]
			First let $U_0=B(x_0,r)$ be a ball centred at  $x_0\in\mathbb{R}^3$ of radius $r>0$. We seek a periodic point of $\mathcal{Z}_\nu$ in $U_0$. Without loss of generality we may assume that $\overline{U_0}$ does not intersect any of the planes $x_1=\pm x_2+2\lambda k$, $k\in\mathbb{Z}$. 
			
			We will follow the method of \cite{Fletcher2013} where the authors prove that periodic points of a quasiregular version of the sine function are dense on $\mathbb{R}^3$. We will do this by finding an $N\in\mathbb{N}$ and a finite sequence of open sets $U_j$, $j=0,\dots, N$ such that 
			\begin{enumerate}[label=(\roman*)]
				\item $U_{j+1}\subset \mathcal{Z_{\nu}}(U_j)$, $0\leq j\leq N-1$.
				\item $\mathcal{Z_{\nu}}$ is a homeomorphism on each $U_j$ for $j\leq N-1$.
				\item $\overline{U_0}\subset U_N$.
			\end{enumerate} 
			If these conditions are met then we can define a continuous inverse branch  $\mathcal{Z}^{-N}_{\nu}:U_N\to U_0$. Thus by the Brouwer fixed point theorem the map ${\mathcal{Z}^{-N}_{\nu}}|_{U_0}$ has a fixed point in $U_0$.
			
			We will now show how we can construct such a sequence. By Theorem \ref{main} we know that $\mathcal{Z}^n_\nu(U_0)$ eventually covers $\mathbb{R}^3\setminus\{0\}$. We set $U_j=\mathcal{Z}^j_\nu(U_0)$  for all $j$ such that $\mathcal{Z}^j_\nu(U_0)$ does not intersect the set $P:=\bigcup_{k\in\mathbb{Z}}\{(x_1,x_2,x_3):x_1=\pm x_2+2k\lambda\}$. Let $n_0$ be the  biggest such $j$, so that we have defined $U_0,\dots, U_{n_0}$. Then take a point $y_1$ in $\mathcal{Z}^{n_0+1}(U_0)\cap P$ such that $y_1\not\in B_{\mathcal{Z}_\nu}$ and a ball $B(y_1,r)\subset \mathcal{Z}^{n_0+1}_\nu(U_0)\setminus B_{\mathcal{Z}_\nu}$, where we remind here that $B_{\mathcal{Z}_\nu}$ is the branch set. Set $U_{n_0+1}=B(y_1,r)$. We know that $\mathcal{Z}_\nu(U_{n_0+1})$ intersects one of the planes $x_1=\pm x_2$ and it is easy to see that $\mathcal{Z}_\nu$ is a homeomorphism on $U_{n_0+1}$. Assume, without loss of generality that it intersects $x_1=x_2$ and take $y_2\in( \mathcal{Z}_\nu(U_{n_0+1})\cap\{(x_1,x_2,x_3):x_1=x_2\})\setminus B_{\mathcal{Z}_\nu}$. Set $U_{n_0+2}=B(y_2,r_2)$, where $r_2>0$ is such that $B(y_2,r_2)\subset \mathcal{Z}_\nu(U_{n_0+1})\setminus B_{\mathcal{Z}_\nu}$.
			
			Consider the set $V_0=U_{n_0+2}\cap\{(x_1,x_2,x_3):x_1=x_2\}$ which is an open set of the plane $x_1=x_2$ in the subspace topology. We define the sets $V_n$ by induction as follows. Suppose that $V_n$ has been defined and that $V_n\cap B_{\mathcal{Z}_\nu}=\emptyset$. We consider now two cases:
			\begin{enumerate}
				\item $\mathcal{Z}_\nu(V_n)$  intersects one of the lines $\{(x_1,x_2,x_3):x_1=x_2=2\lambda k\}$, $k\in\mathbb{Z}$ which are the pre-images of the $x_3$ axis on the plane $x_1=x_2$.
				\item $\mathcal{Z}_\nu(V_n)$ does not intersect any of those lines.
			\end{enumerate} 
			In the first case, let $y_3$ be a point in such an intersection. We define $V_{n+1}$ to be an open ball around $y_3$ in the subspace topology of $x_1=x_2$ of radius  $r_3$ where $r_3$ is chosen in such a way that the ball does not contain branch points and such that $V_{n+1}\subset \mathcal{Z}_{\nu}(V_n)$.
			
			In the second case, we define $V_{n+1}:=\mathcal{Z}_\nu(V_n)\cap H_0$, where $H_0$ is the whole plane $x_1=x_2$ in case $\mathcal{Z}_\nu(V_n)\cap B_{\mathcal{Z}_\nu}=\emptyset$ and it is an open half plane on $x_1=x_2$, in any other case, which is defined as follows. Suppose that $\mathcal{Z}_\nu(V_n)$ intersects one of the lines of $B_{\mathcal{Z}_\nu}$ which we call $\ell_1$. We set $H_0$ to be the half plane defined by this line and the property \[m_2(\mathcal{Z}_\nu(V_n)\cap H_0)\geq \frac{1}{2}m_2(\mathcal{Z}_\nu(V_n)),\] where $m_2$ is the 2 dimensional Lebesgue measure on $x_1=x_2$. Note now that we have inductively defined the sets $V_n$. 
			
			We now claim that case (1) must occur for some $n$, otherwise notice that by construction $\mathcal{Z}_\nu$ is a homeomorphism on $V_n$, for all $n\in\mathbb{N}$. Hence, \begin{equation}\label{nixta}m_2(V_{n+1})=m_2(\mathcal{Z}_\nu(V_n)\cap H_0)\geq\frac{1}{2}m_2(\mathcal{Z}_\nu(V_n)).\end{equation} 
			Using the notation of section \ref{section3} we now have that \[m_2(\mathcal{Z}_\nu(V_n))=m_2((\phi^{-1}\circ g\circ \phi)(V_n))=Cm_2(g(\phi(V_n))), \]
			where  $C=|\det D\phi^{-1}|$ which is a constant since $\phi$ is linear.
			Combining with equation \eqref{nixta} this gives $$m_2(V_{n+1})\geq\frac{C}{2} m_2(g(\phi(V_n))).$$
			Thus by Lemma \ref{volume} we have that $m_2(V_n)\to \infty$, as $n\to \infty$. This implies, just like in the proof of Theorem \ref{plane}, that there is an $m_0$ such that $V_{m_0}$ intersects the $x_3$ axis.
			
			We now set $U_{n_0+2+i}$ to be the open set \[U_{n_0+2+i}:=\bigcup_{x\in V_i}B(x,r_x),\]

			where  $r_x\hspace{1mm}\text{is such that}\hspace{1mm} B(x,r_x)\subset\mathcal{Z}_\nu(U_{n_0+1+i})\hspace{1mm}\text{and}\hspace{1mm} B(x,r_x)\cap B_{\mathcal{Z}_\nu}=\emptyset$, for all $1~\leq~i~\leq~m_0$.
			
			Notice that the sets $U_{n_0+2+i}$ satisfy the properties (i) and (ii). We have that $U_{n_0+2+m_0}$ intersects the $x_3$ axis, so let $$U_{n_0+3+m_0}= \mathcal{Z}_\nu(U_{n_0+2+m_0})\cap B_{(0,0)}.$$
			
			Define also \begin{equation*}Q_{(0,0)}=\{(x_1,x_2):|x_1|+|x_2|<2\lambda \},\end{equation*} and $$Q_{(k,l)}=Q_{(0,0)}+k(2\lambda,2\lambda)+l(2\lambda,-2\lambda),\hspace{2mm}k,l\in\mathbb{Z}.$$
			
			We now set $$U_{n_0+2+m_0+j}=\mathcal{Z}_\nu(U_{n_0+1+m_0+j})\cap B_{(0,0)} ,$$ for all $2 \leq j\leq m_1$, where $m_1$, depending on $M>0$, is so large that $U_{n_0+2+m_0+m_1}$ contains a set of the form $Q_0\times [R,R+M]$, where $Q_0=Q_{(0,0)}\cap \{(x_1,x_2):x_1<x_2\}$ and  some $R>0$. We know that such an $m_1$ exists because the iterated image, under the Zorich map, of an open set that intersects the $x_3$-axis eventually contains a ball of radius as large as we want (see Lemma \ref{lemma0}).   
			
			If $M$ is large enough then $\mathcal{Z}_\nu(U_{n_0+2+m_0+m_1})$ will contain a set of the form \[U_{n_0+3+m_0+m_1}:=Q_{(k,l)}\times[-t_M,t_M],\]  for some $k,l\in\mathbb{Z}$ and  $t_M\to\infty$, as $M\to \infty$. Note that $\mathcal{Z}_\nu$ is a homeomorphism on $U_{n_0+2+m_0+j}$ for all $2\leq j\leq m_1+1$ and that $\mathcal{Z}_\nu(U_{n_0+3+m_0+m_1})$ will be the set \[U_N:=\{x\in\mathbb{R}^3:\nu\lambda e^{-t_M}< |x|< \nu\lambda e^{t_M}\}\setminus W ,\]
			where $W=\{(x_1,x_2,x_3):x_1=\pm x_2, \hspace{2mm}x_3\leq 0\}$.
			If $M$ is large enough $U_N$ will contain the closure of our initial set $U_0$, since $U_0$ does not intersect any of the planes $x_1=\pm x_2 +2\lambda k$, $k\in\mathbb{Z}$ and we are done. 
		\end{proof}
		\section{Generalized Zorich Maps}\label{pyramida}
		
		In this section we discuss a more general construction of Zorich maps. 
		The goal of this section is to sketch how to prove Theorem \ref{pyramid} by following the same methods we used in the proof of Theorem \ref{main} and highlight the most significant differences between the two cases.\\
		We start again with the square 
		\[Q=\Big \{(x_1,x_2)\in\mathbb{R}^2:|x_1|\leq 1,|x_2|\leq 1\Big \}.\] The $L$ bi-Lipschitz function $\mathfrak{h}_{gen}:Q\to\mathbb{R}^3$ maps this square to a surface $\mathcal{S}$ which satisfies the following:

		\begin{enumerate}
			\item The surface lies in the half space $\{(x_1,x_2,x_3):x_3\geq 0\}$.
			\item The boundary of $\mathcal{S}$ lies on the plane $x_3=0$.
			\item The ray that connects $(0,0,0)$ with $\mathfrak{h}_{gen}(x)$, $x\in Q$, intersects the surface $\mathcal{S}$ only at $\mathfrak{h}_{gen}(x)$.
			\item\label{itemangle} There is a $\theta_{_\mathcal{S}}\in(0,\pi/2)$ and $\varepsilon>0$ such that for all  points $w,z \in \mathcal{S}$ such that $|w-z|\leq \varepsilon$ the acute angle between the lines connecting $0$ with $z$ and $w$ with $z$ is greater than $\theta_{_\mathcal{S}}$. We will call this property the \textit{non-tangential position vector} property.
			
			\item $\min_{x\in Q}|\mathfrak{h}_{gen}(x)|>0$.
		\end{enumerate}

		\begin{remark2}
			We make two observations on the  non-tangential position vector property that we are going to need later.
			
			First we note that it implies that for all points $x\in\mathcal{S}$ for which a tangent plane to $\mathcal{S}$ is defined at $x$ (we know that this includes Lebesgue almost all points of $\mathcal{S}$) the angle between the vector $\mathfrak{h}_{gen}(x)$ and the plane is at least $\theta_{_\mathcal{S}}$. 
			
			Second, consider any straight line segment inside $Q$, which we can parametrize by $\phi(t),$ $t\in[0,1]$ and $\phi$ linear,  and consider $h(\phi([0,1]))$ which is a curve in $\mathcal{S}$ that admits a tangent line almost everywhere. The non-tangential position vector property now implies that the angle between the vector $h(\phi(t))$ and the tangent line at that point on the surface is again at least $\theta_{_\mathcal{S}}$. 
			
			We also note here that a similar condition to the   non-tangential position vector property was used by Nicks and Sixsmith in \cite{N-S} on the boundary of a domain in $\mathbb{R}^d$ in order to prove an extension theorem on bi-Lipschitz maps between domains.
		\end{remark2}	
		Again if $\mathfrak{h}_{gen}=(\mathfrak{h}_{gen,1},\mathfrak{h}_{gen,2},\mathfrak{h}_{gen,3})$ we require that $\mathfrak{h}_{gen,1}(x_1,x_1)=\mathfrak{h}_{gen,2}(x_1,x_1)$ and $\mathfrak{h}_{gen,1}(x_1,-x_1)=-\mathfrak{h}_{gen,2}(x_1,-x_1)$. For simplicity we will also assume that 
		\begin{equation}\label{eqgen}
			\mathfrak{h}_{gen}(0,0)=(0,0,1)\hspace{2mm}\text{and that }\hspace{2mm}
			\sup_{x\in Q}|\mathfrak{h}_{gen}(x)|~=~1.
		\end{equation} 
		Although the last two conditions are not needed for our methods to work, they make the arguments less arduous and more similar with the arguments we used in the more classical setting.

		We also rescale our map $\mathfrak{h}_{gen}$ by defining\[h_{gen}(x_1,x_2)=\lambda \mathfrak{h}_{gen}\left(\frac{1}{\lambda}(x_1,x_2)\right), \hspace{2mm}(x_1,x_2)\in \lambda Q.\]

		We then define \[\mathcal{Z}_{gen}(x_1,x_2,x_3)=e^{x_3}h_{gen}(x_1,x_2),\] on $\lambda Q\times \mathbb{R}$ and extend this map to the whole $\mathbb{R}^3$ through reflections.

		\begin{remark2}
			
			A case of particular interest is when the surface $\mathcal{S}$ is a square based pyramid. In this case we can be much more explicit and define the function $h_{pyr}:\lambda Q\to\mathbb{R}^3$, \[h_{pyr}(x_1,x_2):=\left(x_1,x_2,\lambda-\max\{|x_1|,|x_2|\}\right)\] which sends the square $\lambda Q$ to a pyramid with base $\lambda Q$ and height $\lambda$. We then define on $\lambda Q\times\mathbb{R}$ $$\mathcal{Z}_{pyr}(x_1,x_2,x_3)=e^{x_3}h_{pyr}(x_1,x_2)$$ and extend this map to all $\mathbb{R}^3$ in the same way we did with the classical Zorich map. 
			
			Also let us mention that in \cite{N-S} the authors used those kind of Zorich maps to construct  a quasiregular function in $\mathbb{R}^3$ which resembles $e^z+z$.
			
			For those maps we can prove (although we omit the proof), using the same methods,  the corresponding result to Theorem \ref{main} where we have a more explicit value for the scale factor.
		\end{remark2}
			\begin{theorem}
				For $\lambda> 2$ the Julia set $\mathcal{J}(\mathcal{Z}_{pyr})$  is the entire $\mathbb{R}^3$.
			\end{theorem}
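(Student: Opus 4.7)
The plan is to run the proof of Theorem~\ref{main} verbatim, with every appearance of the bi-Lipschitz constant $L$ replaced by the explicit numbers that the simple algebraic form of $h_{pyr}$ affords. The key observation is that $h_{pyr}$ has \emph{identity projection} onto the $x_1x_2$-plane, i.e.\ $p(h_{pyr}(x_1,x_2))=(x_1,x_2)$, and is piecewise affine on each of the four sectors of $\lambda Q$. A sector-by-sector computation of the $3\times 3$ Jacobian will give the clean equality $|\det D\mathcal{Z}_{pyr}(x)|=\lambda e^{3x_3}$ almost everywhere, and the trivial recursion $p(\mathcal{Z}_{pyr}(y))=e^{y_3}p(y)$ iterates to
\[
|\det D\mathcal{Z}_{pyr}^n(x)|=\lambda^n\,\frac{|p(\mathcal{Z}_{pyr}^n(x))|^3}{|p(x)|^3}\qquad\text{a.e.}
\]
Since it suffices to treat \emph{bounded} open sets $V$ (replace $V$ by an open ball inside it), the factor $|p(x)|^{-3}$ on $V$ is bounded below by a constant, and this identity plays the role of Lemma~\ref{dete} with the factor $(\lambda/L^5)^n$ replaced by $\lambda^n$. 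In particular it grows exponentially in $n$ as soon as $\lambda>1$, which is enough to drive the measure-decay arguments of both the First and Second Cases in the proof of Theorem~\ref{main}.

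The sharp constraint $\lambda>2$ will enter only through the pyramid analogue of Theorem~\ref{plane}. Under the same conjugacy $\phi$ of Section~\ref{section3}, the restriction of $\mathcal{Z}_{pyr}$ to the invariant plane $x_1=x_2$ becomes
\[
g(x+iy)=e^{\lambda x}\bigl((1-|y|/\sqrt 2)+iy\bigr)
\]
on the strip $|y|\le\sqrt 2$, extended by reflection as in the classical case. A one-line Jacobian computation gives $|\det Dg(x+iy)|=\lambda e^{2\lambda x}$. Reproducing the proof of Lemma~\ref{volume} and using that $g$ is two-to-one on each period strip and that $\real z>0$ on its image yields the expansion bound $m(g(V))\ge\tfrac{\lambda}{2}m(V)$, whose iteration forces $m(g^n(V))\to\infty$ precisely when $\lambda>2$. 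The area estimate $A_m=\tfrac{2\sqrt 2}{\lambda}\log\tfrac{m+1}{m}$ from the proof of Theorem~\ref{plane} is unchanged because the preimages of the real axis on $x_1=x_2$ are again the level sets $e^{\lambda x}y=2\sqrt 2 m$, so the contradiction argument goes through verbatim and the planes $x_1=\pm x_2$ together with all their $\mathcal{Z}_{pyr}$-preimages lie in $\mathcal{J}(\mathcal{Z}_{pyr})$.

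The remaining geometric lemmas will simplify rather than become harder. The surfaces $S_n$ of Lemma~\ref{surfacelemma} are explicit graphs $x_3=\log\tfrac{2(n+1)\lambda}{x_1\pm x_2}$ on the appropriate sub-beams of $B_{(0,0)}$, giving $T_n=4\lambda^2\log\tfrac{n+2}{n+1}$ exactly as before and a uniform volume bound between consecutive surfaces. For Lemma~\ref{upup}(a) one has $h_{pyr,3}(x_1,x_2)\ge\lambda-\delta$ on the $\delta$-cylinder around the $x_3$-axis, and minimising $e^t(\lambda-\delta)-t$ yields a positive $p_3$-increment whenever $\lambda-\delta>1/e$, automatic under $\lambda>2$; part (b) follows from $|p(\mathcal{Z}_{pyr}(x))|=e^{x_3}|p(x)|$. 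Lemma~\ref{lipschitz} uses the sector-wise bound $|Dh_{pyr}|\le\sqrt 2$ together with $|h_{pyr}|\le\sqrt 2\,\lambda$. With these pieces in place, the two-case dichotomy ending the proof of Theorem~\ref{main} runs word for word. The \emph{main obstacle} is really the pyramid analogue of Theorem~\ref{plane}: the factor $\tfrac12$ arising from the two-to-one character of $g$ in Lemma~\ref{volume} cannot be removed by this method, which is exactly why $\lambda>2$ is the natural threshold dropping out of the argument.
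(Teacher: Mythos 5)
Your proposal is correct and is essentially the argument the paper intends: the paper omits the proof, asserting only that $\lambda>2$ follows by running the Theorem~\ref{main} machinery with the explicit structure of $h_{pyr}$. The observations you make — that $p\circ h_{pyr}$ is the identity so $\det D\mathcal{Z}_{pyr}=\lambda e^{3x_3}$ exactly and $|p\circ\mathcal{Z}_{pyr}^n|$ obeys a clean recursion, and that $|\det Dg|=\lambda e^{2\lambda\real z}$ exactly on the invariant planes — precisely replace the $L$-dependent bounds of Lemmas~\ref{dete} and \ref{expand}, and correctly trace the threshold $\lambda>2$ to the two-to-one factor in Lemma~\ref{volume}, which is why the generic constant $C_{h_{gen}}$ of Theorem~\ref{pyramid} (roughly $16$ for the pyramid) can be beaten.
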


		We are ready now to discuss the proof of Theorem \ref{pyramid}.
		
		First we have to show that the $x_3$-axis belongs to the Julia set which is proven in exactly the same way as for the spherical Zorich maps (see Proposition \ref{prop}) so we omit the proof. Then we have to study our maps in the planes $x_1=\pm x_2$. Again in those planes our map is conjugate through $\phi(x_1,x_2,x_3)=\frac{1}{\lambda}(x_3+ix_1)$ to the map 
		
		\[\hat{g}(z):= \begin{cases}
			\hat{\psi}\left(\bar{z}+2 i\right),&\hspace{1mm}\imag (z)\in\left[(4k+1),(4k+3)\right]\\\\\hat{\psi}(z)&\hspace{1mm}\imag (z)\in\left[(4k-1),(4k+1)\right]
		\end{cases},\]where $z=x+iy\in\mathbb{C}$, $k\in\mathbb{Z}$ and $\hat{\psi}(x+iy)= e^{\lambda x}\left(\mathfrak{h}_{gen,3}(y,y)+i\mathfrak{h}_{gen,1}(y,y)\right)$.
		We again set $\mathfrak{a}(y)=\mathfrak{h}_{gen,3}(y,y)$ and $\mathfrak{b}(y)=\mathfrak{h}_{gen,1}(y,y)$.
		
		We can then prove  that Theorem \ref{plane} holds in this setting as well. 
		\begin{lemma}
			For $\lambda> \frac{2L^2}{\sin \theta_{_{\mathcal{S}}}\min_{x\in Q}|\mathfrak{h}_{gen}(x)|}$ if $V$ is a connected  set of the complex plane with $m(V)>0$ then $\hat{g}^n(V)$ intersects the real axis for some $n\in \mathbb{N}$.
		\end{lemma}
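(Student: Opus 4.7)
The plan is to closely mirror the three-part structure of the proof of Theorem~\ref{plane}. First, I would prove an analogue of Lemma~\ref{expand}, giving a pointwise almost-everywhere lower bound on $|\det D\hat g(z)|$. Second, this bound would be fed into an analogue of Lemma~\ref{volume}, showing that $m(\hat g^n(V))\to\infty$ whenever no iterate of $V$ meets the real axis. Finally, I would reach a contradiction by considering the pre-images of the horizontal lines $\imag z=2m$, $m\in\mathbb{Z}\setminus\{0\}$, whose level curves partition each period strip into cells of finite, eventually decreasing, area.

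For the Jacobian estimate, differentiating $\hat\psi(x+iy)=e^{\lambda x}(\mathfrak{a}(y)+i\mathfrak{b}(y))$ directly gives
\[
\det D\hat\psi(x+iy)=\lambda e^{2\lambda x}\bigl(\mathfrak{a}(y)\mathfrak{b}'(y)-\mathfrak{b}(y)\mathfrak{a}'(y)\bigr).
\]
Writing $\gamma(y):=\mathfrak{h}_{gen}(y,y)=(\mathfrak{b}(y),\mathfrak{b}(y),\mathfrak{a}(y))$, a short computation shows that $|\gamma(y)\times\gamma'(y)|=\sqrt{2}\,|\mathfrak{a}\mathfrak{b}'-\mathfrak{b}\mathfrak{a}'|$. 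The non-tangential position vector property (applied via the second observation in the remark) then provides $|\gamma\times\gamma'|\geq|\gamma(y)||\gamma'(y)|\sin\theta_\mathcal{S}$ almost everywhere; the bi-Lipschitz condition applied in the direction $(1,1)$ gives $|\gamma'(y)|\geq\sqrt{2}/L$, while by assumption $|\gamma(y)|\geq\min_{x\in Q}|\mathfrak{h}_{gen}(x)|$. Feeding the resulting lower bound on $|\det D\hat g|$ into the two-to-one change-of-variables argument of Lemma~\ref{volume} yields $m(\hat g(V))\geq c\,m(V)$ with $c>1$ under the stated hypothesis on $\lambda$, so iteration forces $m(\hat g^n(V))\to\infty$.

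The last step proceeds as in the proof of Theorem~\ref{plane}: the level curves $\gamma_m$ defined by $e^{\lambda x}\mathfrak{b}(y)=2m$ foliate the strip $S=\{\imag z\in(0,2)\}$ into cells whose areas, after a direct integration, are of order $\frac{1}{\lambda}\log\frac{m+1}{m}$, and hence decrease to zero. The main obstacle is verifying that the initial cell $A_0$ (bounded by the imaginary axis and $\gamma_1$) has finite area; this requires a lower bound of the form $|\mathfrak{b}(y)|\geq c'|y|$ near $y=0$. In the spherical case this followed from the unit-sphere identity $\mathfrak{h}_1^2+\mathfrak{h}_2^2+\mathfrak{h}_3^2=1$ together with the $\pi/4$ angle at the pole. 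In the generalized setting one combines the bi-Lipschitz estimate $|\mathfrak{h}_{gen}(y,y)-(0,0,1)|\geq\sqrt{2}|y|/L$ with the non-tangential position vector property at the pole $(0,0,1)$: the property guarantees that a definite fraction of this displacement lies in the horizontal plane, giving $|\mathfrak{b}(y)|\geq c'|y|$ for a constant $c'$ depending on $L$ and $\theta_\mathcal{S}$. The local integrability of $\log|y|$ near $0$ then makes $A_0$ finite, so $m(\hat g^n(V))$ would be trapped in a finite union of cells of bounded total area, contradicting its growth to infinity.
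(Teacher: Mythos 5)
Your proposal follows the same three-part structure as the paper (a pointwise Jacobian estimate, a two-to-one volume-growth argument, and a cell decomposition of the strip by preimages of the horizontal lines) and it is correct. The one place you do something genuinely different from the paper --- and more carefully --- is the Jacobian estimate: you work directly with the three-dimensional curve $\gamma(y)=\mathfrak{h}_{gen}(y,y)=(\mathfrak{b},\mathfrak{b},\mathfrak{a})$, compute $|\gamma\times\gamma'|=\sqrt{2}\,|\mathfrak{a}\mathfrak{b}'-\mathfrak{b}\mathfrak{a}'|$, and then invoke the non-tangential position vector property for the angle between $\gamma$ and $\gamma'$ in $\mathbb{R}^3$, which is precisely the angle that property controls. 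The paper instead writes $|\mathfrak{a}\mathfrak{b}'-\mathfrak{b}\mathfrak{a}'|=|(\mathfrak{a},\mathfrak{b})|\,|(\mathfrak{a}',\mathfrak{b}')|\,|\sin\theta(y)|$ with $\theta(y)$ the \emph{planar} angle between $(\mathfrak{a},\mathfrak{b})$ and $(\mathfrak{a}',\mathfrak{b}')$, and then cites the non-tangential property as though it bounded $\theta(y)$; since the projection $(\mathfrak{b},\mathfrak{b},\mathfrak{a})\mapsto(\mathfrak{a},\mathfrak{b})$ is not angle-preserving, that step really needs the cross-product identity you supplied. Your resulting lower bound $|\mathfrak{a}\mathfrak{b}'-\mathfrak{b}\mathfrak{a}'|\geq \sin\theta_{\mathcal{S}}\min_{x\in Q}|\mathfrak{h}_{gen}(x)|/L$ is a factor of two sharper than the paper's and still gives expansion under the stated hypothesis on $\lambda$. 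Finally, your argument for the finiteness of the innermost cell $A_0$, combining the bi-Lipschitz estimate for $|\mathfrak{h}_{gen}(y,y)-(0,0,1)|$ with the non-tangential property at the pole to get $|\mathfrak{b}(y)|\geq c'|y|$, is the right replacement for the unit-sphere identity used in Theorem \ref{plane}; the paper omits this step, saying only that the rest of the proof is very similar, so it is worth having spelled out.
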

		This of course implies that the planes $x_1=\pm x_2$ and all their parallel translate planes $x_1=\pm x_2+ 2\lambda k$, $k\in\mathbb{Z}$ are in $\mathcal{J}(\mathcal{Z}_{gen})$. Again all those planes partition $\mathbb{R}^3$ in square beams whose boundaries are in the Julia set and in which our Zorich map is a homemorphism.
		
		We will not give the proof of the above lemma here since it is very similar with the proof of Theorem \ref{plane}. The only significant difference in the proof of the above lemma in this more general setting is in the corresponding Lemma \ref{expand} which we prove below.

		\begin{lemma}
			\[|\det (D\hat{g}(z))|\geq \frac{\sin \theta_{_{\mathcal{S}}}\min_{x\in Q} |\mathfrak{h}_{gen}(x)|\lambda e^{2\lambda\real(z)}}{2L}\hspace{1mm}\text{a.e.}\]
		\end{lemma}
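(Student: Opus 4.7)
The plan is to mimic the structure of Lemma \ref{expand}, reducing to a direct computation of $|\det D\hat{\psi}|$ and then controlling the minor that arises, but where orthogonality of $(a,b)$ and $(a',b')$ came for free from $a^{2}+b^{2}=1$ in the spherical case, we instead extract the needed lower bound from the non-tangential position vector property. First I would handle, exactly as in Lemma \ref{expand}, the strip $\imag z\in[(4k+1),(4k+3)]$ by invoking the chain rule applied to the reflection $T(z)=\bar z+2i$, whose differential has determinant $-1$, so that it suffices to prove the estimate for $\imag z\in[(4k-1),(4k+1)]$, where $\hat g=\hat\psi$.

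On that strip the Jacobian matrix of $\hat\psi$ at $z=x+iy$ is
\[
\begin{pmatrix}\lambda e^{\lambda x}\mathfrak a(y) & e^{\lambda x}\mathfrak a'(y)\\ \lambda e^{\lambda x}\mathfrak b(y) & e^{\lambda x}\mathfrak b'(y)\end{pmatrix},
\]
giving $|\det D\hat g(z)|=\lambda e^{2\lambda x}\,|\mathfrak a(y)\mathfrak b'(y)-\mathfrak b(y)\mathfrak a'(y)|$. The job is therefore to bound $|\mathfrak a\mathfrak b'-\mathfrak b\mathfrak a'|$ below a.e. Because $\mathfrak{h}_{gen,1}=\mathfrak{h}_{gen,2}$ on the diagonal, the curve $\gamma(y):=\mathfrak h_{gen}(y,y)=(\mathfrak b(y),\mathfrak b(y),\mathfrak a(y))$ lies in $\mathcal S$, its velocity is $\gamma'(y)=(\mathfrak b'(y),\mathfrak b'(y),\mathfrak a'(y))$, and a direct computation of the $3$D cross product gives
\[
\gamma(y)\times\gamma'(y)=\bigl(\mathfrak b\mathfrak a'-\mathfrak a\mathfrak b'\bigr)(1,-1,0),
\]
so $|\gamma\times\gamma'|=\sqrt 2\,|\mathfrak a\mathfrak b'-\mathfrak b\mathfrak a'|=|\gamma||\gamma'|\sin\alpha(y)$, where $\alpha(y)$ is the angle between the position vector and the tangent to the curve.

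The three factors are now controlled separately. The norm $|\gamma(y)|=|\mathfrak h_{gen}(y,y)|$ is bounded below by $\min_{x\in Q}|\mathfrak h_{gen}(x)|$ by hypothesis (5). The angle $\sin\alpha(y)\geq\sin\theta_{_{\mathcal S}}$ almost everywhere, by the second consequence of the non-tangential position vector property quoted in the remark following (\ref{itemangle}), applied to the linear parametrization $t\mapsto(t,t)$. Finally, $|\gamma'(y)|\geq\sqrt 2/L$ a.e.\ from the bi-Lipschitz lower bound on $\mathfrak h_{gen}$ evaluated along the diagonal direction $(1,1)$. Combining these three inequalities yields
\[
|\mathfrak a\mathfrak b'-\mathfrak b\mathfrak a'|\ \geq\ \frac{1}{\sqrt 2}\cdot\min_{x\in Q}|\mathfrak h_{gen}(x)|\cdot\frac{\sqrt 2}{L}\cdot\sin\theta_{_{\mathcal S}}\ =\ \frac{\sin\theta_{_{\mathcal S}}\min_{x\in Q}|\mathfrak h_{gen}(x)|}{L},
\]
which, after multiplying by $\lambda e^{2\lambda\real z}$, gives an estimate at least as strong as the claimed one (the factor $2$ in the denominator provides some slack).

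The only genuine obstacle is the angle bound, since in the original lemma this came essentially for free from the fact that the image lies on a sphere and so $(a,b)\perp(a',b')$ automatically; here it must come from the geometric hypothesis on $\mathcal S$, and a small subtlety is that one must verify that the tangent line to $\gamma$ inherits the non-tangentiality from the ambient condition on $\mathcal S$. Once the remark preceding the lemma is invoked, this step is immediate, and the rest of the computation is as routine as in Lemma \ref{expand}.
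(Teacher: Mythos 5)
Your proof is correct and in fact takes a cleaner route than the paper. The paper works directly with the projected $2$-vectors $(\mathfrak a,\mathfrak b)$ and $(\mathfrak a',\mathfrak b')$ in the plane $x_1=x_2$, writes the determinant as $|(\mathfrak a,\mathfrak b)|\,|(\mathfrak a',\mathfrak b')|\,|\sin\theta(y)|$ for the angle $\theta(y)$ between those $2$-vectors, and then invokes the non-tangential position vector property to assert $|\sin\theta(y)|\geq\sin\theta_{\mathcal S}$; it then uses the estimates $|(\mathfrak a',\mathfrak b')|\geq 1/(\sqrt2 L)$ and $|(\mathfrak a,\mathfrak b)|\geq\min_Q|\mathfrak h_{gen}|/\sqrt2$ to arrive at the $1/(2L)$ factor. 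The delicate point the paper glosses over is that the identification $(t,t,s)\mapsto(s,t)$ of the diagonal plane with $\mathbb C$ is linear but not conformal, so the $2$D angle $\theta(y)$ need not coincide with the $3$D angle $\alpha(y)$ that the geometric hypothesis on $\mathcal S$ actually controls; making this rigorous requires an extra step. Your cross-product identity $\gamma\times\gamma'=(\mathfrak b\mathfrak a'-\mathfrak a\mathfrak b')(1,-1,0)$ bypasses this entirely: it ties $|\mathfrak a\mathfrak b'-\mathfrak b\mathfrak a'|$ directly to the genuine $3$D quantities $|\gamma|$, $|\gamma'|$, $\sin\alpha(y)$, each of which is controlled by hypothesis (5), bi-Lipschitzness along $(1,1)$, and the second observation in the remark after property \eqref{itemangle} respectively. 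The $\sqrt2$ factors cancel cleanly, and you end up with $\lambda e^{2\lambda\real z}\min_Q|\mathfrak h_{gen}|\sin\theta_{\mathcal S}/L$, which is a factor of $2$ sharper than the stated bound. In short: same skeleton as the paper, but the $3$D cross-product identity makes the geometric input airtight where the paper's $2$D angle step is left implicit, and it buys a better constant for free.
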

		\begin{proof}
			The only difference with the proof of Lemma \ref{expand} is in finding a lower bound for \[\left|\det\begin{pmatrix} \mathfrak{a}(y)& \frac{d\mathfrak{a}}{dy}({y})\\\\
				\mathfrak{b}(y)& \frac{d\mathfrak{b}}{dy}(y)\end{pmatrix}\right|.\]
			
			This time we know that the absolute value of the determinant equals \[\left|\left(\mathfrak{a}(y),\mathfrak{b}(y)\right)\right|\left|\left(\frac{d\mathfrak{a}}{dy}({y}),\frac{d\mathfrak{b}}{dy}({y})\right)\right||\sin \theta(y)|,\]
			
			where $\theta(y)$ is the angle between the vectors $(\mathfrak{a}(y),\mathfrak{b}(y))$ and $(\frac{d\mathfrak{a}}{dy}({y}),\frac{d\mathfrak{b}}{dy}(y))$.

			Hence using the non-tangential position vector property and the fact that $$\left|\left(\frac{d\mathfrak{a}}{dy}({y}),\frac{d\mathfrak{b}}{dy}({y})\right)\right|\geq \frac{1}{\sqrt2 L}\hspace{2mm}\text{and}\hspace{2mm}\left|\left(\mathfrak{a}(y),\mathfrak{b}(y)\right)\right|\geq \frac{\min_{x\in Q} |\mathfrak{h}_{gen}(x)|}{\sqrt2}$$ we get that 
			
			\[\left|\det\begin{pmatrix} \mathfrak{a}(y)& \frac{d\mathfrak{a}}{dy}({y})\\\\
				\mathfrak{b}(y)& \frac{d\mathfrak{b}}{dy}(y)\end{pmatrix}\right|\geq \frac{\sin \theta_{_{\mathcal{S}}}\min_{x\in Q} |\mathfrak{h}_{gen}(x)|}{2L}\]
			and thus
			\[|\det D\hat{g}(z)|\geq\frac{ \sin \theta_{_\mathcal{S}}\min_{x\in Q} |\mathfrak{h}_{gen}(x)|\lambda e^{2\lambda \real{z}}}{2 L}\hspace{2mm}\text{a.e.}\]
		\end{proof}

		Next we need the Misiurewicz type Lemma \ref{dete} which in this case becomes 
		\begin{lemma}\label{detev2}
			\[\det\left(D\mathcal{Z}^n_{gen}(x)\right)\geq \left(\frac{\lambda\min_{x\in Q}|\mathfrak{h}_{gen}(x)|\sin \theta_{_{\mathcal{S}}}}{L^5}\right)^n\frac{1}{\lambda^3}\left|\left( p \circ \mathcal{Z}_{gen}^{n}\right)(x)\right|^3\hspace{2mm}\text{a.e.}\]
		\end{lemma}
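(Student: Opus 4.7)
The plan is to follow the structure of the proof of Lemma~\ref{dete} line by line, with the bi-Lipschitz constant $L$ playing the same role and with the non-tangential position vector property replacing the orthogonality of position vector and tangent plane that was automatic on the sphere. Note that in this setting there is no parameter $\nu$, and the normalisation \eqref{eqgen} gives $h_{gen}(0,0)=(0,0,\lambda)$ together with $|h_{gen}(x_1,x_2)|\leq \lambda$ on $\lambda Q$.

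First I would establish the iterated Lipschitz bound
\[
|p(\mathcal{Z}_{gen}^n(x))|\leq \lambda L^{n}\prod_{k=0}^{n-1}e^{p_3(\mathcal{Z}_{gen}^k(x))}
\]
by iterating the inequality $|p(h_{gen}(y))|=|p(h_{gen}(y)-h_{gen}(0))|\leq|h_{gen}(y)-h_{gen}(0)|\leq L|y|$ and using $|h_{gen}|\leq \lambda$ at the base step, exactly as in \eqref{eq10}. Next, writing $A=\partial_1 h_{gen}$, $B=\partial_2 h_{gen}$, $C=h_{gen}\circ p$ at the relevant point, the definition of $\mathcal{Z}_{gen}$ yields $\det D\mathcal{Z}_{gen}(x)=e^{3x_3}\det H_{gen}$ with $\det H_{gen}=\langle A\times B,C\rangle$.

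The only genuinely new ingredient is the lower bound on this triple product. In Lemma~\ref{dete} the sphere condition forced $C$ to be parallel to $A\times B$; here instead I would invoke item \eqref{itemangle} of the definition of $\mathcal{S}$, which (as noted in the remark following that definition) guarantees that at almost every point of the surface the angle between $C$ and the tangent plane $\mathrm{span}(A,B)$ is at least $\theta_{_{\mathcal{S}}}$. Equivalently the angle between $C$ and the normal direction $A\times B$ is at most $\pi/2-\theta_{_{\mathcal{S}}}$, so
\[
|\det H_{gen}|=|A\times B|\,|C|\left|\Big\langle\tfrac{A\times B}{|A\times B|},\tfrac{C}{|C|}\Big\rangle\right|\geq |A\times B|\,|C|\,\sin\theta_{_{\mathcal{S}}}.
\]
The bi-Lipschitz argument from Lemma~\ref{dete} still gives $|A\times B|\geq 1/L^2$ almost everywhere, while $|C|=|h_{gen}(p(x))|\geq \lambda\min_{x\in Q}|\mathfrak{h}_{gen}(x)|$ by the rescaling. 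Combining these bounds,
\[
\det D\mathcal{Z}_{gen}(x)\geq e^{3x_3}\,\frac{\lambda\min_{x\in Q}|\mathfrak{h}_{gen}(x)|\,\sin\theta_{_{\mathcal{S}}}}{L^{2}}\qquad\text{a.e.}
\]

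Finally I would feed this one-step estimate into the chain rule identity \eqref{chainrule}, obtaining a product of factors $e^{3p_3(\mathcal{Z}_{gen}^k(x))}\cdot\lambda\min|\mathfrak{h}_{gen}|\sin\theta_{_{\mathcal{S}}}/L^2$, and then compare with the cube of the iterated upper bound for $|p(\mathcal{Z}_{gen}^n(x))|$. The $e^{p_3(\cdot)}$-factors match up and the remaining $L^{3n}$ from the Lipschitz iteration combines with $L^{2n}$ from the Jacobian denominator to produce $L^{5n}$, giving the claimed inequality after rearrangement and using $\lambda^3$ coming from the base term. The main obstacle is the step where orthogonality fails and one must identify the non-tangential position vector property as precisely the correct angular substitute; once this geometric observation is made, the rest of the proof is routine bookkeeping parallel to Lemma~\ref{dete}.
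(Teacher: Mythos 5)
Your proposal matches the paper's proof essentially step for step: the same decomposition $\det H_{gen}=\langle A\times B,C\rangle$, the same bounds $|A\times B|\geq 1/L^2$ and $|C|\geq\lambda\min_{x\in Q}|\mathfrak{h}_{gen}(x)|$, and the same use of the non-tangential position vector property to lower-bound $|\cos\phi|$ by $\sin\theta_{_{\mathcal{S}}}$, after which everything reduces to the bookkeeping of Lemma~\ref{dete}. The only cosmetic difference is that you spell out the plane-angle versus normal-angle complementarity explicitly, which the paper leaves implicit.
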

		\begin{proof}
			Again the only difference is in obtaining a lower bound for the determinant $\det D\mathcal{Z}_{gen}(x)$. Define $$\mathcal{H}=\begin{pmatrix}
				\frac{\partial h_{gen,1}}{\partial x_1}(p(x))&\frac{\partial h_{gen,1}}{\partial x_2}(p(x))&h_{gen,1}(p(x))\\\\\frac{\partial h_{gen,2}}{\partial x_1}(p(x))&\frac{\partial h_{gen,2}}{\partial x_2}(p(x))&h_{gen,2}(p(x))\\\\\frac{\partial h_{gen,3}}{\partial x_1}(p(x))&\frac{\partial h_{gen,3}}{\partial x_2}(p(x))&h_{gen,3}(p(x))
			\end{pmatrix}$$

			and set \[\mathcal{A}=\begin{pmatrix}
				\frac{\partial h_{gen,1}}{\partial x_1}\left(p(x)\right)\\\\\frac{\partial h_{gen,2}}{\partial x_1}\left(p(x)\right)\\\\\frac{\partial h_{gen,3}}{\partial x_1}\left(p(x)\right)
			\end{pmatrix},\hspace{2mm} \mathcal{B}=\begin{pmatrix}
				\frac{\partial h_{gen,1}}{\partial x_2}\left(p(x)\right)\\\\\frac{\partial h_{gen,2}}{\partial x_2}\left(p(x)\right)\\\\\frac{\partial h_{gen,3}}{\partial x_2}\left(p(x)\right)\end{pmatrix},\hspace{2mm} \mathcal{C}=\begin{pmatrix}
				h_{gen,1}\left(p(x)\right)\\\\h_{gen,2}\left(p(x)\right)\\\\h_{gen,3}\left(p(x)\right)
			\end{pmatrix}.
			\]
			
			Then $\det\mathcal{H}=\big\langle \mathcal{A}\times \mathcal{B}, \mathcal{C}\big\rangle=|\mathcal{A}\times\mathcal{B}||\mathcal{C}| \cos \phi,$ where $\phi$ is the angle between $\mathcal{A}\times \mathcal{B}$ and $\mathcal{C}$. Using  now the fact that $|\mathcal{C}|\geq\lambda \min_{x\in Q}|\mathfrak{h}_{gen}(x)|$ and  $|\mathcal{A}\times\mathcal{B}|\geq \frac{1}{L^2}$ together with  the non-tangential position vector property we can show that \[\det\mathcal{H}\geq \frac{\lambda \min_{x\in Q}|\mathfrak{h}_{gen}(x)|\sin \theta_{_\mathcal{S}} }{L^2}.\]
			
			Hence \[\det D\mathcal{Z}_{gen}(x)\geq e^{3x_3}\frac{\lambda \min_{x\in Q}|\mathfrak{h}_{gen}(x)|\sin \theta_{_\mathcal{S}}}{L^2}\]
			and the rest follows in exactly the same way as in the proof of Lemma \ref{dete}.
		\end{proof}
		Versions of Lemmas \ref{upup}, \ref{metro}, \ref{surfacelemma}, \ref{axis}, \ref{newlemma}, \ref{lipschitz} now follow with only slight modifications on their proofs. Hence, the proof of Theorem \ref{pyramid} now follows with the same arguments as the proof of Theorem \ref{main}. Let us briefly sketch how all this should work.
		\begin{lemma}\label{upupv2}
			\begin{enumerate}
				
				\item[(a)] There are $\delta>0 $ and  $c>0$ such that for all $x\in C_{\delta}$, where $C_{\delta}$ is the cylinder around $x_3$-axis with $\delta$ radius, we have that $p_3(\mathcal{Z}_{gen}(x))>p_3(x)+c$.
				\item[(b)] For  $\delta$ as in (a) and for every $x\in C_{\delta}$, with $p(x)\not=(0,0)$, there is an $n\in\mathbb{N}$ such that $\mathcal{Z}_{gen}^n(x)\not\in C_{\delta}$.
			\end{enumerate}
			
		\end{lemma}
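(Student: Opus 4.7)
The plan is to mirror the proof of Lemma~\ref{upup} almost verbatim, replacing the two places where the spherical geometry was used by appeals to the non-tangential position vector property of $\mathcal{S}$.

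For part (a), I would observe that $p_3(\mathcal{Z}_{gen}(x))=e^{x_3}h_{gen,3}(x_1,x_2)$ and recall that $h_{gen}(0,0)=(0,0,\lambda)$, so in particular $h_{gen,3}(0,0)=\lambda$. Continuity of $h_{gen,3}$ lets me pick, for any prescribed $\varepsilon\in(0,\lambda)$, a disk $D(0,\delta)$ on which $h_{gen,3}>\lambda-\varepsilon$. For $x\in C_\delta$ this gives $p_3(\mathcal{Z}_{gen}(x))\geq(\lambda-\varepsilon)e^{p_3(x)}$, and minimising $(\lambda-\varepsilon)e^t-t$ in $t$ yields $p_3(\mathcal{Z}_{gen}(x))\geq p_3(x)+1+\log(\lambda-\varepsilon)$. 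Provided $\lambda>1/e$ --- which is ensured by the standing assumption $\lambda>C_{h_{gen}}$, as $C_{h_{gen}}$ will be chosen large --- I may shrink $\varepsilon$ so that $1+\log(\lambda-\varepsilon)>0$ and take $c:=1+\log(\lambda-\varepsilon)$. This is essentially identical to the argument in Lemma~\ref{upup}(a) with $\nu$ absorbed into the map.

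For part (b), I would argue by contradiction as before: assume some $x\in C_\delta$ with $p(x)\neq(0,0)$ has $\mathcal{Z}_{gen}^n(x)\in C_\delta$ for all $n$. Part (a) then forces $p_3(\mathcal{Z}_{gen}^n(x))\to\infty$. Writing $y=p(\mathcal{Z}_{gen}^n(x))$, the identity
\[|p(h_{gen}(y))|=|p(h_{gen}(y)-h_{gen}(0))|=\sin\theta\,|h_{gen}(y)-h_{gen}(0)|\]
holds, where $\theta$ is the angle between $h_{gen}(y)-h_{gen}(0)$ and the $x_3$-axis. The only nontrivial point is to show that, after possibly shrinking $\delta$, there is a constant $s_0>0$ with $\sin\theta\geq s_0$ uniformly on $D(0,\delta)$; in the spherical case this was free with $s_0=1/\sqrt 2$. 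Here I would invoke the non-tangential position vector property applied to $w=h_{gen}(0)$ and $z=h_{gen}(y)$: the angle between the ray $\overrightarrow{0\,h_{gen}(y)}$ and the segment $\overrightarrow{h_{gen}(0)\,h_{gen}(y)}$ is at least $\theta_{_{\mathcal{S}}}$ provided $|h_{gen}(y)-h_{gen}(0)|\leq\lambda\varepsilon$, which by the Lipschitz property holds once $\delta$ is chosen small. As $\delta\to 0$, the ray from the origin to $h_{gen}(y)$ tends to the $x_3$-axis direction (because $h_{gen}(0)$ lies on the axis), so the angle between $\overrightarrow{h_{gen}(0)\,h_{gen}(y)}$ and the $x_3$-axis differs from $\theta_{_{\mathcal{S}}}$ by a quantity tending to $0$. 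Thus for sufficiently small $\delta$ I obtain $\theta\geq\theta_{_{\mathcal{S}}}/2$ and may set $s_0:=\sin(\theta_{_{\mathcal{S}}}/2)$.

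Combining $|p(h_{gen}(y))|\geq s_0\,|h_{gen}(y)-h_{gen}(0)|$ with the bi-Lipschitz lower bound $|h_{gen}(y)-h_{gen}(0)|\geq|y|/L$ yields
\[|p(\mathcal{Z}_{gen}^{n+1}(x))|\geq\frac{s_0}{L}\,e^{p_3(\mathcal{Z}_{gen}^n(x))}\,|p(\mathcal{Z}_{gen}^n(x))|,\]
exactly paralleling the bound $\geq\frac{1}{\sqrt 2 L}e^{p_3(\mathcal{Z}_{\nu}^n(x))}|p(\mathcal{Z}_{\nu}^n(x))|$ from Lemma~\ref{upup}(b). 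Since $p_3(\mathcal{Z}_{gen}^n(x))\to\infty$, the prefactor eventually exceeds $2$, forcing $|p(\mathcal{Z}_{gen}^n(x))|$ to grow at least geometrically and contradicting $\mathcal{Z}_{gen}^n(x)\in C_\delta$ for all $n$. The only genuine new work relative to the spherical case is isolating the uniform lower bound $s_0$ for $\sin\theta$; this is the main (and, happily, mild) obstacle, and is precisely what the non-tangential position vector property was designed to supply.
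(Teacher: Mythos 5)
Your proof is correct and follows the paper's approach of adapting Lemma~\ref{upup} to the general surface. The paper's own proof is terse and merely asserts that the lower bound $\pi/4$ for $\theta$ is replaced by ``some constant larger than $0$''; your limiting argument --- using the non-tangential position vector property together with the observation that, as $h_{gen}(y)\to h_{gen}(0)$ on the $x_3$-axis, the acute angle at $h_{gen}(y)$ between the lines through $0$ and through $h_{gen}(0)$ converges to the angle $\theta$ between the chord and the axis --- supplies exactly the justification the paper leaves implicit, and gives the explicit constant $s_0=\sin(\theta_{_{\mathcal{S}}}/2)$ after shrinking $\delta$.
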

		\begin{proof}
			The proof of (a) goes word for word as Lemma \ref{upup}. For (b) again the proof is almost the same. The difference here is the lower bound for the angle $\theta$ used in the proof of Lemma \ref{upup} where instead of $\frac{\pi}{4}$ is now some constant larger than $0$.
		\end{proof}
		\begin{lemma}\label{metrov2}
			Assume $\lambda>\frac{L^5}{\min_{x\in Q}|\mathfrak{h}_{gen}(x)|\sin \theta_{_{\mathcal{S}}}}$.	Let $V\subset\mathbb{R}^3$ be a connected set with $m(V)>0$ and whose iterates do  not intersect any of the planes $x_1=\pm x_2+2k\lambda$, $k\in\mathbb{Z}$ . Suppose also that there is a sequence  of integers $n_j>0$ with $\mathcal{Z}_{gen}^{n_j}(V)\cap C_a=\emptyset$, where $C_a$ is a cylinder around $x_3$-axis of any radius $a>0$. Then $m(\mathcal{Z}_{gen}^{n_j}(V))\to\infty$ as $n_j\to \infty$, where $m$ is the 3-dimensional Lebesgue measure.
		\end{lemma}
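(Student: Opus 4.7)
The plan is to mirror the proof of Lemma \ref{metro} almost verbatim, with Lemma \ref{detev2} taking the role of Lemma \ref{dete}. First, since by hypothesis no iterate of $V$ meets any of the planes $x_1 = \pm x_2 + 2k\lambda$, and $\mathcal{Z}_{gen}$ is a homeomorphism on each square beam cut out by those planes, the composition $\mathcal{Z}_{gen}^{n_j}$ is a homeomorphism on $V$. This justifies applying the change-of-variables formula
\[
m\bigl(\mathcal{Z}_{gen}^{n_j}(V)\bigr) = \int_V \bigl|\det D\mathcal{Z}_{gen}^{n_j}\bigr|\, dm.
\]

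Next, the assumption $\mathcal{Z}_{gen}^{n_j}(V)\cap C_a=\emptyset$ gives $|p(\mathcal{Z}_{gen}^{n_j}(x))|>a$ for every $x\in V$, so Lemma \ref{detev2} yields, almost everywhere on $V$,
\[
\det\bigl(D\mathcal{Z}_{gen}^{n_j}(x)\bigr) \;\geq\; \left(\frac{\lambda\,\min_{x\in Q}|\mathfrak{h}_{gen}(x)|\,\sin\theta_{_{\mathcal{S}}}}{L^5}\right)^{n_j}\frac{a^3}{\lambda^3}.
\]
Integrating this lower bound over $V$ gives
\[
m\bigl(\mathcal{Z}_{gen}^{n_j}(V)\bigr) \;\geq\; \left(\frac{\lambda\,\min_{x\in Q}|\mathfrak{h}_{gen}(x)|\,\sin\theta_{_{\mathcal{S}}}}{L^5}\right)^{n_j}\frac{a^3}{\lambda^3}\,m(V).
\]

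The hypothesis $\lambda > L^5 / (\min_{x\in Q}|\mathfrak{h}_{gen}(x)|\sin\theta_{_{\mathcal{S}}})$ is exactly the condition that makes the base of the exponential strictly greater than $1$, so letting $n_j\to\infty$ forces $m(\mathcal{Z}_{gen}^{n_j}(V))\to\infty$. There is no genuine obstacle here; the whole statement is engineered so that Lemma \ref{detev2} slots into the argument of Lemma \ref{metro} with only the constant and the threshold on $\lambda$ replaced. The only point worth checking carefully is that the exclusion from the plane system truly implies that $\mathcal{Z}_{gen}^{n_j}\!\restriction V$ is injective, which follows because the connected set $V$ and each of its iterates stay inside a single open square beam in which $\mathcal{Z}_{gen}$ is a homeomorphism onto its image.
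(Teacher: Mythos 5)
Your proof is correct and matches the paper's approach exactly: the paper itself states that Lemma \ref{metrov2} follows from the proof of Lemma \ref{metro} with Lemma \ref{detev2} substituted for Lemma \ref{dete}, which is precisely what you carried out.
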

		\begin{proof}
			The proof is the same as in Lemma \ref{metro} only now we use Lemma \ref{detev2} in place of Lemma~\ref{dete}.
		\end{proof}
		In the same way, as for the Zorich map defined using spheres, we can define the surfaces $S_n$ and $K_n$ lying inside the rectangle beams $B_{(0,0)}$ and $B_{(0,-1)}$ respectively. Again those surfaces, together with the plane $x_3=0$ and the boundaries of the beams,  define sets of finite volume. 
		
		The next three lemmas are the corresponding ones to Lemmas \ref{surfacelemma}, \ref{axis}, \ref{newlemma} respectively. Their proofs almost go word for word with the proofs of the lemmas we just mentioned and are therefore omitted.
		\begin{lemma}\label{surfacelemmav2}
			Let $I_n$ be the volume that the surface $S_n$ encloses together with the plane  $x_3=0$ and inside the beam $B_{(0,0)}$. Then $I_n$ is finite for all $n\in\mathbb{N}$. Furthermore, if $T_n:=I_{n+1}-I_n$ is the volume between $S_n$ and $S_{n+1}$ then $T_n$ is a decreasing sequence.  
		\end{lemma}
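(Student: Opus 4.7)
The plan is to mirror the proof of Lemma \ref{surfacelemma}, replacing the spherical identities used there with bounds derived from the non-tangential position vector property of $\mathcal{S}$. As in the spherical case, I would first partition the beam $B_{(0,0)}$ into three sub-beams $Q_1\times\mathbb{R}$, $Q_2\times\mathbb{R}$, $Q_3\times\mathbb{R}$, where $Q_1,Q_2,Q_3$ are the $h_{gen}^{-1}$-images of the three sectors of $\lambda\mathcal{S}$ cut out by the planes $x_2=x_1$, $x_2=0$ and $x_1=0$. On each sub-beam, the equation describing $S_n$ takes the form $e^{x_3}(h_{gen,1}\pm h_{gen,2})=\pm 2(n+1)\lambda$, and solving for $x_3$ writes
\[ I_n \;=\; \sum_{j=1}^{3}\;\int\!\!\int_{Q_j}\log\frac{2(n+1)\lambda}{|h_{gen,1}\pm h_{gen,2}|}\,dx_1\,dx_2 \]
when $S_n$ lies above $\{x_3=0\}$, with strictly smaller integrals in the case where $S_n$ meets this plane.

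The heart of the argument is the finiteness of each integral, since the integrand has logarithmic singularities at the zeros of $h_{gen,1}\pm h_{gen,2}$, namely $(0,0)$, $(2\lambda,0)$ and $(0,-2\lambda)$. Around $(0,0)$, say inside $Q_2$ where $h_{gen,1}\geq 0\geq h_{gen,2}$, we have $(h_{gen,1}-h_{gen,2})^2\geq h_{gen,1}^2+h_{gen,2}^2$, so it suffices to show $h_{gen,1}^2+h_{gen,2}^2\geq c(x_1^2+x_2^2)$ for a constant $c>0$ in a neighbourhood of the origin. The bi-Lipschitz property of $h_{gen}$ supplies $h_{gen,1}^2+h_{gen,2}^2+(h_{gen,3}-\lambda)^2\geq |(x_1,x_2)|^2/L^2$, so the remaining task is to dominate $(h_{gen,3}-\lambda)^2$ by a constant multiple of $h_{gen,1}^2+h_{gen,2}^2$ locally. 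This is where the non-tangential position vector property enters: the position vector at the apex $(0,0,\lambda)$ is vertical, and the property forces the tangent plane to $\lambda\mathcal{S}$ to make angle at least $\theta_{_\mathcal{S}}$ with the vertical near the apex, which via integration along the straight segment from $(0,0)$ to $(x_1,x_2)$ yields $|h_{gen,3}-\lambda|\leq C_\mathcal{S}\sqrt{h_{gen,1}^2+h_{gen,2}^2}$ with $C_\mathcal{S}$ depending only on $\theta_{_\mathcal{S}}$ and $L$. The singularities at $(2\lambda,0)$ and $(0,-2\lambda)$ are handled identically, since these are the apexes of the reflected copies of $\mathcal{S}$ used in extending $h_{gen}$.

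The decreasing property of $T_n$ then comes from the fact that the $n$-dependent parts of $I_{n+1}$ and $I_n$ differ only by $\log\frac{n+2}{n+1}$, giving
\[ T_n \;=\; I_{n+1}-I_n \;=\; \bigl(|Q_1|+|Q_2|+|Q_3|\bigr)\log\frac{n+2}{n+1}, \]
which is manifestly decreasing in $n$.

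The main obstacle is the second step: extracting the local bound $|h_{gen,3}-\lambda|\leq C_\mathcal{S}\sqrt{h_{gen,1}^2+h_{gen,2}^2}$ from the non-tangential position vector property in a way that is uniform across surfaces (so that all constants depend only on $L$ and $\theta_{_\mathcal{S}}$). Once that geometric estimate is in hand, the remaining computations are direct transcriptions of the ones carried out in Lemma \ref{surfacelemma}.
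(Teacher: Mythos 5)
Your overall strategy is right and matches what the paper intends (the paper simply states that the proofs go ``almost word for word'' with the spherical case), but the step you flag as the main obstacle is actually not one. You do not need the tangent-plane reformulation plus integration along a segment; the chord form of the non-tangential position vector property applies directly at the apex. Since the paper assumes $\mathfrak{h}_{gen}(0,0)=(0,0,1)$, the line joining the origin to the apex $z=(0,0,1)$ \emph{is} the $x_3$-axis, so taking $w=\mathfrak{h}_{gen}(x)$ with $|x|$ small (hence $|w-z|\le\varepsilon$ by the Lipschitz bound), the definition of the non-tangential position vector property gives at once that the chord from $(0,0,\lambda)$ to $h_{gen}(x)$ makes angle $\theta\geq\theta_{_\mathcal{S}}$ with the $x_3$-axis. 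Then, exactly as in the sphere proof,
\begin{equation*}
h_{gen,1}^2+h_{gen,2}^2=\sin^2\theta\,\bigl|h_{gen}(x)-h_{gen}(0,0)\bigr|^2\geq \frac{\sin^2\theta_{_\mathcal{S}}}{L^2}\,(x_1^2+x_2^2),
\end{equation*}
since the displayed identity is elementary vector geometry and does not use the sphere at all, and the second inequality is bi-Lipschitz. This is the same one-line estimate as in Lemma \ref{surfacelemma}, with $\sin^2\theta_{_\mathcal{S}}$ replacing the sphere's $\sin^2(\pi/4)$. Equivalently, your target inequality $|h_{gen,3}-\lambda|\leq\cot\theta_{_\mathcal{S}}\sqrt{h_{gen,1}^2+h_{gen,2}^2}$ drops out of the chord angle bound with $C_{\mathcal{S}}=\cot\theta_{_\mathcal{S}}$, no integration required.

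Your proposed route through tangent planes is also logically backwards: the paper's remark derives the tangent-plane statement \emph{from} the chord statement, and moreover the tangent-plane angle is measured against the local position vector, which rotates as you move along the curve, so the integration you sketch would need a further argument to compare against a fixed vertical direction. You should simply cut that paragraph. Your treatment of the singularities at $(2\lambda,0)$ and $(0,-2\lambda)$ as reflected apexes is correct, and the computation $T_n=(|Q_1|+|Q_2|+|Q_3|)\log\frac{n+2}{n+1}$ is fine and indeed still evaluates to $4\lambda^2\log\frac{n+2}{n+1}$ since the three $Q_j$ tile the cross-section of $B_{(0,0)}$.
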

		
		\begin{lemma}\label{axisv2}
			Assume $\lambda>\frac{L^5}{\min_{x\in Q}|\mathfrak{h}_{gen}(x)|\sin \theta_{_{\mathcal{S}}}}$. Let $V$ be a connected subset of $\mathbb{R}^3$ with $m(V)>0$ and such that $\mathcal{Z}_{gen}^n(V)$ does not intersect any of the planes $x_1=\pm x_2+2k\lambda$, $k\in\mathbb{Z}$ for all $n\in\mathbb{N}$. Then $\mathcal{Z}_{gen}^n(V)$ visits infinitely often one of the two rectangle beams $B_{(0,0)}$, $B_{(0,-1)}$, that have the $x_3$-axis in their boundary.
		\end{lemma}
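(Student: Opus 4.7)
The plan is to adapt the proof of Lemma \ref{axis} essentially verbatim, substituting the generalized lemmas \ref{metrov2} and \ref{surfacelemmav2} for their spherical counterparts. The geometric setup is identical: the planes $x_1 = \pm x_2 + 2k\lambda$ partition $\mathbb{R}^3$ into rectangle beams on which $\mathcal{Z}_{gen}$ is a homeomorphism, and inside each such beam we have the stack of surfaces (pre-images of the boundaries of larger rectangle beams under one iterate) constructed at the beginning of Section 4 and whose generalized version is alluded to just before Lemma \ref{surfacelemmav2}.

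First I would set $V_i := \mathcal{Z}_{gen}^i(V)$ and invoke the hypothesis that no iterate crosses the invariant planes to conclude that each $V_i$ stays inside a single rectangle beam. A key observation is that $V_i$ also cannot cross any surface of the family $\mathcal{S}$ sitting inside that beam: if it did, then $V_{i+1} = \mathcal{Z}_{gen}(V_i)$ would straddle the boundary of some rectangle beam (since by construction these surfaces are pre-images of such boundaries), contradicting the previous step applied to $i+1$. Therefore each $V_i$ is trapped between two consecutive surfaces, or below the bottommost one, within its beam.

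Next I would argue by contradiction. Suppose there exists $N \in \mathbb{N}$ such that $V_i \notin B_{(0,0)} \cup B_{(0,-1)}$ for all $i > N$. Since these are precisely the beams touching the $x_3$-axis, it follows that for $i > N$ the set $V_i$ stays at uniformly positive distance from the $x_3$-axis, i.e.\ $V_i \cap C_a = \emptyset$ for some fixed $a>0$. The hypothesis $\lambda > L^5/(\min_{x \in Q}|\mathfrak{h}_{gen}(x)|\sin \theta_{_\mathcal{S}})$ is exactly what is needed to apply Lemma \ref{metrov2}, which yields $m(V_i) \to \infty$.

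Finally I would derive the contradiction using Lemma \ref{surfacelemmav2}. Since the volume enclosed between any two consecutive surfaces (as well as the volume enclosed by the lowest surface and the plane $x_3=0$, by the same monotonicity argument) is bounded by a constant $M_0$, the inflation $m(V_i) \to \infty$ forces $V_i$ eventually to lie below the bottommost surface of its rectangle beam for all $i > N_1 \geq N$. But being below that bottommost surface means the next image $V_{i+1}$ is mapped by $\mathcal{Z}_{gen}$ into a region that meets $B_{(0,0)} \cup B_{(0,-1)}$, contradicting our standing assumption. I expect no real obstacle: the only point requiring care is verifying that the surfaces $S_n, K_n$ defined in the generalized setting genuinely have the property that crossing them would force the next iterate to leave the beam, but this is built into their definition as pre-images of rectangle-beam boundaries, so it transfers without change.
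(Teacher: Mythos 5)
Your proposal is correct and matches the paper's intended approach: the paper explicitly omits this proof, stating it goes ``almost word for word'' with the proof of Lemma~\ref{axis}, and your argument is indeed a faithful transcription of that proof with Lemma~\ref{metrov2} and Lemma~\ref{surfacelemmav2} replacing their spherical counterparts. The one small extraneous remark is the parenthetical claim about the volume enclosed by the lowest surface and the plane $x_3=0$; this is true (it is $I_0$ from Lemma~\ref{surfacelemmav2}) but is not what the contradiction hinges on, since the region strictly below the lowest surface in a beam has infinite volume (it extends to $x_3\to-\infty$), and it is landing there that forces $V_{i+1}$ back into $B_{(0,0)}\cup B_{(0,-1)}$.
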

		
		\begin{lemma}\label{newlemmav2}
			Assume $\lambda>\frac{L^5}{\min_{x\in Q}|\mathfrak{h}_{gen}(x)|\sin \theta_{_{\mathcal{S}}}}$. Let $V$ be a connected set of $\mathbb{R}^3$ with $m(V)>0$ and such that $\mathcal{Z}_{gen}^n(V)$ does not intersect any of the planes $x_1=\pm x_2+2k\lambda$, $k\in\mathbb{Z}$	for all $n\in\mathbb{N}$. Suppose that there is an $N_0\in \mathbb{N}$ such that  $\mathcal{Z}_{gen}^n(V)\subset B_{(0,0)}$, for all $n>N_0$. Then for all $M>0$ and $\varepsilon>0$ there is some $n_0>N_0$ and a point $x\in \mathcal{Z}_{gen}^{n_0}(V)$ such that $p_3(x)>M$ and $d(x,x_3\text{-axis})<\varepsilon$, where $d$ is the euclidean distance.
		\end{lemma}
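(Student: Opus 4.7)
The plan is to mimic the proof of Lemma \ref{newlemma} almost verbatim, tracking where the spherical geometry of the original $\mathfrak{h}$ was used and substituting the corresponding properties of $\mathfrak{h}_{gen}$: namely $\mathfrak{h}_{gen}(0,0)=(0,0,1)$, $\sup_{x\in Q}|\mathfrak{h}_{gen}(x)|=1$ (so $|h_{gen,3}|\leq\lambda$), $\min_{x\in Q}|\mathfrak{h}_{gen}(x)|>0$, and the non-tangential position vector property with angle $\theta_{_{\mathcal{S}}}$. The splitting into cases is identical: either there exists $\varepsilon_0>0$ with $d(\mathcal{Z}_{gen}^n(V),x_3\text{-axis})>\varepsilon_0$ for all $n>N_0$, or else some subsequence of iterates approaches the $x_3$-axis.

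In the first case I would invoke Lemma \ref{metrov2} (which is the analogue of Lemma \ref{metro} using the revised determinant bound from Lemma \ref{detev2}) to conclude $m(\mathcal{Z}_{gen}^n(V))\to\infty$. Combined with Lemma \ref{surfacelemmav2}, which forces $\mathcal{Z}_{gen}^n(V)$ to lie below $S_0$ in the beam $B_{(0,0)}$, this volume blow-up forces the existence of iterates containing points $z_0$ with $p_3(z_0)$ arbitrarily negative. Pulling back two steps through the homeomorphic branch of $\mathcal{Z}_{gen}$ on $B_{(0,0)}$, exactly as in the original argument, and using $|h_{gen,3}|\leq\lambda$ in place of $|h_3|\leq\lambda$, yields a point $z^{(2)}\in\mathcal{Z}_{gen}^{n_0-2}(V)$ with $p_3(z^{(2)})$ arbitrarily large, while its first two coordinates are bounded in absolute value by $\lambda$. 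The contradiction follows by observing that below $S_0$, inside the pink initial beam $[-\lambda,\lambda]^2\times\mathbb{R}$, and outside a fixed cylinder $C_{\varepsilon_0}$, the third coordinate is bounded by compactness of the closure of the corresponding region.

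In the second case I would extract a sequence $w_k\in\bigcup_{n>N_0}\mathcal{Z}_{gen}^n(V)$ with $d(w_k,x_3\text{-axis})\to 0$. If $p_3(w_k)\to\infty$ the conclusion is immediate. If $p_3(w_k)\to -\infty$ then $\mathcal{Z}_{gen}(w_k)\to 0$ (since $|h_{gen}|$ is bounded), and subsequent iterates track the iterates of the origin, which by Proposition \ref{prop} (valid also for $\mathcal{Z}_{gen}$ since the $x_3$-axis is invariant and $\lambda>1/e$) escape to infinity along the $x_3$-axis. The remaining case, convergence along a subsequence to a point $(0,0,a)$ on the $x_3$-axis, is handled by choosing $N$ so that $E^N_{\lambda}(a)>M$ and invoking continuity of $\mathcal{Z}_{gen}^N$.

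The only place where any genuine care is needed is in the first case, where the argument that bounded first two coordinates together with position below $S_0$ and outside $C_{\varepsilon_0}$ forces a bounded third coordinate. The original proof implicitly used the continuity of $f$ defining $S_0$ away from the three singular points $(0,0)$, $(2\lambda,0)$, $(0,-2\lambda)$, and the same fact must be verified for the surface $S_0$ arising from $\mathcal{Z}_{gen}$; this is where the non-tangential position vector property and $\min_{x\in Q}|\mathfrak{h}_{gen}(x)|>0$ enter, guaranteeing that the inverse image of $\partial B_{(0,0)}$ inside $B_{(0,0)}$ is a graph over the plane $x_3=0$ whose only unbounded points lie above the three exceptional projections. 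Beyond this verification the proof is literally the same, which is why the author is content to omit it.
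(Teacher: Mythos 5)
Your proposal follows the same approach the paper intends: the paper explicitly omits this proof, stating that it goes "almost word for word" with Lemma \ref{newlemma}, and your substitutions (using $\sup_{x\in Q}|\mathfrak{h}_{gen}|=1$ for the bound $|h_{gen,3}|\leq\lambda$, $\mathfrak{h}_{gen}(0,0)=(0,0,1)$ for the invariance of the $x_3$-axis, and the non-tangential position vector property together with $\min_{x\in Q}|\mathfrak{h}_{gen}|>0$ to control $S_0$ away from its three singular projections) are exactly the right ones. You have also correctly isolated the one step where genuine verification is needed, so the proposal is sound.
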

		
		The next Lemma is the analogue of Lemma \ref{lipschitz} in this new setting.
		\begin{lemma}\label{lipschitzv2}
			Let $y_1,y_2\in B(0,r)$, where $r>0$. Then  for all $n\in \mathbb{N}$ it is true that \[|\mathcal{Z}_{gen}^n(y_1)-\mathcal{Z}_{gen}^n(y_2)|\leq \left(\frac{\sqrt{L^2+\lambda^2}}{\lambda}\right)^nE_{\lambda}(r)\cdots E_{\lambda}^n(r)|y_1-y_2|,\] where $E_{\lambda}$ denotes the exponential map $\lambda e^x$.
		\end{lemma}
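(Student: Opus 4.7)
The plan is to follow the template of Lemma \ref{lipschitz} essentially verbatim, with one crucial modification: in the spherical case the orthogonality of the position vector $C=h(p(x))$ to the tangent vectors $A,B$ allowed us to split $|u_1A+u_2B+u_3C|^2$ cleanly into two orthogonal pieces and obtain the bound $\max\{L,\lambda\}$. In the general case the surface $\mathcal{S}$ is not spherical, so this orthogonality fails. I would replace it by the triangle inequality combined with Cauchy--Schwarz, which naturally produces the factor $\sqrt{L^2+\lambda^2}$ appearing in the statement.

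First I would apply the version of the finite increment theorem used in Lemma \ref{lipschitz} (valid for locally Lipschitz maps via the fundamental theorem of calculus for the Lebesgue integral) along the segment $\gamma$ joining $y_1$ to $y_2$, which lies inside $B(0,r)$ since $B(0,r)$ is convex. This reduces the problem to estimating
\[
\esssup_{x\in B(0,r)} |D\mathcal{Z}_{gen}^n(x)|,
\]
and by the chain rule it suffices to bound each factor $|D\mathcal{Z}_{gen}(\mathcal{Z}_{gen}^k(x))|$ for $k=0,\dots,n-1$.

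Next I would compute the pointwise bound on $|D\mathcal{Z}_{gen}(x)|$. Writing $\mathcal{A},\mathcal{B},\mathcal{C}$ for the columns of the Jacobian at $(x_1,x_2,0)$ (as in the proof of Lemma \ref{detev2}), for a unit vector $u=(u_1,u_2,u_3)$ we have
\[
|D\mathcal{Z}_{gen}(x_1,x_2,0)(u)|=|u_1\mathcal{A}+u_2\mathcal{B}+u_3\mathcal{C}|\le |Dh_{gen}(p(x))(u_1,u_2)|+|u_3||\mathcal{C}|\le L\sqrt{u_1^2+u_2^2}+\lambda|u_3|,
\]
using that $h_{gen}$ is $L$-Lipschitz and that $|\mathcal{C}|=|h_{gen}(p(x))|\le\lambda$ (here one uses the normalisation $\sup|\mathfrak{h}_{gen}|=1$ from \eqref{eqgen} together with the scaling by $\lambda$). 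Applying Cauchy--Schwarz to the pair $(L,\lambda)$ and $(\sqrt{u_1^2+u_2^2},|u_3|)$ gives
\[
L\sqrt{u_1^2+u_2^2}+\lambda|u_3|\le \sqrt{L^2+\lambda^2}\cdot\sqrt{u_1^2+u_2^2+u_3^2}=\sqrt{L^2+\lambda^2},
\]
so $|D\mathcal{Z}_{gen}(x)|\le \sqrt{L^2+\lambda^2}\,e^{x_3}$ almost everywhere.

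Finally I would iterate exactly as in Lemma \ref{lipschitz}: for $x\in B(0,r)$ one checks inductively that $\lambda\sup_{x\in B(0,r)}e^{p_3(\mathcal{Z}_{gen}^k(x))}\le E_\lambda^{k+1}(r)$, which follows at once from $p_3(\mathcal{Z}_{gen}(x))=e^{x_3}h_{gen,3}(p(x))\le\lambda e^{x_3}$. Combining this with the chain-rule bound yields
\[
\esssup_{x\in B(0,r)}|D\mathcal{Z}_{gen}^n(x)|\le (L^2+\lambda^2)^{n/2}\prod_{k=0}^{n-1}\frac{E_\lambda^{k+1}(r)}{\lambda}=\left(\frac{\sqrt{L^2+\lambda^2}}{\lambda}\right)^{\!n}E_\lambda(r)\cdots E_\lambda^n(r),
\]
and multiplying by $|y_1-y_2|$ gives the stated inequality. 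The only real obstacle is the loss of orthogonality just discussed, and it costs exactly the replacement of $\max\{L,\lambda\}$ by $\sqrt{L^2+\lambda^2}$; everything else is a routine transcription of the proof of Lemma \ref{lipschitz}.
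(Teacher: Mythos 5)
Your proposal is correct and matches the paper's own proof essentially verbatim: both replace the lost orthogonality of $\mathcal{A},\mathcal{B},\mathcal{C}$ by a triangle-inequality split $|u_1\mathcal{A}+u_2\mathcal{B}+u_3\mathcal{C}|\leq L|(u_1,u_2)|+\lambda|u_3|$ followed by Cauchy--Schwarz, yielding $|D\mathcal{Z}_{gen}(x_1,x_2,0)|\leq\sqrt{L^2+\lambda^2}$, and then transcribe the iteration from Lemma \ref{lipschitz}. The invocation of the normalisations in \eqref{eqgen} at the end is exactly the point the paper flags as well.
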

		\begin{proof}
			The proof almost goes word for word with the proof of Lemma \ref{lipschitz}. Note however that $\mathcal{A}$, $\mathcal{B}$,  $\mathcal{C}$ are not orthogonal. Still when estimating $|D\mathcal{Z}_{gen}(x_1,x_2,0)|$ (see proof of Lemma \ref{lipschitz}) we can argue as follows 
			\begin{align*}
				|D\mathcal{Z}_{gen}(x_1,x_2,0)|^2&=\sup_{|v|=1}\left(|v_1\mathcal{A}+v_2\mathcal{B}+v_3\mathcal{C}|^2\right)\\&\leq\sup_{|v|=1}\left(|v_1\mathcal{A}+v_2\mathcal{B}|+|v_3\mathcal{C}|\right)^2\\&\leq\sup_{|v|=1}\left(L|(v_1,v_2)|+\lambda|v_3|\right)^2\\&\leq L^2+\lambda^2
			\end{align*}
			We also note that to argue here as in the last few lines of the proof of Lemma \ref{lipschitz} we use the two conditions in equation \eqref{eqgen}.
		\end{proof}
		\begin{proof}[Proof of Theorem \ref{pyramid}]
			Let $V$ be any open and connected set of $\mathbb{R}^3$. Assuming that $$\lambda>C_{h_{gen}}:=\frac{\max \{L^5,2 L\}}{\min_{x\in Q}|\mathfrak{h}_{gen}(x)|\sin \theta_{_{\mathcal{S}}}}$$ we want to show that $\mathcal{Z}^n_{gen}(V)$ intersects one of the planes that belong to the Julia set for some $n$ and thus $V$ itself intersects the Julia set.
			
			The proof now proceeds in the same way as the proof of Theorem \ref{main}. We consider the same two cases: \begin{enumerate}[label=(\roman*)]
				\item The iterates $\mathcal{Z}^n_{gen}(V)$ do not eventually stay inside the beam $B_{(0,0)}\cup B_{(0,-1)}$. In this case the proof is the same almost word for word.
				\item The iterates $\mathcal{Z}^n_{gen}(V)$ eventually stay inside $B_{(0,0)}\cup B_{(0,-1)}$. The idea in this case will be the same. We leave the details, which will be slightly different, to the interested reader.
			\end{enumerate}
		\end{proof}
		
		\section{Questions and Remarks}\label{remarks}
		As we have already seen the Zorich maps resemble in a lot of ways the exponential family. The literature on exponential dynamics is vast and there are many striking phenomena. It is expected that Zorich maps, given the higher dimensional setting and the greater flexibility,  should have an even more  intricate nature.  In this section we will mention some problems that require further study. 
		\subsection{Dynamics for different values of $\lambda$}
		We saw in Theorem \ref{main} that when $\lambda$ is large enough then the Julia set of the Zorich map is the entire $\mathbb{R}^3$ assuming that $\nu$ is large enough. It is interesting to ask what happens in the case when the scale factor $\lambda$ is not large. In that case we do not have enough expansion in the sense of Lemma \ref{dete} in order for our argument to work. Nonetheless, it seems that the dynamics in this case are also chaotic. So we ask \begin{question} Let $\lambda>0$. Does there always exist a constant $c_\lambda$ depending on $\lambda$ such that for all $\nu>c_\lambda$ the Julia set $\mathcal{J}(\mathcal{Z}_\nu)$ is the whole $\mathbb{R}^3$?
		\end{question}
		We can even ask this question in the complex plane. If we rescale the complex exponential  family we get the maps \[f_\nu(x+iy)=\nu\lambda e^x(\cos\left(\frac{y}{\lambda}\right)+i\sin\left(\frac{y}{\lambda}\right)),\] for $\lambda>0$ and $\nu\in\mathbb{R}$. Note that for $\lambda=1$ we get the exponential family. Of course those maps are no longer holomorphic but they are quasiregular and we can define their Julia set. A similar approach to that used for the Zorich maps should give us that the Julia set of those maps for $\lambda$ large enough  and $\nu>c'_\lambda$ is the entire complex plane,  where $c'_\lambda$ constant depending on $\lambda$. But we can ask \begin{question}Let $\lambda>0$.	Assuming that $\nu>c'_\lambda$ is $\mathcal{J}(f_\nu)$ the whole complex plane?
		\end{question}
		Closely related to the above question and worth mentioning here is the paper \cite{comduhr2019} where the authors study families of functions like $f_\nu$ in the complex plane. The functions they study are not necessarily quasiregular. However their results show that if we choose a $\lambda>0$ then for small values of $\nu$ the Julia set of $f_\nu$ is a 'Cantor bouquet'.   
		\subsection{Measurable dynamics of Zorich maps}
		In this subsection we assume that $\nu=1$ and $\lambda$ as in Theorem \ref{main}. We will make some remarks on the Lebesgue measure of some sets. First we need to introduce symbolic dynamics. In order to do that we partition $\mathbb{R}^3$ in the rectangular  beams \[T_{(i,j)}=T_{(0,0)}+2i(\lambda,\lambda,0)+2j(\lambda,-\lambda,0),\] where $i,j\in\mathbb{Z}$ and \begin{equation*}\begin{split}T_{(0,0)}=B_{(0,0)}\cup B_{(0,-1)}&\cup\{(x_1,x_2,x_3):x_1=x_2-2\lambda, -2\lambda< x_1\leq 0\}\\&\cup\{(x_1,x_2,x_3):x_1=-x_2+2\lambda, 0\leq x_1< 2\lambda\}\\&\cup\{(x_1,x_2,x_3):x_1=x_2, -\lambda< x_1< \lambda\}.\end{split}\end{equation*} For each point $x\in\mathbb{R}^3$ we associate a sequence on $\mathbb{Z}\times\mathbb{Z}$, $S(x):=(s_1,s_2,\dots)$ which we call its \textit{itinerary} and the $s_k=(s_{k,1},s_{k,2})$ are chosen such that $\mathcal{Z}^k(x)\in T_{s_k}$. We denote the space of all sequences by $\Sigma$ so $S$ is a map from $\mathbb{R}^3$ to $\Sigma$.  This procedure of course can be done to the exponential map in a similar manner. Consider now the set of all points with a given itinerary $(s_1,s_2,\dots)$. Namely the set \[\{x\in\mathbb{R}^3:S(x)=(s_1,s_2,\dots)\}.\] Ghys, Sullivan and Goldberg in \cite{Ghys1985} proved that the analogous set for the exponential map has Lebesgue measure zero. 
		With a bit more work we can see that in our proof of Theorem~\ref{main} we have actually proven the same result for the Zorich maps. Phrasing it in the same way as in \cite{Ghys1985} we have proven
		\begin{theorem}\label{fibers}
			The fibers of the map $S$ have Lebesgue measure zero.
		\end{theorem}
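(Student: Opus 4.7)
The plan is to re-run the proof of Theorem \ref{main} almost verbatim, paying attention to the fact that a fiber $F = S^{-1}(s)$ is typically neither open nor connected. The first thing to verify is that the change-of-variables formula applies to $F$. Since $\mathcal{Z}_\nu$ sends the two $B$-beams making up $T_{(i,j)}$ homeomorphically onto opposite half-spaces separated by the plane $x_1 = x_2$, it is injective on each $T_{(i,j)}$; iterating, $\mathcal{Z}_\nu^n$ is injective on $F$ because consecutive iterates lie in single $T$-beams. Therefore
\[
m(\mathcal{Z}_\nu^n(F)) \;=\; \int_F |\det D\mathcal{Z}_\nu^n|\, dm,
\]
and this is essentially the only topological input from $F$ that we actually use.

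Now suppose for contradiction that $m(F) > 0$. Split into two cases as in the proof of Theorem \ref{main}. If $s_n \neq (0,0)$ for infinitely many $n$, choose such a subsequence $n_j$. Then $\mathcal{Z}_\nu^{n_j}(F)$ lies in $T_{s_{n_j}}$, at uniform distance $a > 0$ from the $x_3$-axis, so Lemma \ref{dete} yields $|\det D\mathcal{Z}_\nu^{n_j}(x)| \geq (\lambda/L^5)^{n_j} a^3/\lambda^3$ a.e.\ on $F$. On the other hand, since $\mathcal{Z}_\nu^{n_j+1}(F) \subset T_{s_{n_j+1}}$, the image $\mathcal{Z}_\nu^{n_j}(F)$ is trapped between two consecutive level surfaces inside $T_{s_{n_j}}$ and hence has measure bounded by a constant $M_0$ independent of $n_j$ by Lemma \ref{surfacelemma}. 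Feeding these two inequalities into the change-of-variables formula and splitting $F$ into the part with $|p(\mathcal{Z}_\nu^{n_j}(x))| \geq \lambda/2$ versus its complement (the latter handled by considering the next iterate, exactly as in Case 1 of Theorem \ref{main}) forces $m(F) = 0$, a contradiction.

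In the second case the itinerary is eventually constant equal to $(0,0)$, so $\mathcal{Z}_\nu^n(F) \subset T_{(0,0)}$ for all $n \geq N_0$. Here the topological input of Lemma \ref{newlemma} must be replaced. I would pick a Lebesgue density point $x_0 \in F$ and a small ball $U = B(x_0, r)$ with $m(U \cap F) > \tfrac{1}{2} m(U)$. The orbit of $x_0$ remains in $T_{(0,0)}$, so Lemma \ref{upup} implies that either this orbit eventually exits the cylinder $C_\delta$ around the $x_3$-axis (reducing to the first case after relabelling) or $p_3(\mathcal{Z}_\nu^n(x_0)) \to \infty$. In the latter subcase, Lemma \ref{lipschitz} applied to $U$ controls the diameter of $\mathcal{Z}_\nu^n(U)$ in terms of the orbit of $x_0$, and a computation analogous to the one at the end of Theorem \ref{main}'s proof shows that $\mathcal{Z}_\nu^n(U)$ eventually becomes wider than a single $T$-beam. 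Since $U \cap F$ has density greater than $1/2$ in $U$, the image $\mathcal{Z}_\nu^n(U \cap F)$ cannot be contained in the one $T$-beam prescribed by the itinerary, contradiction.

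The main obstacle is this Lebesgue-density substitute for connectedness in Case 2: one must show quantitatively that a small ball centred at a density point of $F$ cannot be squeezed into a single $T$-beam after arbitrarily many iterations, given that its centre stays in the recurrent near-axis regime. This is precisely the measure-theoretic refinement of Lemma \ref{newlemma}, and is where the author's phrase "a bit more work" in the excerpt above acquires its content. Everything else in the proof is a faithful repetition of the expansion/trapping argument from Section \ref{section 4}, the only novelty being the use of the itinerary to label the relevant trapping regions and level surfaces at each step.
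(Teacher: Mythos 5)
Your Case~1 is essentially the paper's argument and is fine. But your Case~2 departs from the paper's route and has two concrete gaps.

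First, the dichotomy you draw from Lemma~\ref{upup} is not available. Lemma~\ref{upup}(b) only guarantees that an orbit off the $x_3$-axis leaves $C_\delta$ at \emph{some} time; it may re-enter $C_\delta$ afterwards, and indeed in the recurrent regime of Case~2 it does so repeatedly. Leaving $C_\delta$ while remaining inside $T_{(0,0)}$ does not put you in Case~1, which is about leaving $B_{(0,0)}\cup B_{(0,-1)}$ infinitely often. And $p_3(\mathcal{Z}_\nu^n(x_0))\to\infty$ would require the orbit to stay in $C_\delta$ for all time, which by \ref{upup}(b) forces $p(x_0)=(0,0)$, a measure-zero situation. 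So neither horn of your dichotomy covers the generic Case-2 behaviour.

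Second, Lemma~\ref{lipschitz} is an \emph{upper} bound on the distortion of distances (an $\esssup$ of $|D\mathcal{Z}_\nu^n|$); it bounds the diameter of $\mathcal{Z}_\nu^n(U)$ from above, not below. It therefore cannot be used to conclude that $\mathcal{Z}_\nu^n(U)$ "becomes wider than a single $T$-beam." A lower bound on diameter would need an expansion estimate on $\ell(D\mathcal{Z}_\nu^n)$, which the paper never proves and which in fact fails near the branch set where $\ell(D\mathcal{Z}_\nu)=0$. What the paper does have is a Jacobian lower bound (Lemma~\ref{dete}), which controls \emph{volume}, not diameter, and the contradiction in Case~2 is volume-based, not diameter-based.

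The paper's actual Case~2 is this: pick a Lebesgue density point $x\in V$; by Luzin's N property and local invertibility, density points are preserved along the orbit; using Lemma~\ref{newlemma} (with connectedness replaced by the same-itinerary hypothesis, as the paper observes) one can assume after renaming that $x\in A_1$; then set $U:=B(x,\varepsilon)\cap V\subset A_1$, which has positive measure, and run the \emph{same} shuttle argument as in Case~2 of Theorem~\ref{main}: $U$ passes through $A_2\cup A_3$ along a subsequence $n_j$, so $m(\mathcal{Z}_\nu^{n_j}(U))\to\infty$ by Lemma~\ref{metro}, contradicting $m(A_2\cup A_3)<\infty$. Connectedness of $U$ is not needed because the same-itinerary hypothesis already forces $\mathcal{Z}_\nu^k(U)$ to stay within the prescribed $T$-beam at every step. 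Your instinct to use density points is exactly right, but the conclusion to extract from them is a positive-measure subset of $V$ inside $A_1$ to feed into the trapping/expansion argument of Section~\ref{section 4}, not a diameter lower bound. A final minor omission: you should first discard the measure-zero set of points whose orbit hits one of the invariant planes $x_1=\pm x_2+2\lambda k$ (via Luzin's N), since the rest of the argument needs $V$ to stay in the open beams.
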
  
		\begin{proof}
			Let $V$ be a set with $m(V)>0$ and all points in $V$ have the same itinerary $s$. Remember that the planes $x_1=\pm x_2$ together with their parallel translates form a forward invariant set so any point in $V$ which lands on one of those planes stays on those planes. Those points will have zero Lebesgue measure since the planes have zero Lebesgue measure and quasiregular maps have Luzin's N property (see \cite[I.Proposition 4.14]{Rickman}). Hence, we can assume that $V$ does not contain such points and it always stays on the interior of the square beams under iteration. 
			Thus we find ourselves in the same two cases as in the proof of Theorem \ref{main}. Note here that  Lemmas \ref{metro}, \ref{axis} require the set $V$ to be connected. However, it is easy to see in their proofs that this hypothesis can be weakened to all points in $V$ have the same itinerary which is exactly what we have here.
			
			The first case now of Theorem \ref{main} is exactly the same. Assuming that points in $V$ have an itinerary in which we can find a subsequence $s_{n_k}$ with $s_{n_k}\not=(0,0)$ we arrive at a contradiction due to the fact that $m(V)>0$.
			
			On the second case we assume that the itinerary of points in $V$ is eventually $0$ and without losing generality in fact equal to $(0,0,\dots)$. We may assume that all points in $V$ are density points since by Lebesgue's density theorem this is true for almost all points. Thus if  $x\in V$  then we know that \[\frac{m(B(x,\varepsilon)\cap V)}{m(B(x,\varepsilon))}>0,\] for all $\varepsilon>0$.
			
			We now claim that $m\left(B(x,\varepsilon)\cap V\right)>0$, for all $\varepsilon>0$ small enough if and only if $m(B(\mathcal{Z}(x),\varepsilon)\cap \mathcal{Z}(V))>0$ for all $\varepsilon>0$ small enough. Indeed, this follows by Lusin's N property and the fact that the Zorich map is locally invertible in $\mathcal{Z}(V)$.
			
			This implies that all points in $\mathcal{Z}^n(V)$ have the property $m(B(y,\varepsilon)\cap \mathcal{Z}^n(V))>0$ for all $\varepsilon>0$ small enough. Hence, by Lemma \ref{newlemma}, we may assume  that $x$ lies in $A_1$ (otherwise just consider an iterate of $V$ and rename that as $V$) and fix a small $\varepsilon$ so that $$U:=B(x, \varepsilon)\cap V \subset A_1.$$ We can now repeat the argument in the proof of the second case of Theorem \ref{main} and conclude that there is a subsequence $n_j$ with $\mathcal{Z}^{n_j}(U)\subset A_2\cup A_3$ but $m(\mathcal{Z}^{n_j}(U))\to\infty$ which is a contradiction due to the fact that $m(A_2\cup A_3)<\infty$.
		\end{proof}
		Closely related is the question of the typical behaviour of an orbit of the exponential map. Lyubich in \cite{Lyubich1988} proved that for Lebesgue almost all points of the complex plane the limit set of their orbit $E^n(z)$ is the orbit of $0$, $\{E^n(0)\}_{n\in\mathbb{N}}$ plus $\infty$. Thus a typical point will follow closely the orbit of $0$ for some time and then "break off" for some iterates until it goes back to following the orbit of $0$ for more iterates this time. Hence, almost all points in the complex plane belong to the bungee set of the exponential map (see \cite{Osborne2016}), namely the set of points that neither escape to infinity nor remain bounded under iteration. The bungee set can be also defined for quasiregular maps (see \cite{Nicks2018}). So we ask 
		
		\begin{question}
			What is the typical behaviour of an orbit of a point $x\in\mathbb{R}^3$ under the Zorich map? Do almost all points belong to the bungee set?
		\end{question}
		Another interesting question that was answered by Lyubich in the same paper is that of ergodicity of the exponential. Ergodicity here means that there is no partition of the plane in two invariant sets of positive Lebesgue measure. We have that 
		\begin{theorem}[(Lyubich \cite{Lyubich1988})]
			$E(z)$ is not ergodic.
		\end{theorem}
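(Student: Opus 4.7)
The plan is to produce two disjoint completely $E$-invariant measurable subsets of $\mathbb{C}$, each of positive Lebesgue measure, whose union is $\mathbb{C}$ up to a null set. Following the symbolic-dynamics framework introduced just above, I would look for such sets as fibers of an itinerary map.

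Partition $\mathbb{C}$ into horizontal strips $S_k=\{z\in\mathbb{C}:(2k-1)\pi\leq \imag z<(2k+1)\pi\}$, $k\in\mathbb{Z}$, and define an itinerary map $s:\mathbb{C}\to\mathbb{Z}^{\mathbb{N}}$ by $s(z)=(s_n(z))_{n\geq 0}$, where $E^n(z)\in S_{s_n(z)}$. Since $s\circ E=\sigma\circ s$ for the left shift $\sigma$ on $\mathbb{Z}^{\mathbb{N}}$, the preimage $s^{-1}(B)$ of any shift-invariant Borel set $B\subset\mathbb{Z}^{\mathbb{N}}$ is $E$-invariant. Non-ergodicity will follow once I exhibit a shift-invariant $B$ such that both $s^{-1}(B)$ and $s^{-1}(\mathbb{Z}^{\mathbb{N}}\setminus B)$ have positive Lebesgue measure.

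The main input is Lyubich's shadowing theorem, recalled in the previous subsection: for a.e.\ $z$, the $\omega$-limit set under $E$ equals $\{E^n(0):n\geq 0\}\cup\{\infty\}$. Since the orbit of $0$ lies entirely in $S_0$, a typical itinerary consists of long stretches of zeros (whenever the orbit is close to $E^n(0)$) alternating with excursions during which nonzero symbols appear. A natural candidate for $B$ is then a tail event contrasting two qualitatively different regimes of these excursions, for instance
\[
B=\Bigl\{(s_n)_{n\geq 0}:\limsup_{N\to\infty}\tfrac{1}{N}\#\{n\leq N:s_n\neq 0\}>0\Bigr\},
\]
or the event that the sequence $(s_n)$ is unbounded. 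Both events are clearly $\sigma$-invariant.

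The crux, and what I expect to be the main obstacle, is showing that both $s^{-1}(B)$ and its complement carry positive Lebesgue measure. The shadowing theorem already supplies a large set of orbits whose itineraries are dominated by zeros, so one half of the dichotomy is plentiful; extracting a second positive-measure event requires exploiting the inverse-branch structure of $E$ on each strip $S_k$ (each $E|_{S_k}$ is a biholomorphism onto a half-plane with a well-controlled contracting inverse) together with a Koebe-type distortion estimate to promote the existence of explicit orbits with prescribed itineraries into a positive-measure lower bound. Controlling the distortion under the rapid expansion of $E$ and the absence of a bounded singular set is delicate, and is the technical heart of Lyubich's argument; it mirrors in spirit the careful volume estimates used in the proof of Theorem \ref{fibers} above, where the analogous ``essentially zero'' statement for Zorich fibers was established by combining the Jacobian lower bound of Lemma \ref{dete} with the finite-volume trapping regions of Lemma \ref{surfacelemma}.
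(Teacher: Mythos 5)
The theorem you are attempting to prove is not proved in the paper at all: it appears in Section~\ref{remarks} as a background result, stated verbatim and cited directly to Lyubich \cite{Lyubich1988}, purely to motivate the subsequent open question of whether the Zorich map is ergodic. There is therefore no proof in the paper against which your attempt can be compared, and no expectation that a proof be supplied here.

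As for the proposal itself, it is a strategy outline rather than a proof, and you say so candidly: the ``technical heart'' of producing a second invariant set of positive measure is left entirely open. The general framework (an itinerary map $s$ semiconjugating $E$ to the shift, so that $s^{-1}(B)$ is invariant for any shift-invariant $B$) is sound as a way of \emph{encoding} invariant sets, but neither of your two candidate events $B$ is accompanied by an argument that both $s^{-1}(B)$ and its complement have positive measure, and it is not at all clear that they do. For instance, Lyubich's shadowing theorem tells you that for a.e.\ $z$ the $\omega$-limit set is exactly $\overline{\{E^n(0)\}}\cup\{\infty\}$; since that set is the same for a.e.\ $z$, it gives no dichotomy by itself, and translating it into a statement about the density of nonzero symbols in the itinerary, or about unboundedness of the itinerary, is precisely the work that would need to be done. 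Appealing to ``Koebe-type distortion'' and to the inverse-branch contraction on strips does not resolve this: those tools control how an individual cylinder set sits inside the plane, but the non-ergodicity claim requires a global positive-measure dichotomy, which is a genuinely different kind of statement. In short, you have correctly identified what a proof \emph{would have to produce} (two disjoint invariant sets of positive measure), and you have a reasonable bookkeeping device for invariant sets, but the decisive step is absent, and the specific tail events you propose are not shown, and are not obviously able, to realise the dichotomy.
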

		In the same sense we can ask 
		\begin{question}
			Is the Zorich map $\mathcal{Z}$ ergodic?
		\end{question}
		\subsection{Indecomposable continua in Zorich maps}
		Another fascinating and well-studied phenomenon in exponential dynamics is the presence of indecomposable continua in the dynamic plane. It was Devaney in \cite{Devaney1993} who first studied such sets. The way to construct them in the complex plane is as follows. Consider the strip \[S=\{z\in\mathbb{C}:0\leq\imag z \leq \pi\}.\] Now take any $\kappa>1/e$ and consider the set \[\Lambda:=\{z\in\mathbb{C}:E_{\kappa}^n(z)\in S \hspace{1mm}\text{for all}\hspace{1mm}n\in\mathbb{N}\}.\] By suitably compactifing  this set then Devaney shows that we get a curve that accumulates everywhere on itself but does not separate the plane. Then by applying a theorem of Curry, \cite[Theorem 8]{Curry1991} he concludes that this curve must be an indecomposable continuum.\\
		
		Assuming that $\nu=1$, we can try and construct a similar set in the case of Zorich maps. The role of the strip $S$ is played now by the rectangular beam $\overline{B_{(0,0)}}$. Thus we can consider the set \[\Lambda_{\mathcal{Z}}:=\{x\in\mathbb{R}^3:\mathcal{Z}^n(x)\in \overline{B_{(0,0)}}, \hspace{1mm}\text{for all}\hspace{1mm}n\in\mathbb{N}\}.\] We can also, just like Devaney, suitably compactify this set and get a surface, let us call it $\Gamma$, that accumulates everywhere on itself. However the criterion of Curry is no longer available in this higher dimensional setting so Devaney's argument does not work here.
		\begin{question}
			Is $\Gamma$ an indecomposable continuum?
		\end{question}
		If the answer to the above question is yes we can then consider the same continua for different values of $\nu$. Let us call those continua $\Gamma_{\nu_1}$ and $\Gamma_{\nu_2}$, with $\nu_1$, $\nu_2\geq 1$.
		\begin{question}
			If $\nu_1\not=\nu_2$ are $\Gamma_{\nu_1}$ and $\Gamma_{\nu_2}$ homeomorphic?
		\end{question}

		Let us also remark here that the points in the set $\Lambda_{\mathcal{Z}}$ all have the same itinerary so by the results of the previous subsection we have that the three dimensional Lebesgue measure of this set is zero. 
		
		Finally, let us mention \cite{Devaney2002} where the authors prove the existence of many more indecomposable continua in the dynamical plane of the exponential map and ask many more questions. Such considerations also make sense for Zorich maps.
		
		\bibliographystyle{amsplain}
		\bibliography{bibliography}
	\end{document}